\newtheorem{thm}{Theorem}
\newtheorem{lemma}[thm]{Lemma}
\newtheorem{prop}[thm]{Proposition}
\newtheorem{cor}[thm]{Corollary}
\theoremstyle{definition}
\newtheorem{rem}[thm]{Remark}
\newtheorem{Def}[thm]{Definition}
\newtheorem{conv}[thm]{Convention}
\DeclareMathOperator{\Aut}{Aut}
\DeclareMathOperator{\End}{End}
\DeclareMathOperator{\Hom}{Hom}
\DeclareMathOperator{\id}{id}
\DeclareMathOperator{\im}{im}
\DeclareMathOperator{\Inn}{Inn}
\DeclareMathOperator{\tr}{tr}
\DeclareMathOperator{\Pic}{Pic}
\newcommand{\picc}[1]{\mathcal{P}ic(\mathcal{#1})}
\newcommand{\pic}[1]{\Pic(\mathcal{#1})}
\numberwithin{equation}{section}
\numberwithin{thm}{section}
\newif\if@fewtab\@fewtabtrue
\xdef\hourmin{\number\count255}
\xdef\hourmin{\hourmin:\ifnum\count255<10 0\fi\the\count255}}
\def\ps@draft{\let\@mkboth\@gobbletwo
    \def\@oddfoot{\hbox to 7 cm{\tiny \versionno
       \hfil}\hskip -7cm\hfil\rm\thepage \hfil {\tiny\draftdate}}
    \def\@oddhead{}
    \def\@evenhead{}\let\@evenfoot\@oddfoot}
\def\draftdate{\number\month/\number\day/\number\year\ \ \ \hourmin }
\def\bb            {{B\otimesA B}}
\def\be            {\begin{equation}}
\def\bearl         {\begin{array}{l}} 
\def\bearll        {\begin{array}{ll}}
\def\bearlll       {\begin{array}{lll}}
\def\C             {\mathcal{C}}
\def\CA            {\C_{\!A}}
\def\CAA           {\C_{\!A|A}}
\def\cir           {\,{\circ}\,}
\def\congTo        {\,{\stackrel\cong\to}\,} 
\def\D             {\mathcal{D}}
\def\dim           {\mathrm{dim}}
\def\dimf          {\dim_\Bbbk}
\def\dimL          {\dim_{\mathrm l}}
\def\dimR          {\dim_{\mathrm r}}
\def\dimLR         {\dim_{\mathrm l|r}}
\def\dsty          {\displaystyle }
\def\ee            {\end{equation}}
\def\eear          {\end{array}}
\def\Eins          {\mathbf{1}}    
\def\eq            {\,{=}\,}
\newcommand\erf[1] {\mbox{(\ref{#1})}}
\def\iN            {\,{\in}\,} 
\def\longcongTo    {\,{\stackrel\cong\longrightarrow}\,} 
\newcommand\MK[4]  {{#1}\,{\stackrel{{#3},{#4}}{\longleftrightarrow}}\,{#2}}
\def\op            {\text{op}}
\def\otA           {\,{\otimesA}\,}
\def\OtA           {{\otimesA}}
\def\otB           {\,{\otimesB}\,}
\def\otimesA       {{\otimes}_{\!A}^{}}
\def\otimesB       {{\otimes}_{\!B}^{}}
\def\oti           {\,{\otimes}\,}
\def\Oti           {{\otimes}}
\def\PsiA          {\Psi_{\!A}}
\def\rmd           {\mathrm d}
\def\To            {\,{\to}\,}
\def\trL           {\tr_{\mathrm l}}
\def\trR           {\tr_{\mathrm r}}
\def\vect          {{\mathcal V}\mbox{\sl ect}}
\def\vectk         {{\mathcal V}\mbox{\sl ect}_\Bbbk}
\def\Vee           {{}^{\vee\!}}
\newcommand\bild[1]{\includegraphics{#1}}
\def\bg         {\begin{gather}}
\def\eg         {\end{gather}}
\def\bp         {\begin{picture}}
\def\ep         {\end{picture}}
\renewcommand\epsilon{\varepsilon}\usepackage{rotating}
\begin{document}

\thispagestyle{empty}
\def\thefootnote{\fnsymbol{footnote}}
\begin{flushright}
   {\sf KCL-MTH-07-18}\\
   {\sf ZMP-HH/07-13}\\
   {\sf Hamburger$\;$Beitr\"age$\;$zur$\;$Mathematik$\;$Nr.$\;$294}
             \vskip 2em
\end{flushright}
\vskip 2.0em
\begin{center}\Large 
  ON THE ROSENBERG-ZELINSKY SEQUENCE\\ IN ABELIAN MONOIDAL CATEGORIES
\end{center}\vskip 1.5em
\begin{center}
  Till Barmeier\,$^{\,a,b}$,~
  ~J\"urgen Fuchs\,$^{\,c}$,~
  ~Ingo Runkel\,$^{\,b}$,~
  ~Christoph Schweigert\,$^{\,a}$\footnote{\scriptsize 
  ~Email addresses: \\
  $~$\hspace*{2.4em}barmeier@math.uni-hamburg.de, jfuchs@fuchs.tekn.kau.se, 
  ingo.runkel@kcl.ac.uk, schweigert@math.uni-hamburg.de}
\end{center}

\begin{center}\it$^a$
  Organisationseinheit Mathematik, \ Universit\"at Hamburg\\
  Schwerpunkt Algebra und Zahlentheorie\\
  Bundesstra\ss e 55, \ D\,--\,20\,146\, Hamburg
\end{center}
\begin{center}\it$^b$
  Department of Mathematics, King's College London \\
  Strand, London WC2R 2LS, United Kingdom  
\end{center}
\begin{center}\it$^c$
  Teoretisk fysik, \ Karlstads Universitet\\
  Universitetsgatan 5, \ S\,--\,651\,88\, Karlstad
\end{center}
\vskip 1.5em
\begin{center} December 2007 \end{center}
\vskip 2em
\begin{abstract} \noindent
We consider Frobenius algebras and their bimodules in certain abelian monoidal 
categories. In particular we study the Picard group of the category of bimodules 
over a Frobenius algebra, i.e.\ the group of isomorphism classes of invertible 
bimodules. The Rosenberg-Zelinsky sequence describes a homomorphism from the 
group of algebra automorphisms to the Picard group, which however is typically 
not surjective. We investigate under which conditions there exists a Morita 
equivalent Frobenius algebra for which the corresponding homomorphism is 
surjective. One motivation for our considerations is the orbifold 
construction in conformal field theory.
\end{abstract}

\setcounter{footnote}{0}
\def\thefootnote{\arabic{footnote}}
\newpage

\section{Introduction}

In the study of associative algebras it is often advantageous to collect algebras 
into a category whose morphisms are not algebra homomorphisms, but instead 
bimodules. One motivation for this is provided by the following observation. Let
$\Bbbk$ be a field and consider finite-dimensional unital associative 
$\Bbbk$-algebras. The condition on a $\Bbbk$-linear map to be an algebra morphism 
is obviously not linear. As a consequence the category of algebras and algebra 
homomorphisms has the unpleasant feature of not being additive.

On the other hand, instead of an algebra homomorphism $\varphi{:}~ A \To B$ 
one can equivalently consider the $B$-$A$-bimodule $B_\varphi$ which as a 
$\Bbbk$-vector space coincides with $B$ and whose left action is given by the 
multiplication of $B$ while the right action is application of $\varphi$ 
composed with multiplication in $B$. This is consistent with composition in 
the sense that given another algebra homomorphism $\psi{:}~ B \To C$ there is an 
isomorphism $C_\psi \,{\otimes_B}\, B_\varphi \,{\cong}\, C_{\psi\circ\varphi}$ 
of $C$-$A$-bimodules. It is then natural not to restrict one's attention to such 
special bimodules, but to allow all $B$-$A$-bimodules as morphisms from $A$ to 
$B$ \cite[sect.\,5.7]{bena}.
Of course, as bimodules come with their own morphisms, one then actually deals with 
the structure of a bicategory. The advantage is that the $1$-morphism category 
$A \To B$, i.e.\ the category of $B$-$A$-bimodules, is additive and even abelian. 

Taking bimodules as morphisms has further interesting consequences. First of all, 
the concept of isomorphy of two algebras $A$ and $B$ is now replaced by Morita 
equivalence, which requires the existence of an invertible $A$-$B$-bimodule. 
Indeed, in applications involving associative algebras one often finds that not only 
isomorphic but also Morita equivalent algebras can be used for a given purpose. 
The classical example is the equivalence of the category of left (or right) modules 
over Morita equivalent algebras. Another illustration is the Morita equivalence between 
invariant subalgebras and crossed products, see e.g.\ \cite{rief}. Examples in 
the realm of mathematical physics include the observations that matrix theories 
on Morita equivalent noncommutative tori are physically equivalent \cite{schw'13}, 
and that Morita equivalent symmetric special Frobenius algebras in modular tensor 
categories describe equivalent rational conformal field theories \cite{ffrs3,ffrs5}. 

As a second consequence, instead of the automorphism group $\Aut(A)$ one now 
deals with the invertible $A$-bimodules. The isomorphism classes of these particular
bimodules form the Picard group $\Pic(A\mbox{-}\mathrm{Bimod})$ of $A$-bimodules.
While Morita equivalent algebras may have different automorphism groups, the 
corresponding Picard groups are isomorphic. One finds that for any algebra $A$ the 
groups $\Aut(A)$ and $\Pic(A\mbox{-}\mathrm{Bimod})$ are related by the exact sequence
  \be
  0 \,\longrightarrow\, \Inn (A) \,\longrightarrow\, \Aut(A)
  \,\stackrel{\Psi_{\!A}}{\longrightarrow}\, \Pic(A\mbox{-}\mathrm{Bimod}) \,,
  \label{RosZe}
  \ee
which is a variant of the Rosenberg-Zelinsky \cite{roZe,KNoj} sequence. Here 
$\Inn(A)$ denotes the inner automorphisms of $A$, and the group homomorphism 
$\Psi_{\!A}$ is given by assigning to an automorphism 
$\omega$ of $A$ the bimodule $A_\omega$ obtained
from $A$ by twisting the right action of $A$ on itself by $\omega$.
In other words, $\Pic(A\mbox{-}\mathrm{Bimod})$ is the home for the 
obstruction to a Skolem-Noether theorem.

It should be noticed that the group homomorphism $\Psi_{\!A}$ in 
\erf{RosZe} is not necessarily a surjection. But for practical purposes
in concrete applications it can be of interest to have an explicit realisation
of the Picard group in terms of automorphisms of the algebra available. 
This leads naturally to the following questions:
\def\leftmargini{1.1em}
\begin{itemize}
\item 
Does there exist another algebra $A'$, Morita equivalent to $A$, such that
the group homomorphism 
$\Psi_{\!A'}{:}~ \Aut(A') \To \Pic(A'\mbox{-}\mathrm{Bimod})$ 
in \erf{RosZe} is surjective?
\item 
And, once such an algebra $A'$ has been constructed: Does
this surjection admit a section, 
i.e.\ can the group $\Pic(A\mbox{-}\mathrm{Bimod})$ be identified with a 
subgroup of the automorphism group of the Morita equivalent algebra $A'$?
\end{itemize}

We will investigate these questions in a more general setting, namely
we consider algebras in $\Bbbk$-linear monoidal categories more general than
the one of $\Bbbk$-vector spaces. Like many other
results valid for vector spaces,
also the sequence \erf{RosZe} continues to hold in this setting, see
\cite[prop.\,3.14]{vazh} and \cite[prop.\,7]{fuRs11}. 

We start in section \ref{sec:alg-cat} by collecting some aspects of algebras and 
Morita equivalence in monoidal categories and review the definition of invertible 
objects and of the Picard category. Section \ref{sec:fix-alg} collects information 
about fixed algebras under some subgroup of algebra automorphisms. In section 
\ref{sec:1-class} we answer the questions raised above for the special case that 
the algebra $A$ is the tensor unit of the monoidal category $\D$ under 
consideration. As recalled in section \ref{sec:alg-cat}, the categorical dimension 
provides a character on the Picard group with values in  $\Bbbk^\times$. The main 
result of section \ref{sec:1-class}, Proposition \ref{mengenschnitt}, supplies, 
for any finite subgroup $H$ of the Picard group on which this character is 
trivial, an algebra $A'$ that is Morita equivalent to the tensor unit such that 
the elements of $H$ can be identified with automorphisms of $A$. Theorem 
\ref{gruppenschnitt}, in turn, gives a characterisation of group homomorphisms
$H \To \Aut(A)$ in terms of cochains on $H$. In this case the subgroup $H$ is not 
only required to have trivial character, but in addition a three-cocycle on 
$\Pic(A\mbox{-}\mathrm{Bimod})$ must be trivial when restricted to $H$. The 
relevant three-cocycle is obtained from the associativity constraint of 
$\mathcal{D}$, see eq.\ \erf{defkozykel} below. We also 
compute the fixed algebra under the corresponding subgroup of automorphisms.
In section \ref{sec:gen-class} these results are generalised to algebras
not necessarily Morita equivalent to the tensor unit, providing an affirmative 
answer to the above questions also in the general case. However, similar to the 
$A\eq\Eins$ case, one needs to restrict oneself to a finite subgroup $H$ of
$\Pic(A\mbox{-}\mathrm{Bimod})$ such that the corresponding invertible bimodules
have categorical dimension equal to $1$ in $A\mbox{-}\mathrm{Bimod}$ and for 
which the associativity constraint of $A\mbox{-}\mathrm{Bimod}$ is trivial. This 
is stated in Theorem \ref{thm:main}, which is the main result of this paper.

\medskip

Let us also briefly mention a motivation of our considerations which comes from 
conformal field theory. A consistent rational conformal field theory (on oriented 
surfaces with possibly non-empty boundary) 
is determined by a module category $\mathcal{M}$ over a modular tensor category $\C$
\cite{TFT1}. The Picard group of the category of module endofunctors of
$\mathcal{M}$ describes the symmetries of this CFT \cite{ffrs5}. The explicit
construction of this CFT requires not just the abstract module
category, but rather a concrete realization as category of modules over a Frobenius
algebra $A$, as this provides a natural forgetful functor from $\mathcal{M}$
to $\C$ which enters crucially in the construction. The module endofunctors
are realised as the category of $A$-$A$-bimodules. For practical purposes it 
can be useful to choose the algebra $A$ such that a given subgroup $H$ of the 
symmetries $\Pic(A\mbox{-}\mathrm{Bimod})$ of the CFT is realised 
as automorphisms of $A$. Theorem \ref{thm:main} provides us with conditions 
for when such a representative exists. Finally, the fixed algebra under this 
subgroup of automorphisms is related to the CFT obtained by `orbifolding' 
the original CFT by the symmetry $H$.

\bigskip

\noindent{\sc Acknowledgements:}
TB is supported by the European Superstring Theory Network (MCFH-2004-512194) 
and thanks King's College London for hospitality.
JF is partially supported by VR under project no.\ 621-2006-3343.
IR is partially supported by the EPSRC First Grant EP/E005047/1, the PPARC 
rolling grant PP/C507145/1 and the Marie Curie network `Superstring Theory' 
(MRTN-CT-2004-512194). 
CS is partially supported by the Collaborative Research Centre 676 ``Particles, 
Strings and the Early Universe - the Structure of Matter and Space-Time''.


\section{Algebras in monoidal categories}\label{sec:alg-cat}

In this section we collect information about a few basic structures that 
will be needed below. Let $\D$ be an abelian category enriched over the category 
$\vectk$ of finite-dimensional vector spaces over a field $\Bbbk$.
An object $X$ of $\D$ is called simple iff it has no proper subobjects.
An endomorphism of a simple object $X$ is either zero or an isomorphism 
(Schur's lemma), and hence the endomorphism space $\Hom(X,X)$ is a finite-dimensional 
division algebra over $\Bbbk$. An object $X$ of $\D$ is called absolutely simple 
iff $\Hom (X,X) \eq \Bbbk \id_X$. If $\Bbbk$ is algebraically closed, then every 
simple object is absolutely simple; the converse holds e.g.\ if $\D$ is semisimple.

When $\D$ is monoidal, then without loss of generality we assume it to be strict. 
More specifically, for the rest of this paper we make the following assumption.

\begin{conv}\label{convention1}
$(\mathcal{D}, \otimes , \Eins)$ is an abelian strict monoidal category with simple 
and absolutely simple tensor unit $\Eins$, and enriched over $\vectk$ for a
field $\Bbbk$ of characteristic zero.
\end{conv}

\noindent
In particular, $\Hom(\Eins,\Eins) \eq \Bbbk\id_\Eins$, which we identify with $\Bbbk$.

\begin{Def}
A {\em right duality\/} on $\D$ assigns to each object $X$ of $\D$ an 
object $X^\vee$, called the right dual object of $X$, and morphisms
$b_X\iN\Hom (\Eins, X\oti X^\vee)$ and $d_X\iN\Hom (X^\vee\oti X, \Eins )$ such that
  \be
  (\id_X\oti d_X)\cir (b_X\oti \id_X) =\id_X
  \qquad{\rm and}\qquad
  (d_X\oti\id_{X^\vee})\cir (\id_{X^\vee}\oti  b_X) =\id_{X^\vee} \,.
  \ee
A {\em left duality\/} on $\D$ assigns to each object $X$ of $\D$ a left dual 
object $\Vee X$ together with morphisms $\tilde b_X\iN \Hom (\Eins, \Vee X\oti X)$ and 
$\tilde d_X\iN \Hom (X\oti \Vee X, \Eins )$ such that
  \be
  (\tilde d_X\oti \id_X)\cir (\id_X\oti \tilde b_X) =\id_X
  \qquad{\rm and}\qquad
  (\id_{{}^{\vee\!} X}{\otimes}\, \tilde d_X)\cir (\tilde b_X\oti \id_{\Vee X} )
  =\id_{\Vee X} \,.
  \ee
\end{Def}

Note that $\Eins^\vee \,{\overset{\cong}{\rightarrow}}\, \Eins^\vee \oti 
\Eins \,{\overset{d_\Eins}{\rightarrow}}\, \Eins$ is nonzero; since by assumption
$\Eins$ is simple, we thus have $\Eins^\vee \,{\cong}\, \Eins$. In the same way one 
sees that $\Vee \Eins \,{\cong}\, \Eins$. Further, given a right duality, the right 
dual morphism to a morphism $f\iN\Hom (X,Y)$ is the morphism
  \be
  f^\vee :=(d_Y\oti \id_{X^\vee})\cir (\id_{Y^\vee}\Oti\, f\oti \id_{X^\vee})\cir 
  (\id_{Y^\vee}\Oti\, b_X) ~\iN \Hom (Y^\vee, X^\vee) \,.
  \ee
Left dual morphisms are defined analogously.
Hereby each duality furnishes a functor from $\D$ to $\D^\op$. 
Further, the objects $(X\oti Y)^\vee$ and $Y^\vee \oti X^\vee$ are isomorphic.

\begin{Def}
A {\em sovereign\/}\,%
   \footnote{~What we call sovereign is sometimes referred to as {\em strictly sovereign\/},
   compare \cite{bich2,brug6}.}
category is a monoidal category that is equipped with 
a left and a right duality which coincide as functors, i.e.\ $X^\vee \eq \Vee X$ 
for every object $X$ and $f^\vee \eq \Vee f$ for every morphism $f$.
\end{Def}

In a sovereign category the {\em left\/} and {\em right traces\/} of an endomorphism 
$f\iN \Hom (X,X)$ are the scalars (remember that we identify $\End(\Eins)$ with $\Bbbk$)
  \be
  \trL (f):=d_X\cir (\id_{X^\vee}\Oti\, f)\cir \tilde b_X
  \qquad{\rm and}\qquad
  \trR (f):=\tilde d_X\cir (f\oti\id_{X^\vee}) \cir b_X \,,
  \ee
respectively, and the left and right {\em dimensions\/} of an object $X$ are the scalars
  \be
  \dimL (X):=\trL (\id_X)\,, \qquad \dimR (X):=\trR (\id_X) \,.
  \ee
Both traces are cyclic, and dimensions are constant on isomorphism classes,
multiplicative under the tensor product and additive under direct sums. 
Further, one has $\trL (f) \eq \trR (f^\vee)$, and using the fact that in a 
sovereign category each object $X$ is isomorphic to its double dual $X^{\vee\vee}$
it follows that
the right dimension of the dual object equals the left dimension of the object itself,
  \be\label{diml-dimr}
  \dimL (X) = \dimR (X^\vee) \,,
  \ee
and vice versa. In particular, any object that is isomorphic to its dual, 
$X\,{\cong}\, X^\vee$, has equal left and right dimension, which we then denote by 
$\dim(X)$.
The tensor unit $\Eins$ is isomorphic to its dual and has dimension $\dim(\Eins)\eq1$.

Next we collect some information about algebra objects in monoidal categories.
Recall that a (unital, associative) \textit{algebra\/} in $\mathcal{D}$ is a triple 
$(A, m, \eta )$ consisting of an object $A$ of $\D$ and morphisms
$m\iN\Hom (A\oti A, A)$ and $\eta\iN \Hom (\Eins , A)$, such that
  \be
  m\cir (\id_A\oti m)=m\cir (m\oti \id_A)
  \qquad{\rm and}\qquad
  m\cir (\id_A\oti \eta )=\id_A=m\cir (\eta\oti \id_A) \,.
  \ee
Dually, a (counital, coassociative) \textit{coalgebra\/} is a triple 
$(C, \Delta, \epsilon)$ with $C$ an object of $\D$ and morphisms 
$\Delta\iN \Hom(C, C\oti C)$ and $\epsilon \iN \Hom (C, \Eins)$, such that
  \be
  (\Delta \oti\id_C)\cir \Delta =(\id_C\oti\Delta)\cir\Delta
  \qquad{\rm and}\qquad
  (\id_C\oti \epsilon )\cir\Delta =\id_C=(\epsilon\oti \id_C)\cir \Delta \,.
  \ee
The following concepts are also well known, see e.g.\ \cite{muge8,TFT1}.

\begin{Def}\label{algebrendef}
~\\[-1.6em]
\def\leftmargini{2.1em}
\begin{itemize}
\item[(i)]
A \textit{Frobenius algebra\/} in $\D$ is a quintuple $(A, m, \eta, \Delta, \epsilon)$, 
such that $(A, m, \eta )$ is an algebra in $\D$, $(A, \Delta, \epsilon)$ is a 
coalgebra and the compatibility relation
  \be \label{frobeigenschaft}
  (\id_A\oti m)\cir (\Delta\oti \id_A)=\Delta\cir m=(m\oti \id_A)\cir (\id_A\oti \Delta)
  \ee
between the algebra and coalgebra structures is satisfied.
\item[(ii)] 
A Frobenius algebra $A$ is called \textit{special\/} iff $m\cir \Delta \eq \beta_A\id_A$ and 
$\epsilon\cir \eta \eq \beta_\Eins \id_\Eins $ with $\beta_\Eins, \beta_A\iN \Bbbk^\times$. 
$A$ is called \textit{normalised special\/} iff $A$ is special with $\beta_A \eq 1$.
\item[(iii)]
\label{defsymm} 
If $\D$ is in addition sovereign, an algebra $A$ in $\D$ is
called \textit{symmetric\/} iff the two morphisms
  \be\label{Phi1Phi2}
  \Phi_1:=((\epsilon\cir m)\oti \id_{A^\vee})\cir (\id_A\oti b_A)
  \quad~{\rm and}\quad~
  \Phi_2:=(\id_{A^\vee}\Oti\, (\epsilon\cir m))\cir (\tilde b_A\oti \id_A)
  \ee
in $\Hom (A, A^\vee )$ are equal.
\end{itemize}
\end{Def}

For $(A, m_A, \eta_A)$ and $(B, m_B, \eta_B)$ algebras in $\D$, a morphism $f{:}~ A\To B$
is called a (unital) morphism of algebras iff $f\cir m_A \eq m_B\cir (f\oti f)$ 
in $\Hom (A\oti A, B)$ and $f\cir \eta_A \eq \eta_B$. Similarly one defines (counital)
morphisms of coalgebras and morphisms of Frobenius algebras.
An algebra $S$ is called a \textit{subalgebra\/} of $A$ iff there is a monic 
$i{:}~ S\To A$ that is a morphism of algebras.

A (unital) {\em left\/} $A$-{\em module\/} is a pair $(M, \rho)$ consisting of an object 
$M$ in $\D$ and a morphism $\rho\iN \Hom (A\oti M, M)$, such that
  \be
  \rho\cir (\id_A\oti\rho) = \rho\cir (m\oti\id_M) \qquad{\rm and}\qquad
  \rho\cir (\eta\oti \id_M) = \id_M .
  \ee
Similarly one defines right $A$-modules. An $A$-$A$-{\em bimodule\/} 
(or $A$-{\em bimodule\/} for short) is a triple 
$(M, \rho,\varrho)$ such that $(M, \rho)$ is a left $A$-module, $(M, \varrho)$ a
right $A$-module, and the left and right actions of $A$ on $M$ commute.
Analogously, $A$-$B$-bimodules carry a left action of the algebra $A$ and a
commuting right action of the algebra $B$.

For $(M,\rho_M)$ and $(N,\rho_N)$ left $A$-modules, a morphism $f\iN\Hom (M,N)$ 
is said to be a morphism of left $A$-modules (or briefly, a module morphism) iff
$f\cir \rho_M \eq \rho_N\cir (\id_A\oti f)$. Analogously one defines morphisms of
$A$-$B$-bimodules. Thereby one obtains a category, with objects the $A$-$B$-bi\-modules 
and morphisms the $A$-$B$-bimodule morphisms. We denote this category by 
$\D_{\!A|B}$ and the set of bimodule morphisms from $M$ to $N$ by $\Hom_{A|B}(M,N)$. 
The Frobenius property \erf{frobeigenschaft} means that the coproduct $\Delta$ 
is a morphism of $A$-bimodules.

\begin{Def}
An algebra is called \textit{(absolutely) simple} iff it is 
(absolutely) simple as a bimodule over itself. Thus
$A$ is absolutely simple iff $\Hom_{A|A}(A,A) \eq \Bbbk\id_A$.
\end{Def}

\begin{rem}
Since $\D$ is abelian, one can define a tensor product of $A$-bimodules.
This turns the bimodule category $\D_{\!A|A}$ into a monoidal category. 
For example, $\D\,{\cong}\, \D_{\!\Eins|\Eins}$ as monoidal categories.
See the appendix for more details on this and especially on tensor products 
over special Frobenius algebras.
\end{rem}

\begin{rem}\label{frobdim}
If $A$ is a (not necessarily symmetric) Frobenius algebra in a sovereign 
category, then the morphisms $\Phi_1$ and $\Phi_2$ in \erf{Phi1Phi2}
are invertible, with inverses
  \be
  \Phi_1^{-1}=(d_A\oti \id_A)\cir (\id_{A^\vee}\Oti\, (\Delta\cir\eta))
  \qquad{\rm and}\qquad
  \Phi_2^{-1}=(\id_A\oti \tilde d_A)\cir ((\Delta\cir\eta )\oti\id_{A^\vee}) \,,
  \ee
respectively.
So if $A$ is Frobenius, $A$ and $A^\vee$ are isomorphic, hence the left and
right dimension of $A$ are equal. Accordingly we will write $\dim (A)$ for 
the dimension of a Frobenius algebra in the sequel.
\\[3pt]
Further one can show (see \cite{TFT1}, section 3) that for any symmetric 
special Frobenius algebra $A$ the relation $\beta_A\,\beta_\Eins \eq \dim(A)$ holds.
In particular, $\dim(A) \,{\neq}\, 0$. Furthermore, without loss of generality one 
can assume that the coproduct is normalised such that $\beta_\Eins \eq \dim (A)$ 
and $\beta_A \eq 1$, i.e.\ $A$ is normalised special. 
\end{rem}

\begin{lemma}
Let $(A, m, \eta)$ be an algebra with $\dimf\Hom (\Eins , A) \eq 1$. 
Then $A$ is an absolutely simple algebra.
\end{lemma}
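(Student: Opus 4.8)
The goal is to show that any algebra $A$ with $\dimf\Hom(\Eins,A)\eq1$ is absolutely simple, i.e.\ that $\Hom_{A|A}(A,A)\eq\Bbbk\id_A$. The plan is to exhibit a $\Bbbk$-linear isomorphism between the bimodule endomorphism space $\Hom_{A|A}(A,A)$ and the much smaller space $\Hom(\Eins,A)$, which by hypothesis is one-dimensional; since $\id_A$ is manifestly a nonzero element of $\Hom_{A|A}(A,A)$, dimension $1$ forces every bimodule endomorphism to be a scalar multiple of $\id_A$.

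First I would set up the two maps that should be mutually inverse. Given a bimodule morphism $f\iN\Hom_{A|A}(A,A)$, precompose with the unit to obtain $f\cir\eta\iN\Hom(\Eins,A)$; this defines a $\Bbbk$-linear map $F{:}~\Hom_{A|A}(A,A)\To\Hom(\Eins,A)$. In the other direction, given $g\iN\Hom(\Eins,A)$, form the morphism
  \be
  G(g) := m\cir(\id_A\oti g) \,.
  \ee
The key routine checks are that $G(g)$ is always a bimodule morphism and that $F$ and $G$ are inverse to each other. That $G(g)$ is a morphism of \emph{right} $A$-modules is immediate from associativity of $m$, while compatibility with the \emph{left} action uses associativity again together with the fact that $g$ maps out of the tensor unit, so that $\id_A\oti g$ commutes past the left multiplication; these are the defining bimodule conditions.

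For the inverse relations, one computes $F(G(g)) \eq m\cir(\id_A\oti g)\cir\eta \eq m\cir(\eta\oti g)$, which by the unit axiom collapses to $g$ (identifying $\Eins\oti\Eins$ with $\Eins$). Conversely, for a bimodule endomorphism $f$ one must verify $G(F(f)) \eq f$, i.e.\ $m\cir(\id_A\oti(f\cir\eta)) \eq f$. Here I would use that $f$ is a morphism of \emph{left} $A$-modules, rewriting $m\cir(\id_A\oti(f\cir\eta))$ as $f\cir m\cir(\id_A\oti\eta)$ and then applying the unit axiom $m\cir(\id_A\oti\eta)\eq\id_A$ to recover $f$.

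The main obstacle is therefore not any single deep idea but getting the module-morphism bookkeeping exactly right in the two directions: one direction of the inverse check leans on the right-module structure of $A$ and the other on the left-module structure, and one must be careful that the morphism $g$ emanating from the tensor unit genuinely slides through the multiplication as claimed. Once the bijectivity of $F$ is established, the hypothesis $\dimf\Hom(\Eins,A)\eq1$ gives $\dimf\Hom_{A|A}(A,A)\eq1$, and since $\id_A$ is a nonzero bimodule endomorphism we conclude $\Hom_{A|A}(A,A)\eq\Bbbk\id_A$, which is precisely absolute simplicity of $A$.
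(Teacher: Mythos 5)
Your overall strategy --- comparing $\Hom_{A|A}(A,A)$ with $\Hom(\Eins,A)$ via $F(f) \,{:=}\, f\cir\eta$ --- is sound, but one of your ``routine checks'' is false, and it is exactly the one your claimed bijection rests on. For $g\iN\Hom(\Eins,A)$ the morphism $G(g) \eq m\cir(\id_A\oti g)$ is right multiplication by $g$. By associativity and the interchange law it is a morphism of \emph{left} $A$-modules, but it is a morphism of \emph{right} $A$-modules only if $g$ is central, i.e.\ only if $m\cir(g\oti\id_A) \eq m\cir(\id_A\oti g)$: writing out the right-module condition, one side reduces to $m\cir(\id_A\oti(m\cir(\id_A\oti g)))$ and the other to $m\cir(\id_A\oti(m\cir(g\oti\id_A)))$, and these differ for non-central $g$. (You also have the two sides swapped: the check you call ``immediate from associativity'' is the right-module one, which is the one that fails; your argument for the left action is the correct one.) Since you make this claim for an arbitrary algebra, before the hypothesis enters, it is refuted by $A \eq M_2(\Bbbk)$ in $\vectk$: there $\dimf\Hom_{A|A}(A,A) \eq 1$ while $\dimf\Hom(\Eins,A) \eq 4$, so no such bijection can exist, and indeed $G(g)$ for non-central $g$ is not a bimodule endomorphism. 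Consequently $G$ does not land in $\Hom_{A|A}(A,A)$, $F$ need not be surjective in general, and bijectivity of $F$ --- the pivot of your final paragraph --- is not established. (Your computation $F(G(g)) \eq g$ is fine; the problem is solely where $G$ takes values.)

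The repair is easy, because only half of your argument is actually needed. Your verification of $G(F(f)) \eq f$ uses only that $f$ is a morphism of \emph{left} $A$-modules, so $F$ is injective on $\Hom_A(A,A)$ and \emph{a fortiori} on the subspace $\Hom_{A|A}(A,A)$. Hence $\dimf\Hom_{A|A}(A,A) \,{\le}\, \dimf\Hom(\Eins,A) \eq 1$, and since $\id_A$ is a nonzero bimodule endomorphism, $\Hom_{A|A}(A,A) \eq \Bbbk\id_A$, which is absolute simplicity; surjectivity of $F$ is never needed (under the hypothesis it does hold, but only trivially, since $\Hom(\Eins,A) \eq \Bbbk\,\eta$ and $\eta \eq F(\id_A)$). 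This corrected argument is essentially the paper's proof, which invokes the isomorphism $\Hom(\Eins,A) \,{\cong}\, \Hom_A(A,A)$ (Proposition 4.7 of \cite{fuSc16}) --- the left-module statement your maps $F$ and $G$ actually prove --- together with the chain $1 \,{\le}\, \dimf\Hom_{A|A}(A,A) \,{\le}\, \dimf\Hom_A(A,A)$.
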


\begin{proof}
By Proposition 4.7 of \cite{fuSc16} one has $\Hom(\Eins,A) \,{\cong}\, \Hom_A(A,A)$. The 
result thus follows from $1 \,{\le}\, \dimf\Hom_{A|A}(A,A) \,{\le}\, \dimf \Hom_A(A,A)$. 
\end{proof}

\begin{rem}\label{algbeispiel}
Obviously the tensor unit $\Eins$ is a symmetric special Frobenius algebra.
One also easily verifies that for any object $X$ in a sovereign category 
the object $X\oti X^\vee$ with structural morphisms
  \be
  m:=\id_X\oti d_X\oti\id_{X^\vee} \,,\quad~ \eta:=b_X \,,\quad~
  \Delta:=\id_X\oti \tilde b_X \oti \id_{X^\vee} \,, \quad~ \epsilon:=\tilde d_X
  \ee
provides an example of a symmetric Frobenius algebra. If the object $X$ has nonzero 
left and right dimensions, then this algebra is also special, with
  \be
  \beta_{X\oti X^\vee}=\dimL (X) \,,\qquad \beta_\Eins =\dimR (X) \,.
  \ee
The object $X$ is naturally a left module over $X\oti X^\vee$, with representation
morphism $\rho \eq \id_X\oti d_X$, while the object
$X^\vee$ is a right module over $X\oti X^\vee$ with $\varrho \eq d_X\oti \id_{X^\vee}$.
\end{rem}

Next we recall the concept of Morita equivalence of algebras (for details see 
e.g.\ \cite{Par,vazh}).

\begin{Def}
A {\em Morita context\/} in $\D$ is a sextuple $(A, B, P, Q, f, g)$, where $A$ 
and $B$ are algebras in $\D$, $P \,{\equiv}\, {}_AP_B$ is an $A$-$B$-bimodule 
and $Q \,{\equiv}\, {}_BQ_A$ is a $B$-$A$-bimodule, such that $f{:}~ 
P\otB Q\congTo A$ and $g{:}~ Q\otA P\congTo B$ are isomorphisms of $A$- and 
$B$-bimodules, respectively, and the two diagrams
  \be\label{moritadiagramme}
\raisebox{3.5em}{
\xymatrix@C=3em{
(P\otimesB Q) \otimesA P\ar[r]^/.7em/{f\oti\id}\ar[d]_{\cong} & A\otimesA P\ar[dd]^\cong 
\\
P\otimesB (Q\otimesA P)\ar[d]_{\id\oti g} &
\\
P\otimesB B\ar[r]_\cong &P
}
\qquad \qquad
\xymatrix@C=3em{
(Q\otimesA P)\otimesB Q\ar[r]^/.7em/{g\oti\id}\ar[d]_{\cong} & B\otimesB Q\ar[dd]^\cong 
\\
Q\otimesA (P\otimesB Q)\ar[d]_{\id\oti f} &
\\
Q\otimesA A\ar[r]_\cong &Q
}}
  \ee
commute. 
\\
If such a Morita context exists, we call the algebras $A$ and $B$ Morita equivalent. 
In the sequel we will suppress the isomorphisms $f$ and $g$ and 
write a Morita context as $\MK ABPQ$.
\end{Def}

\begin{lemma}\label{moritacontext}
Let $\D$ be in addition sovereign and let $U$ be an object of $\D$ with nonzero left 
and right dimension. Then the symmetric special Frobenius algebra $U\oti U^\vee$ 
is Morita equivalent to the tensor unit, with Morita context 
$\MK \Eins{U\oti U^\vee}{U^\vee}{U}$. 
\end{lemma}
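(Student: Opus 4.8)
The plan is to build the Morita context directly from the symmetric special Frobenius structure on $B\eq U\oti U^\vee$ recorded in Remark \ref{algbeispiel}. Throughout set $A\eq\Eins$; since $\D$ is strict and $\Eins$ is the tensor unit, $\Eins$-modules are just objects of $\D$, $\otA$ is the plain tensor product, and $\Eins\oti X\eq X\eq X\oti\Eins$ on the nose. Thus $P\eq U^\vee$ is automatically an $\Eins$-$B$-bimodule and $Q\eq U$ a $B$-$\Eins$-bimodule, with the one-sided $B$-actions $\varrho\eq d_U\oti\id_{U^\vee}$ on $P$ and $\rho\eq\id_U\oti d_U$ on $Q$ taken from Remark \ref{algbeispiel}. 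It remains to produce isomorphisms $f{:}~P\otB Q\congTo\Eins$ and $g{:}~Q\otA P\congTo B$ and to verify the two coherence squares \erf{moritadiagramme}.

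First I would treat $g$. As $\otA$ is the ordinary tensor product, $Q\otA P\eq U\oti U^\vee\eq B$ as objects, so the natural candidate is $g\eq\id_B$; what needs checking is that the induced $B$-bimodule structure on $Q\oti P$ is the regular one. Unpacking the definitions, the left $B$-action on $Q\oti P$ is $\rho\oti\id_{U^\vee}\eq\id_U\oti d_U\oti\id_{U^\vee}$ and the right $B$-action is $\id_U\oti\varrho\eq\id_U\oti d_U\oti\id_{U^\vee}$, and both coincide with the multiplication $m\eq\id_U\oti d_U\oti\id_{U^\vee}$ of $B$. Hence $Q\otA P\,{\cong}\,B$ as $B$-bimodules and $g\eq\id_B$ is an isomorphism of $B$-bimodules.

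Next I would identify $P\otB Q$. Realising the tensor product over the special Frobenius algebra $B$ as the image of the idempotent on $P\oti Q$ assembled from $\varrho$, $\rho$ and $\Delta\cir\eta$ (see the appendix), and inserting the explicit data above together with $\eta_B\eq b_U$ and $\Delta_B\eq\id_U\oti\tilde b_U\oti\id_{U^\vee}$, the two zigzag identities for the right duality collapse the $b_U$-loop to $d_U$; the idempotent then equals $\lambda\,\tilde b_U\cir d_U$ with $\lambda$ a nonzero scalar (namely $1/\dimL(U)$ in the appendix's normalisation, using $d_U\cir\tilde b_U\eq\dimL(U)$). Since $\dimL(U)\,{\neq}\,0$, this projector factors through $\Eins$, so that $\tilde b_U$ and $\lambda\,d_U$ exhibit $\Eins$ as a retract of $P\oti Q$ carrying exactly this idempotent. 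Therefore $P\otB Q\,{\cong}\,\Eins$, and I take $f$ to be the isomorphism induced by $d_U$ and $f^{-1}$ the one induced by $\tilde b_U$, the normalisation being fixed by the coherence squares.

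The main work, and the step I expect to be the genuine obstacle, is verifying the two squares \erf{moritadiagramme}. Both are checked by graphical calculus: one substitutes the explicit $f$ and $g$ together with the idempotents defining the tensor products over $B$ that occur, and then applies the zigzag identities for the two dualities, along with the scalar relations $d_U\cir\tilde b_U\eq\dimL(U)$ and $\tilde d_U\cir b_U\eq\dimR(U)$, to show that the two composites around each square agree. The delicate points are the bookkeeping of the projector $\lambda\,\tilde b_U\cir d_U$ where it sits inside a larger diagram and confirming that the normalisation factors cancel; since the second square is the mirror image of the first, with the roles of the left and right dualities interchanged, it follows from the analogous computation.
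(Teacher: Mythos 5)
Your proposal is correct and takes essentially the same route as the paper: you reduce everything to showing $U^\vee\otimes_{U\otimes U^\vee}U\,{\cong}\,\Eins$, compute the idempotent for the tensor product over $U\oti U^\vee$ to be $(\dimL(U))^{-1}\,\tilde b_U\cir d_U$ (matching the paper, including the normalisation forced by $\beta_{U\otimes U^\vee}\eq\dimL(U)$), exhibit $\Eins$ as its image via $\tilde b_U$ and $d_U$, and defer the squares \erf{moritadiagramme} to the projector techniques of the appendix, exactly as the paper does. Your explicit check that $Q\otA P$ carries the regular bimodule structure, so that $g\eq\id_B$ works, is a detail the paper leaves implicit.
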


\begin{proof}
We only need to show that $U^\vee{\otimes_{U\otimes U^\vee}}\,U \,{\cong}\, \Eins$. 
Since $U\oti U^\vee$ is symmetric special Frobenius, the idempotent $P_{U^\vee,U}$ for 
the tensor product over $U\oti U^\vee$, as described in appendix \ref{appA},
is well defined. One calculates that $P_{U^\vee, U} \eq {(\dimL(U))}^{-1}\,
\tilde b_U \cir d_U$. This implies that the tensor unit is indeed 
isomorphic to the image of $P_{U^\vee,U}$.
Finally, commutativity of the diagrams \erf{moritadiagramme} follows using 
the techniques of projectors as presented in the appendix.
\end{proof}

\begin{Def}
An object $X$ in a monoidal category is called \textit{invertible\/} iff there exists 
an object $X^\prime$ such that $X\oti X^\prime \cong \Eins \cong X^\prime\oti X$. 
\end{Def}

If the category $\D$ has small skeleton, 
then the set of isomorphism classes of invertible objects forms a group under the
tensor product. This group is called the {\em Picard group\/} $\pic{D}$ of $\D$.

\begin{lemma}\label{theabovelemma}
Let $\D$ be in addition sovereign.
\def\leftmargini{2.1em}
\begin{itemize}
\item[\rm (i)] Every invertible object of $\D$ is simple.
\item[\rm (ii)] An object $X$ in $\D$ is invertible iff $X^\vee$ is invertible.
\item[\rm (iii)] An object $X$ in $\D$ is invertible iff the morphisms
$b_X$ and $\tilde b_X$ are invertible.
\item[\rm (iv)] Every invertible object of $\D$ is absolutely simple.
\end{itemize}
\end{lemma}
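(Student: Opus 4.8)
The plan is to prove the four statements in the order (iii), (ii), (i), (iv), since the later parts are most naturally derived from the characterisation of invertibility in terms of the duality morphisms. Throughout I will use the standing assumption that $\Eins$ is simple and absolutely simple, together with the fact (established earlier) that in a sovereign category every object is isomorphic to its double dual and that dimensions are multiplicative under $\oti$.

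\medskip

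First I would establish (iii), which is the technical heart. Suppose $X$ is invertible, so there is $X'$ with $X\oti X'\cong\Eins\cong X'\oti X$. In a sovereign category the dual $X^\vee$ is a two-sided inverse as well: from $X\oti X'\cong\Eins$ one deduces $X'\cong X^\vee$ by a standard argument (tensor the isomorphism with $X^\vee$ on the appropriate side and use the duality triangle identities to cancel). Hence $X\oti X^\vee\cong\Eins\cong X^\vee\oti X$. Now $b_X\iN\Hom(\Eins,X\oti X^\vee)$ is a morphism into an object isomorphic to the simple object $\Eins$; to see it is invertible I would compose $b_X$ with the isomorphism $X\oti X^\vee\congTo\Eins$ to obtain an endomorphism of $\Eins$, and show this scalar is nonzero. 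Nonvanishing follows from the triangle identity $(\id_X\oti d_X)\cir(b_X\oti\id_X)=\id_X$: if the scalar were zero then $b_X$ would be zero (as $\Eins$ is simple, a nonzero map out of it is monic, so $b_X=0$ is the only alternative), forcing $\id_X=0$ and hence $X\cong0$, contradicting invertibility. The same reasoning applied to $\tilde b_X$ via the left-duality triangle identity gives its invertibility. Conversely, if $b_X$ and $\tilde b_X$ are invertible then $X\oti X^\vee\cong\Eins$ and $\Vee X\oti X\cong\Eins$; since $X^\vee\eq\Vee X$ in a sovereign category, $X^\vee$ is a two-sided inverse of $X$, so $X$ is invertible.

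\medskip

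Given (iii), part (ii) is immediate: $b_X$ and $\tilde b_X$ being invertible is a condition symmetric under passing from $X$ to $X^\vee$, because in a sovereign category the duality data for $X^\vee$ are built from those of $X$ (using $X^{\vee\vee}\cong X$), so $X$ is invertible iff $X^\vee$ is. For (i), I would argue that if $X$ is invertible with inverse $X^\vee$, then $\dimL(X)\,\dimR(X^\vee)\eq\dimL(\Eins)\eq1$ by multiplicativity of dimensions together with $\dimL(X\oti X^\vee)\eq\dimL(X)\,\dimL(X^\vee)$ and \erf{diml-dimr}; in particular $\dim$-type invariants of $X$ are nonzero. Simplicity then follows because the category is enriched over finite-dimensional vector spaces and the isomorphism $X\oti X^\vee\cong\Eins$ forces $\Hom(X,X)\cong\Hom(\Eins,X\oti X^\vee)\cong\Hom(\Eins,\Eins)$, which is one-dimensional; a one-dimensional endomorphism algebra over the absolutely simple unit leaves no room for a proper nonzero subobject, giving simplicity.

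\medskip

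Finally, (iv) strengthens (i): the displayed chain of isomorphisms $\Hom(X,X)\cong\Hom(\Eins,X\oti X^\vee)\cong\Hom(\Eins,\Eins)\eq\Bbbk\id_\Eins$ shows $\Hom(X,X)$ is exactly one-dimensional and spanned by $\id_X$, which is precisely the definition of absolute simplicity. The main obstacle I anticipate is the careful bookkeeping in (iii): one must verify that the composite $\Eins\stackrel{b_X}{\to}X\oti X^\vee\cong\Eins$ really is a nonzero scalar and that invertibility of this scalar upgrades $b_X$ to an isomorphism, rather than merely a split mono. This is where simplicity and absolute simplicity of $\Eins$ do the essential work, and it is the step I would write out most carefully.
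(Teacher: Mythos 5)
Your proposal has a genuine gap in part (i). You deduce simplicity of an invertible object $X$ from the computation $\Hom(X,X)\,{\cong}\,\Hom(\Eins,X\oti X^\vee)\,{\cong}\,\Hom(\Eins,\Eins)$, asserting that a one-dimensional endomorphism algebra ``leaves no room for a proper nonzero subobject''. In a general abelian category this implication is false: $\End(X)\eq\Bbbk\,\id_X$ is exactly absolute simplicity, and the paper notes just before Convention \ref{convention1} that absolute simplicity implies simplicity only under extra hypotheses such as semisimplicity, which are not assumed here. (For instance, in representations of the quiver $\bullet\To\bullet$ the object $\Bbbk\,{\stackrel{\id}{\to}}\,\Bbbk$ has endomorphism algebra $\Bbbk$ but the proper subobject $0\To\Bbbk$.) So your $\Hom$-space computation establishes (iv) but not (i). The paper proves (i) by a genuinely different mechanism: for a monic $e\colon U\To X$ it shows, using the duality morphisms $d_{X'}$ and $b_{X'}$ of the inverse object, that $\id_{X'}\oti e$ is again monic, hence an isomorphism into $X'\oti X\,{\cong}\,\Eins$ because $\Eins$ is simple, and then exhibits $e$ itself as a composite of isomorphisms. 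Some argument of this type -- tensoring with the inverse preserves and reflects monics -- is unavoidable and is missing from your proposal.

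A second, smaller problem sits at the base of your reordering. Your proof of (iii) starts from the claim $X'\,{\cong}\,X^\vee$, justified by ``tensor the isomorphism with $X^\vee$ \dots\ and cancel''. The cancellation you describe presupposes $X^\vee\oti X\,{\cong}\,\Eins$, which is part of what is being proved; the triangle identities by themselves do not let you cancel here (they never give $X^\vee\oti X\,{\cong}\,\Eins$ for general $X$). The claim is true, but its standard proof requires showing that $X'$, with suitably corrected evaluation and coevaluation morphisms (the triangle composite built from the isomorphisms $X\oti X'\,{\cong}\,\Eins\,{\cong}\,X'\oti X$ is a composite of isomorphisms and can be absorbed into the coevaluation), is a right dual of $X$, and then invoking uniqueness of duals -- considerably more than your parenthesis suggests. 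The paper's ordering (i), (ii), (iii) sidesteps this entirely: once (i) and (ii) are in place, $X\oti X^\vee$ is invertible, hence simple, and $b_X$ is a nonzero morphism between simple objects (nonzero by the triangle identity, as you also observe), so Schur's lemma makes it an isomorphism. I would therefore revert to the paper's order, which simultaneously repairs (i) and removes the dual-uniqueness input from (iii). Your parts (ii) and (iv) are essentially sound and close to the paper's arguments, up to using $\Hom(\Eins,X\oti X^\vee)$ where the paper uses $\Hom(X\oti X^\vee,\Eins)$.
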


\begin{proof}
(i)\, Let $X\oti X^\prime\cong X^\prime\oti X\cong \Eins$. Assume that $e{:}~ U\To X$ 
is monic for some object $U$. Then $\id_{X^\prime}\Oti\, e{:}~ X^\prime\oti U\To X^\prime\oti\,X$ 
is monic. Indeed, if $(\id_{X^\prime}\Oti\, e)\cir f \eq (\id_{X^\prime}\Oti\, e)\cir g$ 
for some morphisms $f$ and $g$, then by applying the duality morphism $d_{X^\prime}$ 
we obtain $e\cir (d_{X^\prime}\oti \id_U)\cir (\id_{{X^\prime}^\vee}\!\oti f) 
\eq e\cir (d_{X^\prime} \otimes \id_U)\cir (\id_{{X^\prime}^\vee}\!\oti g)$. As $e$ 
is monic this amounts to $(d_{X^\prime}\oti \id_U)\cir (\id_{{X^\prime}^\vee}\Oti\, f) 
\eq (d_{X^\prime}\oti \id_U)\circ (\id_{{X^\prime}^\vee}\oti g)$, which by applying 
$b_{X^\prime}$ and using the duality property of $d_{X^\prime}$ and $b_{X^\prime}$ shows 
that $f \eq g$. Thus $\id_{X^\prime}\Oti\, e{:}~ X^\prime\oti U\To X^\prime\oti X\cong \Eins$ 
is monic. As $\Eins$ is required to be simple, it is thus an isomorphism.
Then $\id_X \oti \id_{X'} \oti e$ is an isomorphism as well. By assumption there
exists an isomorphism $b{:}~ \Eins \To X \oti X'$. With the help of $b$ we can write
$e \eq (b^{-1} \oti \id_X) \cir (\id_X \oti \id_{X'} \oti e) \cir (b \oti \id_U)$. Thus
$e$ is a composition of isomorphisms, and hence an isomorphism. In summary,
$e{:} U \To X$ being monic implies that $e$ is an isomorphism. Hence $X$ is simple.
\\[.4em]
(ii)\, Note that
$X^\vee\Oti\, X^{\prime\vee}\cong (X^\prime \oti X)^\vee\cong\Eins^\vee\cong\Eins$,
and similarly $X^{\prime\vee} \Oti\, X^\vee \cong \Eins$.
\\[.4em]
(iii)\, Since by part (ii) $X^\vee$ is invertible, so is $X\oti X^\vee$. By part (i), 
$X\oti X^\vee$ is therefore simple and $b_X{:}~ \Eins\To X\oti X^\vee$ 
is a nonzero morphism between simple objects. By Schur's lemma it is an isomorphism. 
The argument for $\tilde b_X$ proceeds along the same lines, and the converse statement
follows by definition.
\\[.4em]
(iv)\, The duality morphisms give an isomorphism $\Hom(X,X) \,{\cong}\, 
\Hom(X \oti X^\vee,\Eins)$. From parts (i) and (ii) we know that 
$X \oti X^\vee \,{\cong}\, \Eins$, and so $\Hom(X,X) \,{\cong}\, \Hom(\Eins,\Eins)$. 
That $X$ is absolutely simple now follows because $\Eins$ is absolutely simple by assumption.
\end{proof}

Lemma \ref{theabovelemma} implies that for an invertible object $X$ one has
  \be \label{dimLdimR=1}
  \dimL(X)\,\dimR(X) = \dimL(X)\,\dimL(X^\vee)
  = \dimL(X\Oti X^\vee) = \dimL(\Eins) = 1 \,.
  \ee
With the help of this equality one checks that the inverse of $b_X$ is given by 
$\dimL (X)\,\tilde d_X$,
  \be 
  \dimL (X)\, \tilde d_X\cir b_X =\dimL (X)\, \dimR (X)\, \id_\Eins =\id_\Eins .
  \ee
Analogously we have $\dimR (X)\,d_X\cir \tilde b_X \eq \id_\Eins$; thus in particular
the left and right dimensions of an invertible object $X$ are nonzero. Further we have
  \be
  \dimL (X)~ b_X\cir \tilde d_X =\id_{X\otimes X^\vee} \qquad{\rm and}\qquad
  \dimR (X)~ \tilde b_X\cir d_X =\id_{X^\vee\otimes X} \,.
  \label{invertmorph} \ee

We denote the object representing an isomorphism class $g$ in $\Pic(\D)$ by $L_g$,
i.e.\ $[L_g] \eq g \iN \Pic(\D)$. Then $L_g\oti L_h\,{\cong}\, L_{gh}$. As the 
representative of the unit class $1$ we take the tensor unit, $L_1 \eq \Eins$.

\begin{lemma}\label{charakter}
Let $\D$ be in addition sovereign and $H$ a subgroup of $\,\Pic(\D)$.
\def\leftmargini{1.8em}
\begin{itemize}
\item[\rm (i)] The mappings $\,h\mapsto \dimL(L_h)$ and 
$\,h\mapsto\dimR(L_h)$ are characters on $H$.
\item[\rm (ii)] 
If $H$ is finite, then $\,\dimLR (\bigoplus_{h\in H}L_h)$ is either $0$ or $|H|$. 
It is equal to $|H|$ iff $\,\dimLR(L_h) \eq 1$ for all $h\iN H$.
\end{itemize}
\end{lemma}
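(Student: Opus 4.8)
The plan is to reduce both parts to elementary properties of categorical dimensions together with the standard orthogonality relation for characters of a finite group. The key facts I will use, all recorded above, are that dimensions are constant on isomorphism classes, multiplicative under the tensor product, additive under direct sums, and nonzero on invertible objects.

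For part (i) I would argue as follows. Each $h\iN H$ is an isomorphism class of invertible objects, so its representative $L_h$ is invertible; by Lemma \ref{theabovelemma} and the computation following \erf{dimLdimR=1}, $L_h$ then has nonzero left and right dimension. Since $L_g\oti L_h\,{\cong}\,L_{gh}$ and $L_1\eq\Eins$ with $\dim(\Eins)\eq1$, multiplicativity of the dimension under the tensor product yields $\dimL(L_{gh})\eq\dimL(L_g)\,\dimL(L_h)$ and $\dimL(L_1)\eq1$. Thus $h\mapsto\dimL(L_h)$ is a group homomorphism from $H$ into $\Bbbk^\times$, i.e.\ a character; the argument for $\dimR$ is verbatim the same.

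For part (ii) I would first invoke additivity of dimension under direct sums to write $\dimLR(\bigoplus_{h\in H}L_h)\eq\sum_{h\in H}\dimLR(L_h)\eq\sum_{h\in H}\chi(h)$, where $\chi$ denotes the character produced in part (i). The statement then reduces to the familiar character-sum computation. If $\chi$ is trivial, the sum is manifestly $|H|\cdot 1_\Bbbk$. If $\chi$ is nontrivial, I would pick $g\iN H$ with $\chi(g)\neq 1$; since $h\mapsto gh$ permutes $H$, reindexing gives $\chi(g)\sum_{h}\chi(h)\eq\sum_{h}\chi(gh)\eq\sum_{h}\chi(h)$, so $(\chi(g)-1)\sum_h\chi(h)\eq 0$ forces $\sum_h\chi(h)\eq 0$. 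Finally, $\chi$ is trivial exactly when $\dimLR(L_h)\eq1$ for every $h\iN H$, which is precisely the stated criterion.

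There is no genuine obstacle here; the content is the translation of ``multiplicative under $\otimes$, additive under $\oplus$'' into the language of group characters. The one point deserving a remark is that the dichotomy ``$0$ or $|H|$'' is meaningful and the criterion sharp: because $\Bbbk$ has characteristic zero by Convention \ref{convention1}, one has $|H|\cdot 1_\Bbbk\neq 0$, so the trivial and nontrivial cases genuinely give distinct values and cannot be conflated.
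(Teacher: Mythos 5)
Your proposal is correct and follows essentially the same route as the paper: part (i) from multiplicativity of dimensions and constancy on isomorphism classes, part (ii) from additivity under direct sums plus the orthogonality of characters, which you merely spell out via the standard reindexing argument $\chi(g)\sum_h\chi(h)=\sum_h\chi(h)$. Your closing remark that $\mathrm{char}\,\Bbbk=0$ guarantees $|H|\cdot 1_\Bbbk\neq 0$ makes explicit a point the paper leaves implicit, but introduces no new method.
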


\begin{proof}
Claim (i) follows directly from the multiplicativity of the left and right 
dimension under the tensor product and from the fact that the dimension only 
depends on the isomorphism class of an object. 
\\
Because of $\dimLR(\bigoplus_{h\in H}L_h) \eq \sum_{h\in H}\dimLR(L_h)$,
part (ii) is a consequence of the orthogonality of characters. 
\end{proof}

\begin{Def}
The \textit{Picard category\/} $\picc{D}$ of $\D$ is the full subcategory of 
$\D$ whose objects are direct sums of invertible objects of $\D$.
\end{Def}


\section{Fixed algebras}\label{sec:fix-alg}

We introduce the notion of a fixed algebra under a group 
of algebra automorphisms and establish some basic results on fixed algebras.

\begin{Def}\label{fixed-alg}
Let $(A, m, \eta )$ be an algebra in $\mathcal{D}$ and $H\,{\leq}\, \Aut (A)$
a group of (unital) automorphisms of $A$. Then a {\em fixed algebra\/} under the 
action of $H$ is a pair $(A^H, j)$, where $A^H$ is an object of $\D$ and 
$j {:}~ A^H\To A$ is a monic with $\alpha\cir j \eq j$
for all $\alpha\iN H$, such that the following universal property is fulfilled:
For every object $B$ in $\mathcal{D}$ and morphism
$f {:}~B\To A$ with $\alpha\cir f \eq f$ for all $\alpha\iN H$, there is a 
unique morphism $\bar f {:}~B\To A^H$ such that $j\cir \bar f \eq f$.
\end{Def}

The object $A^H$ defined this way is unique up to isomorphism. The following 
result justifies using the term `fixed algebra', rather than `fixed object'.

\begin{lemma}\label{fixed algebra}
Given $A$, $H$ and $(A^H,j)$ as in definition \ref{fixed-alg},
there exists a unique algebra structure on the 
object $A^H$ such that the inclusion $j {:}~ A^H\To A$ is a morphism of algebras.
\end{lemma}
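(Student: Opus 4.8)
The plan is to transport the algebra structure of $A$ along the universal property of Definition~\ref{fixed-alg}, treating the monic $j{:}~A^H\To A$ as the sole device for recognising morphisms that land in $A^H$. Since every element of $H$ is a \emph{unital algebra} automorphism, the morphisms built from the multiplication and unit of $A$ will turn out to be $H$-invariant in the appropriate sense, and hence factor uniquely through $j$; the resulting factorisations are the candidate structure morphisms on $A^H$.

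Concretely, I would first define the unit. As each $\alpha\iN H$ satisfies $\alpha\cir\eta\eq\eta$, the universal property applied to $f\eq\eta{:}~\Eins\To A$ yields a unique $\eta_{A^H}\iN\Hom(\Eins,A^H)$ with $j\cir\eta_{A^H}\eq\eta$. For the multiplication, consider $f\eq m\cir(j\oti j){:}~A^H\oti A^H\To A$. Using that $\alpha$ is an algebra morphism, $\alpha\cir m\eq m\cir(\alpha\oti\alpha)$, together with $\alpha\cir j\eq j$, one computes $\alpha\cir m\cir(j\oti j)\eq m\cir(\alpha\oti\alpha)\cir(j\oti j)\eq m\cir((\alpha\cir j)\oti(\alpha\cir j))\eq m\cir(j\oti j)$ for every $\alpha\iN H$. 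The universal property, now with $B\eq A^H\oti A^H$, therefore provides a unique $m_{A^H}\iN\Hom(A^H\oti A^H,A^H)$ with $j\cir m_{A^H}\eq m\cir(j\oti j)$. By construction these two defining relations say precisely that $j$ is a morphism of algebras.

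It then remains to verify that $(A^H,m_{A^H},\eta_{A^H})$ is an algebra and that the structure is unique. Both associativity and unitality I would check by composing with $j$ and using that $j$ is monic: for instance $j\cir m_{A^H}\cir(\id_{A^H}\oti m_{A^H})$ and $j\cir m_{A^H}\cir(m_{A^H}\oti\id_{A^H})$ both reduce, via the defining relation for $m_{A^H}$, to $m\cir(\id_A\oti m)\cir(j\oti j\oti j)$ and $m\cir(m\oti\id_A)\cir(j\oti j\oti j)$ respectively, which agree by associativity of $m$; cancelling the monic $j$ gives associativity on $A^H$. The unit axioms follow the same way from the unit axioms of $A$ and $j\cir\eta_{A^H}\eq\eta$. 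Uniqueness is immediate: any algebra structure $(m',\eta')$ making $j$ an algebra morphism must satisfy $j\cir m'\eq m\cir(j\oti j)\eq j\cir m_{A^H}$ and $j\cir\eta'\eq\eta\eq j\cir\eta_{A^H}$, so $m'\eq m_{A^H}$ and $\eta'\eq\eta_{A^H}$ because $j$ is monic. The only place the hypotheses enter nontrivially---and thus the conceptual crux rather than a genuine obstacle---is the $H$-invariance computation for $m\cir(j\oti j)$, which is exactly where one uses that $H$ acts by \emph{algebra} automorphisms; everything else is bookkeeping governed by the monicity of $j$.
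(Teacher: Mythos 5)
Your proposal is correct and follows essentially the same route as the paper: both verify $H$-invariance of $m\circ(j\otimes j)$ and $\eta$ using that the elements of $H$ are unital algebra morphisms fixing $j$, factor these through the universal property to obtain the structure morphisms, and then check associativity and unitality by composing with the monic $j$. Your explicit uniqueness argument (any algebra structure making $j$ an algebra morphism satisfies the same defining relations, so monicity of $j$ forces equality) is a detail the paper leaves implicit, and it is a welcome addition.
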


\begin{proof}
For arbitrary $\alpha\iN H$ consider the diagrams
  \be
  \bearl ~\\[-1.3em]
\xymatrix{
&& A^H \ar[r]^{j}&A\ar[r]^\alpha &A&&A^H \ar[r]^{j}&A\ar[r]^\alpha &A\\
&& &A^H\otimes A^H \ar[u]_{m\cir (j\otimes j)}
&&\raisebox{4.4em}{\mbox{and}}&&\Eins\ar[u]_\eta }
  \qquad\qquad\\[-3.2em]~
  \eear \ee
Since $\alpha$ is a morphism of algebras, we have $\alpha\cir m\cir (j\oti j) 
\eq m\cir ((\alpha\cir j)\oti(\alpha\cir j)) \eq m\cir (j\oti j)$; as this holds 
for all $\alpha\iN H$, the universal property of the fixed algebra yields a 
unique product morphism $\mu {:}~ A^H\oti A^H\To A^H$ such that 
$j\cir \mu \eq m\cir (j\oti j)$. By associativity of $A$ ~we have 
  \be \bearll
  j\cir\mu\cir (\mu\oti\id_{A^H}) \!\!&
  = m\cir (m\oti\id_A)\cir (j\oti j\oti j)
  \\{}\\[-.9em]&
  = m\cir (\id_A\oti m)\cir (j\oti j\oti j)
  = j\cir \mu\cir (\id_{A^H}\oti\mu ) \,.
  \eear \ee
Since $j$ is monic, this implies associativity of the product morphism 
$\mu$. Similarly, applying the universal property of $(A^H,j)$ on $\eta$ gives 
a morphism $\eta^\prime {:}~ \Eins\To A^H$ that has the properties of a 
unit for the product $\mu$. So $(A^H , \mu, \eta^\prime )$ is an 
associative algebra with unit.
\end{proof}

We proceed to show that fixed algebras under finite groups of automorphisms 
always exist in the situation studied here.
Let $H \,{\le}\, \Aut (A)$ be a finite subgroup of the group of algebra 
automorphisms of $A$. Set 
  \be\label{fixalgebraprojektor}
  P = P_H := \frac{1}{|H|} \sum_{\alpha\in H} \alpha \,\in \End(A) \,.
  \ee
Then $P\cir P \eq \frac{1}{|H|^2}\sum_{\alpha, \beta\in H}\alpha\cir\beta
\eq \frac{1}{|H|^2}\sum_{\alpha, \beta^\prime\in H}\beta^\prime
\eq \frac{1}{|H|}\sum_{\beta^\prime\in H}\beta^\prime \eq P$, i.e.\ $P$ is an
idempotent. Analogously one shows that $\alpha \cir P \eq P$ for every $\alpha\iN H$.
Further, since $\mathcal{D}$ is abelian, we can write $P \eq e\cir r$ with 
$e$ monic and $r$ epi. Denote the image of $P\,{\equiv}\,P_H$ by $A_P$,
so that $e{:}~ A_P\To A$ and $r \cir e \eq \id_{A_P^{}}$.

\begin{lemma}\label{fixalgebraexistiert}
The pair $(A_P,e)$ satisfies the universal property of the fixed algebra.
\end{lemma}

\begin{proof}
{}From $r\cir e \eq \id_{A_P}$ we see that 
$\alpha\cir e \eq \alpha\cir e\cir r\cir e \eq \alpha\cir P\cir e=P\cir e=e$ 
for all $\alpha\iN H$. For $B$ an object of $\mathcal{D}$ and 
$f {:}~ B\To A$ a morphism with $\alpha\cir f \eq f$ for all $\alpha \iN H$,
set $\bar f \,{:=}\, r\cir f$. Then 
$e\cir \bar f \eq e\cir r\cir f \eq P\cir f \eq \frac1{|H|}\sum_{\alpha\in H}\alpha\cir f
\eq \frac1{|H|}\sum_{\alpha\in H}f \eq f$. 
Further, if $f^\prime$ is another morphism satisfying $e\cir f^\prime \eq f$,
then $e\cir f^\prime \eq e\cir\bar f$ and, since $e$ is monic, $\bar f \eq f^\prime$, 
so $\bar f$ is unique. 
Hence the object $A_P$ satisfies the universal property of the fixed algebra $A^H$.
\end{proof}

We would like to express the structural morphisms of the fixed algebra through 
$e$ and $r$. To this end we 
introduce a candidate product $m_P$ and candidate unit $\eta_P$ on $A_P$: we set
  \be\label{mP-from-er}
  m_P := r\circ m\circ (e\oti e) \qquad{\rm and}\qquad 
  \eta_P := r\circ\eta \,.
  \ee
Note that $e$ is a morphism of unital algebras:
  \be\label{ealgebramorph}
  \bearll
  e\circ m_P \!\! &
  = e\circ r\circ m\circ (e\oti e) = P\circ m\circ (e\oti e)
  = \frac{1}{|H|}\sum_{\alpha\in H}\alpha\circ m\circ (e\oti e)
  \\{}\\[-.8em]&
  = \frac{1}{|H|}\sum_{\alpha\in H}m\circ ((\alpha\cir e) \oti (\alpha\cir e))
  = \frac{1}{|H|}\sum_{\alpha\in H}m\circ (e\oti e) = m\circ (e\oti e) \,,
  \\{}\\[-.4em]
  e\circ \eta_P \!\! &
  = e\circ r\circ \eta = P\circ \eta = \frac{1}{|H|}\sum_{\alpha\in H}\alpha\circ \eta
  = \frac{1}{|H|}\sum_{\alpha\in H}\eta = \eta \,.
  \eear \ee

\begin{lemma}\label{fixalgebreniso}
The algebra $(A_P,m_P,\eta_P)$ is isomorphic to the algebra structure that $A_P$ 
inherits as a fixed algebra.
\end{lemma}

\begin{proof}
An easy calculation shows that $m_P$ is associative and $\eta_P$ is a unit 
for $m_P$; thus $(A_P,m_P,\eta_P)$ is an algebra. Moreover, since according to 
lemma \ref{fixed algebra} there is a unique algebra structure on $A^H$ such that 
the inclusion into $A$ is a morphism of algebras, it follows that $A_P$ and $A^H$ 
are isomorphic as algebras.
\end{proof}

In the following discussion the term fixed algebra will always refer to the algebra $A_P$.

\begin{lemma}\label{projektorweglassen}
With $P \eq \frac{1}{|H|} \sum_{\alpha\in H} \alpha \eq e\cir r$ as in 
\erf{fixalgebraprojektor}, we have the following equalities of morphisms:
  \be \bearll
  r\cir m\cir (e\oti P) = r\cir m\cir (e\oti \id_A) \,,\quad&
  r\cir m\cir (P\oti e) = r\cir m\cir (\id_A\oti e) \qquad{\rm and} \\{}\\[-.8em]
  P\cir m\cir (e\oti e) = m\cir (e\oti e) \,.
  \eear \ee
\end{lemma}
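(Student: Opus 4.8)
The plan is to expand the idempotent $P \eq \frac1{|H|}\sum_{\alpha\in H}\alpha$ inside each of the three expressions and then exploit two auxiliary identities together with the fact that every $\alpha\iN H$ is an algebra automorphism, i.e.\ $m\cir(\alpha\oti\alpha)\eq\alpha\cir m$. The identities I need are $\alpha\cir e\eq e$ and $r\cir\alpha\eq r$ for all $\alpha\iN H$. The first was already recorded in the proof of Lemma \ref{fixalgebraexistiert}, where it follows from $\alpha\cir P\eq P$ and $r\cir e\eq\id_{A_P}$. For the second I would first note that $P\cir\alpha\eq\frac1{|H|}\sum_\beta\beta\cir\alpha\eq P$ by reindexing the sum (the same computation that gives $\alpha\cir P\eq P$, read from the other side), and then compose $P\cir\alpha\eq P$ with $r$ on the left, using $r\cir e\eq\id_{A_P}$, to obtain $r\cir\alpha\eq r$.

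With these in hand the first equality is a short calculation. After writing out $P$ and inserting $e\eq\alpha\cir e$ into the left tensor factor, I would rewrite each summand as
\be
r\cir m\cir(e\oti\alpha) \eq r\cir m\cir((\alpha\cir e)\oti\alpha) \eq r\cir m\cir(\alpha\oti\alpha)\cir(e\oti\id_A) \eq r\cir\alpha\cir m\cir(e\oti\id_A) \,,
\ee
where the last step uses the automorphism property. Now $r\cir\alpha\eq r$ collapses this to $r\cir m\cir(e\oti\id_A)$, which no longer depends on $\alpha$; averaging over $H$ then yields $r\cir m\cir(e\oti P)\eq r\cir m\cir(e\oti\id_A)$. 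The second equality is obtained by the mirror-image argument, twisting the left tensor factor instead of the right. The third equality is essentially already contained in the computation \erf{ealgebramorph}: expanding $P$ and using $\alpha\cir e\eq e$ together with $m\cir(\alpha\oti\alpha)\eq\alpha\cir m$ gives $\alpha\cir m\cir(e\oti e)\eq m\cir((\alpha\cir e)\oti(\alpha\cir e))\eq m\cir(e\oti e)$ for each $\alpha$, so that averaging leaves $m\cir(e\oti e)$.

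I do not expect a genuine obstacle here; the computation is routine once the right identities are assembled. The only point requiring care is the bookkeeping of where the automorphism property is applied and the correct placement of $e$ versus $\id_A$ in the tensor factors. The sole relation not stated verbatim earlier is $r\cir\alpha\eq r$, so I would be sure to justify it explicitly before invoking it; everything else reduces to inserting $\alpha\cir e\eq e$ and sliding the automorphisms through $m$.
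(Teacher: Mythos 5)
Your proposal is correct and follows essentially the same route as the paper: expand $P$, slide each $\alpha$ through $m$ via the automorphism property together with $\alpha\circ e = e$, and collapse using $r\circ\alpha = r$ (the paper writes $m\circ(e\otimes\alpha)=\alpha\circ m\circ((\alpha^{-1}\circ e)\otimes\mathrm{id}_A)$, which is your manipulation with $e=\alpha\circ e$ in a slightly different guise, and likewise disposes of the third equality by the computation that shows $e$ is an algebra morphism). Your explicit derivation of $r\circ\alpha = r$ from $P\circ\alpha = P$ is a welcome touch, since the paper invokes that identity without spelling it out.
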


\begin{proof}
Indeed, making use of $r\cir \alpha \eq r$ and $\alpha\cir e \eq e$ for all $\alpha \iN H$,
we have
  \be \bearll
  r\circ m\circ (e\oti P) \!\! &\dsty
  = \frac{1}{|H|}\sum_{\alpha\in H}r\circ m\circ (e\oti \alpha)
  = \frac{1}{|H|}\sum_{\alpha\in H}r\circ\alpha\circ m\circ ((\alpha^{-1}\circ e)\oti\id_A)
  \\{}\\[-.8em]&\dsty
  = \frac{1}{|H|}\sum_{\alpha\in H}r\circ m\circ (e\oti \id_A)
  = r\circ m\circ (e\oti \id_A) \,.
  \eear \ee
The other two equalities are established analogously.
\end{proof}

\begin{rem}\label{projektorweg}
With the help of the graphical calculus for morphisms in strict monoidal 
categories (see \cite{joSt5,Kassel,MAji,BaKi}, and e.g.\
Appendix A of \cite{ffrs} for the graphical representation of the structural
morphisms of Frobenius algebras in such categories), the equalities in Lemma
\ref{projektorweglassen} can be visualised as follows:
  \be 
\raisebox{-23pt}{
\bp(53,57)
\bild{pic01a}
\put(-36,-10){
\put(-1,2){\scriptsize $A_P$}
\put(28,2){\scriptsize $A$}
\put(14,61){\scriptsize $A_P$}
\put(-4.5,15){\scriptsize $e$}
\put(37,15.5){\scriptsize $P$}
\put(23,45){\scriptsize $r$}
}
\ep
} =\
\raisebox{-23pt}{
\bp(40,50)
\bild{pic01b}
\put(-35,-10){
\put(0,2){\scriptsize $A_P$}
\put(30,2){\scriptsize $A$}
\put(15,61){\scriptsize $A_P$}
\put(-2,15){\scriptsize $e$}
\put(25.3,45){\scriptsize $r$}
}
\ep
}\qquad\quad
\raisebox{-23pt}{
\bp(53,50)
\bild{pic01c}
\put(-40,-10){
\put(2,2){\scriptsize $A$}
\put(30,2){\scriptsize $A_P$}
\put(15,61){\scriptsize $A_P$}
\put(-6,16){\scriptsize $P$}
\put(39,16){\scriptsize $e$}
\put(26,45){\scriptsize $r$}
}
\ep
}=\
\raisebox{-23pt}{
\bp(40,50)
\bild{pic01d}
\put(-40,-10){
\put(2,2){\scriptsize $A$}
\put(30,2){\scriptsize $A_P$}
\put(15,61){\scriptsize $A_P$}
\put(39,16){\scriptsize $e$}
\put(26.5,45){\scriptsize $r$}
}
\ep
}\qquad\quad
\raisebox{-23pt}{
\bp(53,50)
\bild{pic01e}
\put(-40,-10){
\put(0,2){\scriptsize $A_P$}
\put(30,2){\scriptsize $A_P$}
\put(17,61){\scriptsize $A$}
\put(-2,16){\scriptsize $e$}
\put(39,16){\scriptsize $e$}
\put(25,44){\scriptsize $P$}
}
\ep
}=\
\raisebox{-23pt}{
\bp(60,50)
\bild{pic01f}
\put(-40,-10){
\put(0,2){\scriptsize $A_P$}
\put(30,2){\scriptsize $A_P$}
\put(17,61){\scriptsize $A$}
\put(-2,16){\scriptsize $e$}
\put(39,16){\scriptsize $e$}
}
\ep
}
  \ee
~\\
If $A$ is a Frobenius algebra, it is understood that $\Aut(A)$ consists of all
algebra automorphisms of $A$ which are at the same time also coalgebra 
automorphisms. Then for a Frobenius algebra $A$ the idempotent $P$ can also be 
omitted in the following situations, which we describe again pictorially:
  \be \bearl
\raisebox{-25pt}{
\bp(53,57)
\bild{pic02a}
\put(-40,-10){
\put(2,61){\scriptsize $A_P$}
\put(32,61){\scriptsize $A$}
\put(17,2){\scriptsize $A_P$}
\put(-1.5,47){\scriptsize $r$}
\put(41,46){\scriptsize $P$}
\put(26,18){\scriptsize $e$}
}
\ep
}\qquad\qquad
\raisebox{-25pt}{
\bp(53,40)
\bild{pic02b}
\put(-40,-10){
\put(3,61){\scriptsize $A$}
\put(31,61){\scriptsize $A_P$}
\put(15,2){\scriptsize $A_P$}
\put(-6.3,46){\scriptsize $P$}
\put(40,47){\scriptsize $r$}
\put(25,18){\scriptsize $e$}
}
\ep
}\qquad\qquad
\raisebox{-25pt}{
\bp(40,40)
\bild{pic02c}
\put(-40,-10){
\put(1,61){\scriptsize $A_P$}
\put(31,61){\scriptsize $A_P$}
\put(17,2){\scriptsize $A$}
\put(-2,47){\scriptsize $r$}
\put(40,47){\scriptsize $r$}
\put(25.7,18){\scriptsize $P$}
}
\ep
}
\\[-.6em]~
  \eear \ee
\end{rem}

\begin{prop}\label{fixalgebraeigenschaften}
Let $A$ be a Frobenius algebra in $\D$ and $H\,{\le}\, \Aut(A)$ a finite group 
of automorphisms of\ $A$.
\def\leftmargini{2.1em}
\begin{itemize}
\item[\rm (i)] 
$A_P$ is a Frobenius algebra, and the embedding $e{:}~ A_P \To A$ is
a morphism of algebras while the restriction $r{:}~ A \To A_P$ is
a morphism of coalgebras.
\item[\rm (ii)] 
If the category $\mathcal{D}$ is sovereign and $A$ is symmetric, 
then $A_P$ is symmetric, too. 
\item[\rm (iii)] 
If the category $\mathcal{D}$ is sovereign, $A$ is symmetric special and $A_P$ 
is an absolutely simple algebra and has nonzero left 
(equivalently right, cf.\ remark \ref{frobdim}) dimension, then $A_P$ is special.
\end{itemize}
\end{prop}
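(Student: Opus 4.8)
The plan is to prove the three parts of Proposition~\ref{fixalgebraeigenschaften} in order, using the concrete realisation $(A_P,m_P,\eta_P)$ with $P\eq e\cir r$ and the ``projector-dropping'' identities of Lemma~\ref{projektorweglassen} and Remark~\ref{projektorweg} as the main computational tools. The key structural observation is that while $e{:}~A_P\To A$ is a morphism of algebras (see \erf{ealgebramorph}), its partner $r{:}~A\To A_P$ should be a morphism of \emph{coalgebras}; this is the natural dual statement, and establishing it is what makes $A_P$ inherit a Frobenius structure. I would define the candidate coalgebra structure on $A_P$ by $\Delta_P \,{:=}\, (r\oti r)\cir\Delta\cir e$ and $\epsilon_P \,{:=}\, \epsilon\cir e$, dualising \erf{mP-from-er}.

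\medskip
\noindent\emph{Part (i).} First I would verify that $(A_P,\Delta_P,\epsilon_P)$ is a coalgebra, which is the exact dual of the computation already done for $(A_P,m_P,\eta_P)$ in Lemma~\ref{fixalgebreniso}, using that the $\alpha\iN H$ are also coalgebra automorphisms (as stipulated in Remark~\ref{projektorweg}) so that $r\cir\alpha\eq r$ and the dual projector-dropping identities hold. Then I would check that $r$ is a coalgebra morphism, i.e.\ $\Delta_P\cir r\eq(r\oti r)\cir\Delta$ and $\epsilon_P\cir r\eq\epsilon$, again mirroring \erf{ealgebramorph}. The main work is the Frobenius compatibility \erf{frobeigenschaft} for $A_P$: one computes $(\id_{A_P}\oti m_P)\cir(\Delta_P\oti\id_{A_P})$ by inserting the definitions, and repeatedly applies the identities of Lemma~\ref{projektorweglassen} (and their coalgebra counterparts from Remark~\ref{projektorweg}) to absorb the intermediate projectors $P\eq e\cir r$ sitting between factors. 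This lets one pull the computation up into $A$, where the Frobenius relation for $A$ applies, and then push back down; the three expressions in \erf{frobeigenschaft} for $A_P$ thereby all reduce to $\Delta_P\cir m_P$.

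\medskip
\noindent\emph{Part (ii).} Assuming $\D$ sovereign and $A$ symmetric, I would show $\Phi_1^{A_P}\eq\Phi_2^{A_P}$ for the two morphisms in \erf{Phi1Phi2} built from $(A_P,m_P,\eta_P,\Delta_P,\epsilon_P)$. The strategy is to express each $\Phi_i^{A_P}$ in terms of the corresponding $\Phi_i^A$ conjugated by $e$ and its dual $e^\vee$; concretely $\epsilon_P\cir m_P\eq\epsilon\cir m\cir(e\oti e)$ after dropping the inner projector via Lemma~\ref{projektorweglassen}, so the pairing used to define $\Phi_i^{A_P}$ is the restriction along $e$ of the pairing defining $\Phi_i^A$. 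Since $A$ is symmetric these two pairings agree on $A$, hence their restrictions along $e$ (together with $b_{A_P},\tilde b_{A_P}$) agree, giving $\Phi_1^{A_P}\eq\Phi_2^{A_P}$.

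\medskip
\noindent\emph{Part (iii).} With $A$ symmetric special and $A_P$ absolutely simple of nonzero dimension, I would prove $A_P$ is special by showing $m_P\cir\Delta_P\eq\beta_{A_P}\id_{A_P}$ and $\epsilon_P\cir\eta_P\eq\beta_\Eins\id_\Eins$ for suitable nonzero scalars. For the first relation, $m_P\cir\Delta_P$ is an $A_P$-bimodule endomorphism of $A_P$ (it is built from $m_P,\Delta_P$, which are bimodule morphisms by the Frobenius property established in~(i)); since $A_P$ is absolutely simple, $\Hom_{A_P|A_P}(A_P,A_P)\eq\Bbbk\id_{A_P}$, so $m_P\cir\Delta_P\eq\beta_{A_P}\id_{A_P}$ automatically. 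The genuine content is that $\beta_{A_P}\,{\neq}\,0$: here I would use that $\dim(A_P)\,{\neq}\,0$ by hypothesis, together with the relation $\beta_{A_P}\beta_\Eins\eq\dim(A_P)$ from Remark~\ref{frobdim} (which holds once specialness is known, so one argues that $\beta_{A_P}\eq0$ would force $\dim(A_P)\eq0$ via $\trL(m_P\cir\Delta_P)$). The second relation $\epsilon_P\cir\eta_P\eq\epsilon\cir e\cir r\cir\eta\eq\epsilon\cir P\cir\eta\eq\epsilon\cir\eta\eq\beta_\Eins\id_\Eins$ follows directly from the specialness of $A$ and $P\cir\eta\eq\eta$. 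The main obstacle across all three parts is the bookkeeping in part~(i): correctly tracking which projectors $P$ may be dropped on which leg via Lemma~\ref{projektorweglassen} while verifying \erf{frobeigenschaft}, and this is most cleanly carried out in the graphical calculus of Remark~\ref{projektorweg}.
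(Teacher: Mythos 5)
Your treatment of parts (i) and (ii) is essentially the paper's own proof: the same candidate structure $\Delta_P \,{:=}\, (r\oti r)\cir\Delta\cir e$ and $\epsilon_P \,{:=}\, \epsilon\cir e$, the same dualised verification that $r$ is a coalgebra morphism, the same reduction of the Frobenius property of $A_P$ to that of $A$ by inserting and dropping the idempotent $P$ via Lemma \ref{projektorweglassen} and Remark \ref{projektorweg}, and for symmetry the same rewriting of the morphisms \erf{Phi1Phi2} for $A_P$ as $e^\vee\cir\Phi_i\cir e$ so that $\Phi_1\eq\Phi_2$ for $A$ can be invoked. Part (iii) also follows the paper's skeleton ($\epsilon_P\cir\eta_P\eq\epsilon\cir\eta\,{\neq}\,0$ by specialness of $A$; $m_P\cir\Delta_P$ is an $A_P$-bimodule endomorphism, hence a scalar multiple $\beta\,\id_{A_P}$ by absolute simplicity), but your argument for $\beta\,{\neq}\,0$ does not close as written. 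The quantity $\trL(m_P\cir\Delta_P)$ you propose to use carries no independent information: once $m_P\cir\Delta_P\eq\beta\,\id_{A_P}$ it simply equals $\beta\,\dimL(A_P)$, so assuming $\beta\eq 0$ makes it vanish identically and yields no contradiction with $\dim(A_P)\,{\neq}\,0$; and, as you yourself note, the relation $\beta_{A_P}\beta_\Eins\eq\dim(A_P)$ of Remark \ref{frobdim} is only available once specialness is already established, so it cannot be the source of the contradiction either.

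What is needed is an evaluation of $m_P\cir\Delta_P$ that can be computed independently of $\beta$. The paper uses
  \be
  \epsilon_P\cir m_P\cir\Delta_P\cir\eta_P
  \,=\, \epsilon\cir m\cir(\id_A\oti P)\cir\Delta\cir\eta
  \,=\, \trL(P) \,=\, \dimL(A_P)\,,
  \ee
where the first equality follows by dropping projectors as in Remark \ref{projektorweg}, the second holds because $A$ is \emph{symmetric} (the Frobenius form $\epsilon\cir m\cir(\id_A\oti f)\cir\Delta\cir\eta$ then computes the categorical left trace of $f$), and the last is cyclicity of the trace applied to $P\eq e\cir r$, $r\cir e\eq\id_{A_P}$. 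Since $\dimL(A_P)\,{\neq}\,0$ by hypothesis, this forces $m_P\cir\Delta_P\,{\neq}\,0$, hence $\beta\,{\neq}\,0$, and $A_P$ is special. With this one substitution -- evaluating against $\epsilon_P$ and $\eta_P$ rather than taking $\trL(m_P\cir\Delta_P)$ -- your proof is complete and agrees with the paper's throughout.
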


\begin{proof}
(i) 
The algebra structure on $A_P$ has already been defined in \erf{mP-from-er}, and 
according to \erf{ealgebramorph} $e$ is a morphism of algebras.
Denoting the coproduct on $A$ by $\Delta$ and the counit by $\varepsilon$, we 
further set $\Delta_P \,{:=}\, (r\oti r)\circ \Delta\circ e$ and 
$\varepsilon_P \,{:=}\, \varepsilon\circ e$. Similarly to the calculation in 
\erf{ealgebramorph} one verifies that $r$ is a morphism of coalgebras, and that 
$\Delta_P$ is coassociative and $\varepsilon_P$ is a counit. Regarding the 
Frobenius property, we
give a graphical proof of one of the equalities that must be satisfied:
  \begin{eqnarray}
\Delta_P \circ m_P ~=\quad
\raisebox{-40pt}{
\bp(45,93)
\bild{pic03a}
\put(-40,-10){
\put(0,96){\scriptsize $A_P$}
\put(30,96){\scriptsize $A_P$}
\put(0,2){\scriptsize $A_P$}
\put(31,2){\scriptsize $A_P$}
\put(-3,80){\scriptsize $r$}
\put(41,80){\scriptsize $r$}
\put(26,49){\scriptsize $P$}
\put(-3,17){\scriptsize $e$}
\put(40,17){\scriptsize $e$}
}
\ep
}~~=~~
\raisebox{-40pt}{
\bp(40,90)
\bild{pic03b}
\put(-40,-10){
\put(1,96){\scriptsize $A_P$}
\put(30,96){\scriptsize $A_P$}
\put(2,2){\scriptsize $A_P$}
\put(31,2){\scriptsize $A_P$}
\put(-3,80){\scriptsize $r$}
\put(41,80){\scriptsize $r$}
\put(-3,17){\scriptsize $e$}
\put(40,17){\scriptsize $e$}
}
\ep
}~~=~~
\raisebox{-40pt}{
\bp(75,90)
\bild{pic03c}
\put(-40,-10){
\put(-13,96){\scriptsize $A_P$}
\put(30,96){\scriptsize $A_P$}
\put(-27,2){\scriptsize $A_P$}
\put(16,2){\scriptsize $A_P$}
\put(-17,80){\scriptsize $r$}
\put(41,80){\scriptsize $r$}
\put(-17,17){\scriptsize $e$}
\put(26,17){\scriptsize $e$}
}
\ep
}~~=~~
\raisebox{-40pt}{
\bp(90,90)
\bild{pic03d}
\put(-40,-10){
\put(-13,96){\scriptsize $A_P$}
\put(30,96){\scriptsize $A_P$}
\put(-27,2){\scriptsize $A_P$}
\put(16,2){\scriptsize $A_P$}
\put(-17,80){\scriptsize $r$}
\put(42,80){\scriptsize $r$}
\put(12.5,48.5){\scriptsize $P$}
\put(-15.5,17){\scriptsize $e$}
\put(26,17){\scriptsize $e$}
}
\ep
}
\nonumber\\[1.9em]
=~ (m_P \oti \id_{A_P^{}}) \circ (\id_{A_P^{}} \Oti\, \Delta_P) \,.~~
\end{eqnarray}
Here it is used that according to remark \ref{projektorweg} we are allowed to remove
and insert idempotents $P$, and then the Frobenius property of $A$ is invoked. 
The other half of the Frobenius property is seen analogously.
\\[.5em]
(ii) The following chain of equalities shows that $A_P$ is symmetric:
  \be
\raisebox{-40pt}{
\bp(70,87)
\bild{pic04a}
\put(-40,-10){
\put(34,92){\scriptsize $A_P^\vee$}
\put(-25,2){\scriptsize $A_P$}
\put(-14,70){\scriptsize $e$}
\put(-14,58){\scriptsize $r$}
\put(-27,30){\scriptsize $e$}
\put(15,30){\scriptsize $e$}
}
\ep
}~~=~~
\raisebox{-40pt}{
\bp(70,80)
\bild{pic04b}
\put(-40,-10){
\put(30,92){\scriptsize $A_P^\vee$}
\put(-28,2){\scriptsize $A_P$}
\put(-31,30){\scriptsize $e$}
\put(21.5,44){\scriptsize $e^\vee$}
}
\ep
}~~=~~
\raisebox{-40pt}{
\bp(70,80)
\bild{pic04c}
\put(-40,-10){
\put(-27,92){\scriptsize $A_P^\vee$}
\put(29,2){\scriptsize $A_P$}
\put(-17,46){\scriptsize $e^\vee$}
\put(25,31){\scriptsize $e$}
}
\ep
}~~=~~
\raisebox{-40pt}{
\bp(70,80)
\bild{pic04d}
\put(-40,-10){
\put(-27,92){\scriptsize $A_P^\vee$}
\put(29,2){\scriptsize $A_P$}
\put(26,70){\scriptsize $e$}
\put(26,58){\scriptsize $r$}
\put(-3,30){\scriptsize $e$}
\put(38,30){\scriptsize $e$}
}
\ep }  
  \ee
~\\
Here the notations
  \be
b_X~=~
\raisebox{-10pt}{
\bp(37,38)
\bild{dual1}
\put(-40,-10){
\put(8,37){\scriptsize $X$}
\put(34,37){\scriptsize $X^\vee$}
}
\ep
}\qquad\mathrm{and}~~\qquad \tilde b_X~=~
\raisebox{-10pt}{
\bp(30,34)
\bild{dual2}
\put(-40,-10){
\put(6,37){\scriptsize $X^\vee$}
\put(36,37){\scriptsize $X$}
}
\ep }  
  \ee
are used for the duality morphisms $b_X$ and $\tilde b_X$, respectively, of an 
object $X$. The morphisms $d_X$ and $\tilde d_X$ are drawn in a similar way. 
\\[.5em]
(iii) We have $\varepsilon_P\cir\eta_P \eq \varepsilon\cir e\cir r\cir\eta
\eq \varepsilon\cir\eta$, which is nonzero by specialness of $A$.
As $A_P$ is associative, $m_P$ is a morphism of $A_P$-bimodules. The Frobenius 
property ensures that $\Delta_P$ is also a morphism of bimodules. Hence 
$m_P\cir\Delta_P$ is a morphism of bimodules, and by absolute simplicity of $A_P$ 
it is a multiple of the identity. Moreover, $m_P\cir\Delta_P$ is not zero: we have 
  \be
  \varepsilon_P\circ m_P\circ\Delta_P\circ\eta_P
  = \varepsilon\circ m\circ (\id_A\oti P)\circ\Delta\circ\eta
  \ee
which, as $A$ is symmetric, is equal to $\trL(P) \eq \dimL(A_P)\,{\neq}\, 0$. 
We conclude that $m_P\cir\Delta_P \,{\neq}\, 0$. Hence $A_P$ is special.
\end{proof}

\begin{rem} 
In the above discussion the category $\mathcal{D}$ is assumed to be abelian, but 
this assumption can be relaxed. Of the properties of an abelian category we only
used that the morphism sets are abelian groups, that composition is bilinear, and that 
the relevant idempotents factorise in a monic and an epi, i.e.\ that $\mathcal{D}$ 
is idempotent complete. In addition we assumed
that morphisms sets are finite-dimensional $\Bbbk$-vector spaces.
\\[.2em]
{}From eq.\ \erf{fixalgebraprojektor} onwards, and in particular in proposition 
\ref{fixalgebraeigenschaften}, it is in addition used that $\D$ is 
enriched over $\vectk$. If this is not the case, one can no longer, in general, 
define an idempotent $P$ through $\frac{1}{|H|}\sum_{\alpha\in H}\alpha$, and there 
need not exist a coproduct on the fixed algebra $A^H$, even if there is one on $A$.
\end{rem}


\section{Algebras in the Morita class of the tensor unit}\label{sec:1-class}

Recall that according to our convention \ref{convention1}
$(\mathcal{D}, \otimes, \Eins )$ is abelian strict monoidal, with simple
and absolutely simple tensor unit and enriched over $\vectk$ with $\Bbbk$
of characteristic zero. {}From now on we further assume that $\mathcal{D}$ 
is skeletally small and sovereign.

We now associate to an algebra $(A, m, \eta )$ in $\D$ a specific subgroup of 
its automorphism group -- the inner automorphisms -- which are defined as follows.
The space $\Hom (\Eins, A)$ becomes a $\Bbbk$-algebra by defining the product 
as $f\,{\ast}\, g \,{:=}\, m\cir (f\oti g)$ for $f,g \iN \Hom (\Eins, A)$.
The morphism $\eta\iN \Hom (\Eins, A)$ is a unit for this product. We call 
a morphism $f$ in $\Hom (\Eins, A)$ invertible iff there exists a morphism 
$f^{-}\iN \Hom (\Eins, A)$ such that 
$f \,{\ast}\, f^{-} \eq \eta \eq f^{-} \,{\ast}\, f$. Now the morphism
  \be
  \omega_f :=m\cir (m\oti f^{-})\cir (f\oti \id_A)~\iN\Hom (A,A)
  \ee
is easily seen to be an algebra automorphism. The automorphisms of this 
form are called \textit{inner\/} automorphisms; they form a normal
subgroup $\Inn (A)\,{\le}\,\Aut (A)$ as is seen below.

\begin{Def}
For $A$ an algebra in $\D$ and $\alpha, \beta\iN \Aut (A)$, the $A$-bimodule
${}_\alpha A_\beta \eq (A,\rho_\alpha,\varrho_\beta)$ is the bimodule which 
has $A$ as underlying object and left and right actions of $A$ given by
  \be
  \rho_\alpha := m\cir (\alpha\oti\id_A) \qquad{\rm and}\qquad
  \varrho_\beta := m\cir (\id_A\oti\beta) \,,
  \ee
respectively. These left and right actions of $A$ are said to be \textit{twisted\/}
by $\alpha$ and $\beta$, respectively, and ${}_\alpha A_\beta$ is called a
\textit{twisted bimodule\/}.
\end{Def}

That this indeed defines an $A$-bimodule structure on the object $A$ is easily 
checked with the help of the multiplicativity and unitality of $\alpha$ and $\beta$.
Further, as shown in \cite{vazh,fuRs11}, the bimodules  ${}_\alpha A_\beta$ are 
invertible. Denote the isomorphism class of a bimodule $X$ by $[X]$. By setting 
  \be
  \PsiA (\alpha ) :=\, {}[{}_{\id}A_\alpha]
  \ee
one obtains an exact sequence 
  \be\label{sequenz}
  0\longrightarrow \Inn (A)\longrightarrow \Aut (A)\stackrel{\PsiA}{\longrightarrow}
  \Pic (\mathcal{D}_{A|A})
  \ee
of groups. In particular one sees that the subgroup $\Inn (A)$ is in fact a normal 
subgroup, as it is the kernel of the homomorphism $\PsiA$. The proof of exactness
of this sequence in \cite{vazh,fuRs11} is not only valid 
in braided monoidal categories, but also in the present more general situation.

Let now $A$ and $B$ be Morita equivalent algebras in $\mathcal{D}$, with
$\MK ABPQ$ a Morita context ($P \,{\equiv}\,{}_AP_B\,, Q \,{\equiv}\, {}_BQ_A$).
Then the mapping
  \be
  \begin{array}{lrcl}
  \Pi_{Q,P}:
  & \Pic (\D_{\!A|A})& \stackrel{\cong}{\longrightarrow} & \Pic (\D_{B|B})
  \\{}\\[-1.1em]
  & [X] & \longmapsto & [Q \otA X \otA P] 
  \eear \label{piciso}
  \ee
constitutes an isomorphism between the 
Picard groups $\Pic (\mathcal{D}_{A|A})$ and $\Pic (\mathcal{D}_{B|B})$.
In particular, if $A$ is an algebra that is Morita equivalent to the tensor unit
$\Eins$, then we have an isomorphism $\Pic (\mathcal{D}_{A|A})\,{\cong}\, \pic{D}$. 
As Morita equivalent algebras need not have isomorphic automorphism groups, the 
images of the group homomorphisms $\PsiA{:}~ \Aut (A)\To \Pic (\D_{\!A|A})$ and 
$\Psi_B{:}~ \Aut (B) \to \Pic (\D_{B|B})$ will in general be non-isomorphic.

In the following we will consider subgroups of the group $\Pic(\D)$. For 
a subgroup $H\le\Pic(\D)$ we put 
  \be\label{Q(H)}
  Q \,\equiv\, Q(H):=\bigoplus_{h\in H}L_h.
  \ee

\begin{rem}\label{qselbstdual}
Since the object $Q$ is the direct sum over a whole subgroup of $\pic{D}$ and 
$L_g^\vee\,{\cong}\, L_{g^{-1}}$, it follows that $Q\,{\cong}\, Q^\vee$. As a 
consequence, left and right dimensions of $Q$ are equal,
and accordingly in the sequel we use the notation $\dim(Q)$ for both of them.
\end{rem}

\begin{prop}\label{mengenschnitt}
Let $H\,{\le}\,\pic{D}$ be a finite subgroup such that 
$\,\dim(Q)\,{\ne}\,0$ for $Q \,{\equiv}\, Q(H)$. Then with the algebra
  \be\label{A(H)}
  A \,\equiv\, A(H) := Q\oti Q^\vee
  \ee
and the Morita context $\MK\Eins{A}{Q^\vee\!}{Q}$ introduced in lemma 
\ref{moritacontext}, we have $H \eq \im(\Pi_{Q^\vee\!,Q} {\circ} \PsiA)$, 
i.e.\ the subgroup $H$ is recovered as the image of the composite map
  \be
  \Aut (A) \stackrel{\Psi_{\!A}}{\longrightarrow} \Pic(\D_{\!A|A})
  \stackrel{\Pi_{Q^\vee\!,Q}}{-\!\!\!\!-\!\!\!\!-\!\!\!\!\longrightarrow} \Pic(D) \,.
  \ee
\end{prop}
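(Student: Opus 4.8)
The plan is to use that $\Pi_{Q^\vee\!,Q}$ is a group isomorphism, so that $\im(\Pi_{Q^\vee\!,Q}\cir\PsiA)\eq\Pi_{Q^\vee\!,Q}(\im\PsiA)$, and to identify the right-hand side one isomorphism class at a time. Its inverse is $\Pi_{Q,Q^\vee}{:}~[L_g]\mapsto[Q\oti L_g\oti Q^\vee]$, since $Q^\vee\!\otA Q\,{\cong}\,\Eins$ by lemma \ref{moritacontext} (cf.\ \erf{piciso}). So, writing $X_g:=Q\oti L_g\oti Q^\vee$ for the invertible $A$-bimodule obtained from $L_g$ (with the $A\eq Q\oti Q^\vee$-actions on the outer tensor factors), it is enough to prove that for every $g\iN\pic{D}$ one has $[X_g]\iN\im\PsiA$ if and only if $g\iN H$; the claim then follows because $\Pi_{Q^\vee\!,Q}([X_g])\eq[L_g]$.

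First I would recall the description of the image built into the Rosenberg-Zelinsky sequence \erf{sequenz}: a class $[M]\iN\Pic(\D_{A|A})$ lies in $\im\PsiA$ exactly when $M\,{\cong}\,A$ as a \emph{left} $A$-module. One direction is immediate, as ${}_{\id}A_\alpha$ equals $A$ as a left module; for the converse a left-module isomorphism ${}_{\id}A\To M$ transports the right action of $M$ to a morphism $\varrho{:}~A\oti A\To A$ that commutes with left multiplication, forcing $\varrho\eq m\cir(\id_A\oti\alpha)$ with $\alpha:=\varrho\cir(\eta\oti\id_A)$ an algebra endomorphism and $M\,{\cong}\,{}_{\id}A_\alpha$. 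The hard part will be precisely this converse: one must use invertibility of the bimodule $M$ to upgrade $\alpha$ from an endomorphism to an automorphism, which is the content underlying the exactness of \erf{sequenz}.

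Next I would translate the condition ``$M\,{\cong}\,A$ as a left module'' through the Morita equivalence of lemma \ref{moritacontext}. The functor $M\mapsto Q^\vee\!\otA M$ is an equivalence from left $A$-modules to $\D$ with quasi-inverse $V\mapsto Q\oti V$; under it $A$ goes to $Q^\vee\!\otA A\,{\cong}\, Q^\vee$, and $X_g$ goes to $Q^\vee\!\otA(Q\oti L_g\oti Q^\vee)\,{\cong}\,(Q^\vee\!\otA Q)\oti L_g\oti Q^\vee\,{\cong}\, L_g\oti Q^\vee$. Hence $X_g\,{\cong}\,A$ as left $A$-modules if and only if $L_g\oti Q^\vee\,{\cong}\, Q^\vee$ in $\D$.

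Finally I would settle this last condition by a direct computation of summands. By remark \ref{qselbstdual}, $Q^\vee\,{\cong}\, Q\eq\bigoplus_{h\in H}L_h$, while $L_g\oti Q^\vee\,{\cong}\,\bigoplus_{h\in H}L_{gh}\eq\bigoplus_{k\in gH}L_k$. Because the $L_m$ are pairwise non-isomorphic and absolutely simple (lemma \ref{theabovelemma}), the isomorphism class of such a direct sum determines the underlying subset of $\pic{D}$, so $L_g\oti Q^\vee\,{\cong}\, Q^\vee$ holds iff $gH\eq H$, i.e.\ iff $g\iN H$. Combining the three reductions gives $[X_g]\iN\im\PsiA\Leftrightarrow g\iN H$, and hence $\im(\Pi_{Q^\vee\!,Q}\cir\PsiA)\eq H$, as desired.
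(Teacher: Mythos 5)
Your reduction strategy is genuinely different from the paper's and much of it is sound: transporting along the Morita equivalence to convert ``$X_g\,{\cong}\,A$ as left $A$-modules'' (where $X_g\eq Q\oti L_g\oti Q^\vee$) into ``$L_g\oti Q^\vee\,{\cong}\,Q^\vee$ in $\D$'', and then reading the subgroup off the multiplicity-free decomposition $L_g\oti Q^\vee\,{\cong}\,\bigoplus_{k\in gH}L_k$, is correct; the exclusion $g\,{\notin}\,H$ argument in particular only needs the easy direction of your characterization and matches the paper's own remark that $Q\oti L_g\oti Q^\vee$ is then not isomorphic to $Q\oti Q^\vee$ even as an object. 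The genuine gap is exactly at the step you flag as ``the hard part'' and then do not carry out: the claim that $[M]\iN\im\PsiA$ iff $M\,{\cong}\,A$ as a left $A$-module. Your construction of an algebra \emph{endomorphism} $\alpha$ with $M\,{\cong}\,{}_{\id}A_\alpha$ from a left-module isomorphism is fine, but the upgrade of $\alpha$ to an \emph{automorphism} is not ``the content underlying the exactness of \erf{sequenz}'': exactness there only identifies $\ker\PsiA$ with $\Inn(A)$, and says nothing about which invertible bimodules lie in the image, nor anything about endomorphisms. Since the inclusion $H\,{\subseteq}\,\im(\Pi_{Q^\vee\!,Q}\cir\PsiA)$ is the substantive half of the proposition --- the half on which the paper spends its entire proof, explicitly constructing the $\alpha_h$ from chosen isomorphisms $f_h{:}~Q\oti L_h\To Q$ and verifying multiplicativity, unitality and invertibility graphically --- your proposal as written defers all of the real work to an unproven lemma with an incorrect citation.

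The gap is repairable within your own framework, which is why I would call it a missing step rather than a wrong approach. For $h\iN H$ your argument yields endomorphisms $\alpha,\beta$ with $X_h\,{\cong}\,{}_{\id}A_\alpha$ and $X_{h^{-1}}\,{\cong}\,{}_{\id}A_\beta$ (both $h$ and $h^{-1}$ lie in $H$). Since $\Pi_{Q,Q^\vee}$ is a group homomorphism one has $X_h\otA X_{h^{-1}}\,{\cong}\,A\,{\cong}\,X_{h^{-1}}\otA X_h$ as bimodules, and a direct check (needing no invertibility of the twists) gives ${}_{\id}A_\alpha\otA{}_{\id}A_\beta\,{\cong}\,{}_{\id}A_{\alpha\circ\beta}$. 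Now a bimodule isomorphism $\phi{:}~A\To{}_{\id}A_\gamma$ forces $\phi\eq m\cir(\id_A\oti u)$ with $u\,{:=}\,\phi\cir\eta$ invertible in $(\Hom(\Eins,A),\ast)$ and $\gamma\eq\omega_{u^-}$ inner --- in particular invertible; this is the same computation that determines the kernel in \erf{sequenz}, applied to an endomorphism. Applying it to $\gamma\eq\alpha\cir\beta$ and $\gamma\eq\beta\cir\alpha$ makes $\alpha$ two-sided invertible, hence an automorphism, after which your proof goes through. With this lemma supplied, your route is appreciably shorter and more conceptual than the paper's hands-on construction, at the price of being non-constructive: the paper's proof exhibits the automorphisms $\alpha_h$ explicitly (a fact that is then exploited in lemma \ref{alphagestalt} and theorem \ref{gruppenschnitt}), whereas yours only establishes their existence.
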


\begin{proof}
The isomorphism $\Pi_{Q,Q^\vee}{:}~ \Pic (\D) \To \Pic(\D_{\!A|A})$ is given by 
$[L_g] \,{\mapsto}\, [Q\oti L_g\oti Q^\vee ]$. For $h\iN H$ we want to find 
automorphisms $\alpha_h$ of $A$ such that $_{\id}A_{\alpha_h} \,{\cong}\,
Q\oti L_h\oti Q^\vee$ as $A$-bimodules. We first observe the isomorphisms 
$Q\oti L_h\,{\cong}\,\bigoplus_{g\in H}L_g\oti L_h \cong \bigoplus_{g\in H}
L_{gh} \,{\cong}\, Q$.  We make a (in general non-canonical) choice of 
isomorphisms $f_h {:}~ Q\oti L_h \congTo Q$, with the morphism $f_1$ chosen to 
be the identity $\id_Q$. Then 
for each $h\iN H$ we define the endomorphism $\alpha_h$ of $Q\oti Q^\vee$ by
  \be \label{alphah}
\alpha_h ~:=\qquad
\raisebox{-25pt}{
\bp(70,60)
\bild{pic05}
\put(-40,-10){
\put(30,66){\scriptsize $Q^\vee$}
\put(30,2){\scriptsize $Q^\vee$}
\put(-9,66){\scriptsize $Q$}
\put(-8.5,2){\scriptsize $Q$}
\put(-26.5,32){\scriptsize $f_h^{-1}$}
\put(3.5,41){\scriptsize $L_h$}
\put(40.8,32){\scriptsize $f_h^\vee$}
}
\ep }  
  \ee
~\\
These are algebra morphisms:
\begin{eqnarray}
m \circ (\alpha_h \oti \alpha_h) ~=\quad~
        \raisebox{-30pt}{
  \begin{picture}(84,75)
   \bild{pic06a}
   \put(-151,-235){
   \put(91,306){\scriptsize $Q$}
   \put(119,306){\scriptsize $Q^\vee$}
   \put(68,228){\scriptsize $Q$}
   \put(90,228){\scriptsize $Q^\vee$}
   \put(117,228){\scriptsize $Q$}
   \put(141,228){\scriptsize $Q^\vee$}
   \put(50.2,253){\scriptsize \scriptsize $f_h^{-1}$}
   \put(78.5,253){\scriptsize \scriptsize $f_h^\vee$}
   \put(103.2,253){\scriptsize \scriptsize $f_h^{-1}$}
   \put(123.5,269.5){\scriptsize $L_h$}
   \put(130.5,253){\scriptsize \scriptsize $f_h^\vee$}
     }
  \end{picture}}~~~~
=~~\raisebox{-30pt}{
        \begin{picture}(155,75)
        \bild{pic06b}
        \put(-151,-235){
        \put(90,306){\scriptsize $Q$}
        \put(122,306){\scriptsize $Q^\vee$}
        \put(67,228){\scriptsize $Q$}
        \put(91,228){\scriptsize $Q^\vee$}
        \put(120,228){\scriptsize $Q$}
        \put(145,228){\scriptsize $Q^\vee$}
        \put(74.7,290){\scriptsize \scriptsize $f_h^{-1}$}
        \put(80,252){\scriptsize \scriptsize $f_h^\vee$}
        \put(105.2,253){\scriptsize \scriptsize $f_h^{-1}$}
        \put(132.7,289){\scriptsize \scriptsize $f_h^\vee$}
        }\end{picture}}
\nonumber\\
=~~ \raisebox{-30pt}{
        \begin{picture}(85,99)
        \bild{pic06c}
        \put(-151,-235){
        \put(93,306){\scriptsize $Q$}
        \put(122,306){\scriptsize $Q^\vee$}
        \put(70,228){\scriptsize $Q$}
        \put(92,228){\scriptsize $Q^\vee$}
        \put(121,228){\scriptsize $Q$}
        \put(144,228){\scriptsize $Q^\vee$}
        \put(76.2,285){\scriptsize \scriptsize $f_h^{-1}$}
        \put(98.5,298){\scriptsize $L_h$}
        \put(81.2,253){\scriptsize \scriptsize $f_h^\vee$}
        \put(107.2,253){\scriptsize \scriptsize $f_h^{-1}$}
        \put(133.2,284){\scriptsize \scriptsize $f_h^\vee$}
        }\end{picture}
}
~=~~ \raisebox{-30pt}{
        \begin{picture}(84,90)
        \bild{pic06d}
        \put(-151,-235){
        \put(100,306){\scriptsize $Q$}
        \put(123,306){\scriptsize $Q^\vee$}
        \put(77,228){\scriptsize $Q$}
        \put(99,228){\scriptsize $Q^\vee$}
        \put(125,228){\scriptsize $Q$}
        \put(146,228){\scriptsize $Q^\vee$}
        \put(83.2,285){\scriptsize $f_h^{-1}$}
        \put(134.5,284){\scriptsize \scriptsize $f_h^\vee$}
        }\end{picture}
}=~\alpha_h\circ m\,.
  \end{eqnarray}
~\\
Here we have used that by lemma \ref{charakter}(ii) we have $\dimLR (L_h) \eq 1$ 
for $h\iN H$. The third equality is then a consequence of 
$\id_{L_h^\vee\otimes L_h} {=}\, \tilde b_{L_h}{\circ}\, d_{L_h}$, see equation 
\erf{invertmorph}; in the fourth equality $f_h$ is cancelled against $f_h^{-1}$ 
by using properties of the duality. (Also, for better readability, here and 
below we refrain from labelling some of the $L_h$-lines.)
\\[3pt]
Further, the morphisms $\alpha_h$ are also unital:
  \be
\alpha_h \circ \eta ~=\qquad
\raisebox{-30pt}{
\bp(60,66)
\bild{pic07a}
\put(-40,-10){
\put(31,73){\scriptsize $Q^\vee$}
\put(-2.4,73){\scriptsize $Q$}
\put(-19,44){\scriptsize $f_h^{-1}$}
\put(40.5,43){\scriptsize $f_h^\vee$}
}
\ep
}~=\qquad
\raisebox{-30pt}{
\bp(45,60)
\bild{pic07b}
\put(-40,-10){
\put(35,69){\scriptsize $Q^\vee$}
\put(3,69){\scriptsize $Q$}
\put(-10,43){\scriptsize $f_h^{-1}$}
\put(-7.5,28.5){\scriptsize $f_h$}
}
\ep
}~=~~
\raisebox{-30pt}{
\bp(40,60)
\bild{pic07c}
\put(-40,-10){
\put(14,34){\scriptsize $L_h$}
\put(33,69){\scriptsize $Q^\vee$}
\put(1,69){\scriptsize $Q$}
}
\ep
}~=~~
\raisebox{-30pt}{
\bp(45,60)
\bild{pic07d}
\put(-40,-10){
\put(33,69){\scriptsize $Q^\vee$}
\put(1,69){\scriptsize $Q$}
}
\ep
} =~ \eta\,,
  \ee
where again by lemma \ref{charakter} we have $\dimR(L_h) \eq 1$. 
The inverse of $\alpha_h$ is given by
  \be
\alpha_h^{-1}~=~\quad
\raisebox{-30pt}{
\bp(73,65)
\put(0,5){
\bild{pic08}
\put(-40,-10){
\put(30,67){\scriptsize $Q^\vee$}
\put(31,2){\scriptsize $Q^\vee$}
\put(-9,67){\scriptsize $Q$}
\put(-10,2){\scriptsize $Q$}
\put(-21,37){\scriptsize $f_h^{}$}
\put(40.5,37){\scriptsize $f_h^{-\vee}$}
\put(3.6,26){\scriptsize $L_h$}
}}
\ep }
  \ee
as is seen in the following calculations:
  \begin{eqnarray}&&
\alpha_h\cir\alpha_h^{-1}~=\qquad
\raisebox{-36pt}{
\bp(71,75)
\put(0,6){ 
\bild{pic31a}
\put(-40,-10){
\put(-2.5,76){\scriptsize $Q$}
\put(-3,2){\scriptsize $Q$}
\put(31.5,76){\scriptsize $Q^\vee$}
\put(31,2){\scriptsize $Q^\vee$}
\put(-19,48){\scriptsize $f_h^{-1}$}
\put(40,31){\scriptsize $f_h^{-\vee}$}
\put(-15,30){\scriptsize $f_h$}
\put(40,48){\scriptsize $f_h^{\vee}$}
}}
\ep
}=\quad~
\raisebox{-36pt}{
\bp(54,66)
\put(0,6){
\bild{pic31b}
\put(-40,-10){
\put(34,76){\scriptsize $Q^\vee$}
\put(34,2){\scriptsize $Q^\vee$}
\put(1.5,76){\scriptsize $Q$}
\put(1.7,2){\scriptsize $Q$}
\put(12,40){\scriptsize $L_h$}
}}
\ep
}=~\id_{Q\oti Q^\vee} \,,
\nonumber\\&&
\alpha_h^{-1}\cir \alpha_h~=\qquad
\raisebox{-40pt}{
\bp(70,110)
\put(0,4){
\bild{pic31c}
\put(-40,-10){
\put(30,86){\scriptsize $Q^\vee$}
\put(30,2){\scriptsize $Q^\vee$}
\put(-2.5,86){\scriptsize $Q$}
\put(-2,2){\scriptsize $Q$}
\put(-18,25){\scriptsize $f_h^{-1}$}
\put(40,63){\scriptsize $f_h^{-\vee}$}
\put(-14,63){\scriptsize $f_h$}
\put(40,25){\scriptsize $f_h^{\vee}$}
}}
\ep
}=\quad~~~
\raisebox{-40pt}{
\bp(47,110)
\put(0,4){
\bild{pic31d}
\put(-40,-10){
\put(30,86){\scriptsize $Q^\vee$}
\put(30,2){\scriptsize $Q^\vee$}
\put(11,86){\scriptsize $Q$}
\put(11,2){\scriptsize $Q$}
\put(-6,25){\scriptsize $f_h^{-1}$}
\put(41,63){\scriptsize $f_h^{-\vee}$}
\put(-1,63){\scriptsize $f_h$}
\put(41,25){\scriptsize $f_h^{\vee}$}
}}
\ep
}
~=~\id_{Q\oti Q^\vee} \,.\quad
\end{eqnarray}
~\\
where in particular \erf{invertmorph} and $\dimLR (L_h)=1$ is used.
\\
A bimodule isomorphism $Q\oti L_h\oti Q^\vee\To {}_{\id}A_{\alpha_h}$ is now given by 
  \be
  F_h:=\, \id_Q \oti ((\tilde d_{L_h}\oti\id_{Q^\vee} )\circ
  (\id_{L_h}\Oti\, f_h^\vee )) \,.
  \ee
First we see that $F_h$ is invertible with inverse
$F_h^{-1} \eq \id_Q\oti ((\id_{L_h}\Oti\, f_h^{-\vee})\cir (b_{L_h}\oti \id_{Q^\vee}))$, 
where $f_h^{-\vee}$ stands for the dual of the inverse of $f_h$. That $F_h^{-1}$ is 
indeed inverse to $F_h$ is seen as follows:
  \begin{eqnarray}&&
F_h\cir F_h^{-1}~=\quad~
\raisebox{-23pt}{
\bp(64,64)
\bild{pic30a}
\put(-35,-10){
\put(26,67){\scriptsize $Q^\vee$}
\put(26,2){\scriptsize $Q^\vee$}
\put(-3,67){\scriptsize $Q$}
\put(-3.5,2){\scriptsize $Q$}
\put(36.5,41){\scriptsize $f_h^\vee$}
\put(35.5,27){\scriptsize $f_h^{-\vee}$}
}
\ep
}=~~~
\raisebox{-23pt}{
\bp(50,51)
\bild{pic30b}
\put(-35,-10){
\put(29.5,67){\scriptsize $Q^\vee$}
\put(30,2){\scriptsize $Q^\vee$}
\put(-4,67){\scriptsize \scriptsize $Q$}
\put(-4.4,2){\scriptsize \scriptsize $Q$}
}
\ep
}=~\id_{Q\oti Q^\vee} \,,
\nonumber\\&&
F_h^{-1}\cir F_h~=\quad
\raisebox{-35pt}{
\bp(58,93)
  \put(0,12){
\bild{pic30c}
\put(-35,-10){
\put(27,67){\scriptsize $Q^\vee$}
\put(27,2){\scriptsize $Q^\vee$}
\put(-5,67){\scriptsize $Q$}
\put(-5,2){\scriptsize $Q$}
\put(5,2){\scriptsize $L_h$}
\put(5,67){\scriptsize $L_h$}
\put(36,50.5){\scriptsize $f_h^{-\vee}$}
\put(36,18.7){\scriptsize $f_h^\vee$}
}
  }
\ep
}~~=~~~
\raisebox{-35pt}{
\bp(53,80)
  \put(0,12){
\bild{pic30d}
\put(-35,-10){
\put(25.5,67){\scriptsize $Q^\vee$}
\put(25,2){\scriptsize $Q^\vee$}
\put(2,67){\scriptsize $Q$}
\put(2,2){\scriptsize $Q$}
\put(11,2){\scriptsize $L_h$}
\put(11,67){\scriptsize $L_h$}
\put(35.5,50.5){\scriptsize $f_h^{-\vee}$}
\put(35.5,19){\scriptsize $f_h^\vee$}
}
  }
\ep
}~=~\id_{Q\oti L_h\oti Q^\vee} \,. \quad
  \end{eqnarray}
Moreover, $F_h$ clearly intertwines the left actions of $A$ on $Q\oti L_h\oti Q^\vee$ 
and on $_{\id}A_{\alpha_h}$. That it intertwines the right actions as well is verified
as follows:
  \be
\raisebox{-40pt}{\rule{0pt}{87pt}} 
\raisebox{-30pt}{
\bp(95,77)
\bild{pic09a}
\put(-40,-10){
\put(10,81){\scriptsize $Q^\vee$}
\put(31,2){\scriptsize $Q^\vee$}
\put(-12,2){\scriptsize $Q^\vee$}
\put(-42,81){\scriptsize $Q$}
\put(-43,2){\scriptsize $Q$}
\put(9,2){\scriptsize $Q$}
\put(-29,2){\scriptsize $L_h$}
\put(-22,27){\scriptsize $f_h^\vee$}
\put(40.5,29){\scriptsize $f_h^{\vee}$}
\put(14,20){\scriptsize $f_h^{-1}$}
}
\ep }
=~~~
\raisebox{-30pt}{
\bp(90,77)
\bild{pic09b}
\put(-40,-10){
\put(24,81){\scriptsize $Q^\vee$}
\put(31,2){\scriptsize $Q^\vee$}
\put(-7,2){\scriptsize $Q^\vee$}
\put(-38,81){\scriptsize $Q$}
\put(-38,2){\scriptsize $Q$}
\put(13,2){\scriptsize $Q$}
\put(-25,2){\scriptsize $L_h$}
\put(-17,27){\scriptsize $f_h^\vee$}
\put(40,53){\scriptsize $f_h^{\vee}$}
\put(19,18.5){\scriptsize $f_h^{-1}$}
}
\ep }
=~~~ \raisebox{-30pt}{
\bp(70,77)
\bild{pic09c}
\put(-40,-10){
\put(14,81){\scriptsize $Q^\vee$}
\put(34,2){\scriptsize $Q^\vee$}
\put(-4,2){\scriptsize $Q^\vee$}
\put(-29,81){\scriptsize $Q$}
\put(-29,2){\scriptsize $Q$}
\put(16,2){\scriptsize $Q$}
\put(-18,2){\scriptsize $L_h$}
\put(-11,17){\scriptsize $f_h^\vee$}
\put(27.3,53.5){\scriptsize $f_h^{\vee}$}
\put(21,18){\scriptsize $f_h^{-1}$}
}
\ep }
~~~=~~~
\raisebox{-30pt}{
\bp(92,77)
\bild{pic09d}
\put(-40,-10){
\put(18,81){\scriptsize $Q^\vee$}
\put(34,2){\scriptsize $Q^\vee$}
\put(-3,2){\scriptsize $Q^\vee$}
\put(-30,81){\scriptsize $Q$}
\put(-30,2){\scriptsize $Q$}
\put(15,2){\scriptsize $Q$}
\put(-18,2){\scriptsize $L_h$}
\put(27.3,54){\scriptsize $f_h^{\vee}$}
}
\ep }
  \ee
Here similar steps are performed as in the proof that $\alpha_h$ respects the 
product of $A$. We conclude that we have $[Q\oti L_h\oti Q^\vee]\iN \im(\PsiA)$ 
for all $h\iN H$, and thus $\Pi_{Q,Q^\vee}(H)$ is a subgroup of $\im (\PsiA)$.
On the other hand, for $g\,{\not\in}\, H$, $Q\oti L_g\oti Q^\vee\cong 
\bigoplus_{h, h^\prime\in H}L_{hgh^\prime}$ is not isomorphic to $Q\oti Q^\vee$, 
not even as an object, so that $\Pi_{Q,Q^\vee}(g)\,{\not\in}\,\im(\PsiA)$. 
Together it follows that $\im(\Pi_{Q,Q^\vee} 
\cir\PsiA)\,{=}\, H$.
\end{proof}

\begin{rem}
Similarly to the calculation that the morphisms $\alpha_h$ in \erf{alphah} are 
morphisms of algebras, one shows that they also respect the coproduct and the 
counit of $A(H)$. So in fact we have found automorphisms of Frobenius algebras.
\end{rem}

We denote the inclusion morphisms $L_h\,\To Q \eq \bigoplus_{g\in H}L_g$ by 
$e_h$ and the projections $Q \To L_h$ by $r_h$, such that $r_g\cir e_h \eq 0$ 
for $g \,{\neq}\, h$ and $r_g\cir e_g \eq \id_{L_g}$. Then $e_g\cir r_g \eq P_g$ 
is a nonzero idempotent in $\End (Q)$, and we have $\sum_{h\in H}P_h \eq \id_Q$.

\begin{lemma}\label{alphagestalt}
Let $H\,{\le}\,\pic{D}$ be a finite subgroup such that $\dim(Q)\,{\ne}\,0$ for 
$Q \,{\equiv}\, Q(H)$.
Given $g \iN H$ and an automorphism $\alpha$ of $A \eq Q\oti Q^\vee$ such that 
$\,\PsiA(\alpha) \eq [Q\oti L_g\oti Q^\vee]$, there exists a unique isomorphism 
$f_g \iN \Hom(Q\Oti L_g,Q)$ such that $r_g \cir f_g \cir (e_\Eins \oti \id_{L_g})
\eq \id_{L_g}$ and $\alpha \eq \alpha_g$ with $\alpha_g$ as in \erf{alphah}.
\end{lemma}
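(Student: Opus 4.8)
The plan is to reconstruct $f_g$ from a bimodule isomorphism realising the hypothesis $\PsiA(\alpha)=[Q\oti L_g\oti Q^\vee]$ and to use the Morita-triviality of $A=Q\oti Q^\vee$ to rigidify the choice. First I would fix a bimodule isomorphism $\Phi{:}~Q\oti L_g\oti Q^\vee\To {}_{\id}A_\alpha$. Both its source and its target carry the left $A$-action through the leftmost tensor factor $Q$ (by remark \ref{algbeispiel}, $\rho=\id_Q\oti d_Q$, and the left action on ${}_{\id}A_\alpha$ is $m=\id_Q\oti d_Q\oti\id_{Q^\vee}$). Since $\dim(Q)\,{\ne}\,0$, lemma \ref{moritacontext} supplies the Morita context $\MK\Eins A{Q^\vee}Q$, so that $Q\oti(-)$ is an equivalence from $\D$ to the category of left $A$-modules, with quasi-inverse $Q^\vee\otA(-)$; in particular it is fully faithful. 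Hence $\Phi$, being an isomorphism of left $A$-modules of the form $Q\oti V\To Q\oti W$ with $V=L_g\oti Q^\vee$ and $W=Q^\vee$, must equal $\id_Q\oti G$ for a unique isomorphism $G{:}~L_g\oti Q^\vee\To Q^\vee$ in $\D$.

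Next I would turn $G$ into $f_g$ and read off $\alpha$. Using the duality of the invertible object $L_g$, the assignment $f_g\mapsto(\tilde d_{L_g}\oti\id_{Q^\vee})\circ(\id_{L_g}\oti f_g^\vee)$ is a linear bijection $\Hom(Q\oti L_g,Q)\To\Hom(L_g\oti Q^\vee,Q^\vee)$ carrying isomorphisms to isomorphisms (its invertibility rests on \erf{invertmorph}); let $f_g$ be the preimage of $G$. By the very definition of $F_g$ in proposition \ref{mengenschnitt}, its underlying morphism of objects is then $\id_Q\oti G=\Phi$, and the computations given there (which apply verbatim to any isomorphism $f_g$) show that $F_g$ is a bimodule isomorphism $Q\oti L_g\oti Q^\vee\To{}_{\id}A_{\alpha_g}$. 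Now the right action on the target of a right-module isomorphism is determined by the isomorphism together with the right action $\varrho_s$ on its source: from $\Phi$ one gets $\varrho_\alpha=\Phi\circ\varrho_s\circ(\Phi^{-1}\oti\id_A)$ and from $F_g$ one gets $\varrho_{\alpha_g}=F_g\circ\varrho_s\circ(F_g^{-1}\oti\id_A)$. As $\Phi=F_g$ these coincide, i.e.\ $m\circ(\id_A\oti\alpha)=m\circ(\id_A\oti\alpha_g)$; precomposing with $\eta\oti\id_A$ and using $m\circ(\eta\oti\id_A)=\id_A$ yields $\alpha=\alpha_g$.

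It remains to fix the scale and prove uniqueness, which is where absolute simplicity enters. Under the equivalence $\D_{A|A}\,{\cong}\,\D$ induced by the Morita context, $Q\oti L_g\oti Q^\vee$ corresponds to $L_g$, so $\End_{A|A}(Q\oti L_g\oti Q^\vee)\cong\End(L_g)=\Bbbk\,\id$; thus $\Phi$, hence $G$, hence $f_g$, is determined up to a nonzero scalar. Rescaling $f_g\mapsto\lambda f_g$ does not change $\alpha_g$, since in \erf{alphah} the factors $f_g^{-1}$ and $f_g^\vee$ scale inversely, so $\alpha=\alpha_g$ holds for every representative. Finally I would evaluate the normalisation functional: because $Q\oti L_g\cong\bigoplus_{h\in H}L_{hg}$ and $Q\cong\bigoplus_{k\in H}L_k$ are direct sums of pairwise non-isomorphic absolutely simple objects (lemma \ref{theabovelemma}), the isomorphism $f_g$ is ``diagonal'', and $r_g\circ f_g\circ(e_\Eins\oti\id_{L_g})$ is exactly its component from the summand $L_1\oti L_g=L_g$ to the summand $L_g$ of $Q$, which is a nonzero multiple of $\id_{L_g}$. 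Hence a unique rescaling makes this composite equal to $\id_{L_g}$; this achieves the normalisation and, by the scalar rigidity just noted, simultaneously forces $f_g$ to be unique. The main obstacle is the middle step: one must show that matching $\Phi$ and $F_g$ merely at the level of the underlying object isomorphism already forces the twisting automorphisms to agree, thereby upgrading ``$\alpha$ lies in the Picard class of $\alpha_g$'' to ``$\alpha=\alpha_g$''; everything else is bookkeeping with the left and right $A$-actions.
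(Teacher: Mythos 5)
Your proposal is correct, and it follows the same three-stage skeleton as the paper's proof (left-action intertwining forces $\Phi \eq \id_Q\oti G$; right-action intertwining forces $\alpha \eq \alpha_g$; absolute simplicity of $L_g$ handles normalisation and uniqueness), but it executes two stages by genuinely different means. The paper does not invoke full faithfulness of $Q\oti(-)$: it exhibits $G$ explicitly via the normalised partial trace, defining $f_g$ through $\frac{1}{\dim(Q)}\,(d_Q\oti\id)\cir(\id_{Q^\vee}\Oti\,\Phi)\cir(\tilde b_Q\oti\id)$ — your Morita appeal is the same content, and since the paper never proves the abstract statement that $Q\oti(-)$ is fully faithful on left modules, you should record this formula to stay self-contained. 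Your real divergence is the middle step: the paper pushes the right-intertwining condition through a chain of graphical moves (dualising, composing with $f_g^{-1}\oti\id_{L_g^\vee}\oti\id_{Q^\vee}$, using $\dimR(L_g)\eq 1$) to land directly on \erf{alphah}, whereas you observe that the computations of proposition \ref{mengenschnitt} apply to an arbitrary isomorphism $f_g$ — which is true, as they use only invertibility of $f_g$ and $\dimLR(L_h)\eq 1$ — so that $F_g$ is a bimodule isomorphism onto ${}_{\id}A_{\alpha_g}$, and then transport the right action through $\Phi\eq F_g$ and cancel against $m\cir(\eta\oti\id_A)$ to conclude $\alpha\eq\alpha_g$; this is a clean conceptual shortcut that avoids redoing any diagrams. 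For uniqueness the paper composes $\alpha_g\eq\alpha_g'$ with $f_g\oti\id_{Q^\vee}$ and takes a partial trace over $Q$ to extract $f_g\eq\lambda f_g'$, while you get the same scalar ambiguity from $\dimf\Hom_{A|A}(Q\oti L_g\oti Q^\vee,{}_{\id}A_\alpha)\eq 1$, which rests on the equivalence $\D_{\!A|A}\,{\simeq}\,\D$ induced by the Morita context together with absolute simplicity of $L_g$; this is valid, and your remark that any candidate $f_g'$ feeds back into a bimodule isomorphism $F_g'$ is exactly what is needed for the torsor argument to cover all candidates. Your nonvanishing argument for $r_g\cir f_g\cir(e_\Eins\oti\id_{L_g})$ (diagonality of an isomorphism between direct sums of pairwise non-isomorphic simples) matches the paper's. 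In sum: the paper's proof is a self-contained diagrammatic computation, while yours trades those computations for standard Morita facts and transport of structure — shorter and more conceptual, at the cost of leaning on module-category equivalences that the paper itself only uses at the level of Picard groups.
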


\begin{proof}
We start by proving existence.
Let $\varphi_g{:}~ Q\oti L_g\oti Q^\vee\longcongTo {}_{\id}A_\alpha$ be an 
isomorphism of bimodules. As a first step we show that $\varphi_g \eq \id_Q\Oti\, h$ 
for some morphism $h{:}\ L_g\oti Q^\vee\congTo Q^\vee$:
  \be
\raisebox{-35pt}{
\bp(48,66)
   \put(0,10){
\bild{pic65a}
\put(-40,-10){
\put(24,60){\scriptsize $Q^\vee$}
\put(35,2){\scriptsize $Q^\vee$}
\put(9,60){\scriptsize $Q$}
\put(-1,2){\scriptsize $Q$}
\put(17,2){\scriptsize $L_g$}
\put(2,32){\scriptsize $\varphi_g$}
   }
}
\ep
}~~=~\frac{1}{\dim (Q)} \quad
\raisebox{-35pt}{
\bp(35,53)
   \put(0,10){
\bild{pic65b}
\put(-40,-10){
\put(23,60){\scriptsize $Q^\vee$}
\put(36,2){\scriptsize $Q^\vee$}
\put(13,60){\scriptsize $Q$}
\put(7,2){\scriptsize $Q$}
\put(24,2){\scriptsize $L_g$}
\put(31,42){\scriptsize $\varphi_g$}
   }
}
\ep
}~~=~\frac{1}{\dim (Q)}\quad
\raisebox{-35pt}{
\bp(42,50)
   \put(0,10){
\bild{pic65c}
\put(-40,-10){
\put(26,60){\scriptsize $Q^\vee$}
\put(36,2){\scriptsize $Q^\vee$}
\put(-1,60){\scriptsize $Q$}
\put(-1,2){\scriptsize $Q$}
\put(22,2){\scriptsize $L_g$}
\put(39.5,36){\scriptsize $\varphi_g$}
   }
}
\ep }
  \ee
Here in the first step we just inserted the dimension of $Q$, using that it is 
nonzero, and the second step is the statement that $\varphi_g$ intertwines the 
left action of $A$ on $Q\oti L_g\oti Q^\vee$ and on ${}_{\id}A_\alpha$.
Note that upon setting
  \be
f_g~:=~\frac{1}{\dim(Q)}~~
\raisebox{-30pt}{
\bp(40,63)
\put(0,10){
 \bild{pic66}
 \put(-40,-10){
 \put(35.4,60){\scriptsize $Q$}
 \put(-11,2){\scriptsize $Q$}
 \put(16,2){\scriptsize $L_g$}
 \put(24,38){\scriptsize $\varphi_g$}
 }
}
\ep }
  \ee
this amounts to $\varphi_g \eq \id_Q {\Oti}\, ((\tilde d_{L_g}{\Oti}\,\id_{Q^\vee} )\cir
(\id_{L_g}\Oti\, f_g^\vee ))$, as in proposition \ref{mengenschnitt}.
\\[3pt]
Similarly the condition that $\varphi_g$ intertwines the right action of $A$ on 
$Q\oti L_g\oti Q^\vee$ and ${}_{\id}A_\alpha$ means that
  \be
\raisebox{-45pt}{
\bp(90,90)
  \put(0,14){
\bild{pic67a}
\put(-40,-10){
\put(30,80){\scriptsize $Q^\vee$}
\put(34,2){\scriptsize $Q^\vee$}
\put(2,2){\scriptsize $Q^\vee$}
\put(-27,2){\scriptsize $Q$}
\put(-26.5,80){\scriptsize $Q$}
\put(24,2){\scriptsize $Q$}
\put(-14,2){\scriptsize $L_g$}
\put(-7.7,26.5){\scriptsize $f_g^\vee$}
\put(41,28){\scriptsize $\alpha$}
}
  }
\ep
}=\quad~~
\raisebox{-45pt}{
\bp(75,70)
  \put(0,14){
\bild{pic67b}
\put(-40,-10){
\put(18,80){\scriptsize $Q^\vee$}
\put(34,2){\scriptsize $Q^\vee$}
\put(-4,2){\scriptsize $Q^\vee$}
\put(-29.5,2){\scriptsize $Q$}
\put(-29,80){\scriptsize $Q$}
\put(15,2){\scriptsize $Q$}
\put(-18,2){\scriptsize $L_g$}
\put(27,53.5){\scriptsize $f_g^\vee$}
}
  }
\ep }
  \ee
and this is equivalent to equality of the same pictures with the identity 
morphisms at the left sides removed.
Applying duality morphisms to both sides of the resulting equality we obtain
  \be
\raisebox{-37pt}{
\bp(46,76)
  \put(0,12){
\bild{pic69a}
\put(-40,-10){
\put(35,68){\scriptsize $Q^\vee$}
\put(29,2){\scriptsize $Q^\vee$}
\put(12,68){\scriptsize $Q$}
\put(17,2){\scriptsize $Q$}
\put(22,68){\scriptsize $L_g^\vee$}
\put(2.3,47.5){\scriptsize $f_g$}
\put(10.5,21.2){\scriptsize $\alpha$}
}
  }
\ep
}=\quad
\raisebox{-37pt}{
\bp(35,50)
  \put(0,12){
\bild{pic69b}
\put(-40,-10){
\put(35,68){\scriptsize $Q^\vee$}
\put(25,2){\scriptsize $Q^\vee$}
\put(13,68){\scriptsize $Q$}
\put(13,2){\scriptsize $Q$}
\put(22,68){\scriptsize $L_g^\vee$}
\put(36,33.7){\scriptsize $f_g^\vee$}
}
  }
\ep }
  \ee
Next we apply the morphism $(\id_Q\oti\tilde d_{L_g}\oti\id_{Q^\vee})\cir 
(f_g^{-1}\oti \id_{L_g^\vee}\oti\id_{Q^\vee})$, leading to
  \be
\raisebox{-32pt}{
\bp(48,63)
  \put(0,12){
\bild{pic70a}
\put(-40,-10){
\put(35,57){\scriptsize $Q^\vee$}
\put(25,2){\scriptsize $Q^\vee$}
\put(5,57){\scriptsize $Q$}
\put(14,2){\scriptsize $Q$}
\put(9.5,19.7){\scriptsize $\alpha$}
}
  }
\ep
}=\qquad
\raisebox{-32pt}{
\bp(35,50)
  \put(0,12){
\bild{pic70b}
\put(-40,-10){
\put(31,58){\scriptsize $Q^\vee$}
\put(31,2){\scriptsize $Q^\vee$}
\put(-3,58){\scriptsize $Q$}
\put(-2,2){\scriptsize $Q$}
\put(-18,29.5){\scriptsize $f_g^{-1}$}
\put(41.2,29){\scriptsize $f_g^\vee$}
}
  }
\ep }
  \ee
Using also that by lemma \ref{charakter}\,(ii) we have $\dimR(L_g) \eq 1$, this 
shows that $\alpha \eq \alpha_g$ with $\alpha_g$ as in \erf{alphah}. Since $L_g$ 
is absolutely simple (see lemma \ref{theabovelemma}\,(iv)),
we have $r_g \cir f_g \cir (e_\Eins \oti \id_{L_g}) \eq \xi\, \id_{L_g}$ for some 
$\xi \iN \Bbbk$. As $r_g$ is injective and $f_g$ is an isomorphism,
$r_g \cir f_g{:}~ Q \oti L_g \To L_g$ is nonzero. But this morphism can only be 
nonvanishing on the direct summand $\Eins$ of $Q$, and hence also 
$r_g \cir f_g \cir (e_\Eins \oti \id_{L_g}) \,{\neq}\, 0$, i.e.\ 
$\xi \iN \Bbbk^\times$. Finally note that $\alpha$ does not change if we replace 
$f_g$ by a non\-ze\-ro multiple of $f_g$; hence after suitable rescaling
$f_g$ obeys both $\alpha \eq \alpha_g$ and 
$r_g \cir f_g \cir (e_\Eins \oti \id_{L_g}) \eq \id_{L_g}$, thus proving existence.
\\[.3em]
To show uniqueness, suppose that there is another isomorphism 
$f'_g{:}~ Q \oti L_g \To Q$ such that $\alpha \eq \alpha'_g$ (where 
$\alpha'_g$ is given by \erf{alphah} with $f'_g$ instead of $f_g$) and 
$r_g \cir f_g' \cir (e_\Eins \oti \id_{L_g}) \eq \id_{L_g}$. Composing both sides 
of the equality $\alpha_g \eq \alpha'_g$ with $f_g \oti \id_{Q^\vee}$ from the 
right and taking a partial trace over $Q$ 
of the resulting morphism $Q \oti L_g \oti Q^\vee \To Q \oti Q^\vee$ gives
  \be
  \begin{array}{l}
  (d_Q \oti \id_{Q^\vee}) \circ 
  (\id_{Q^\vee} \Oti\, \alpha_g) \circ
  (\id_{Q^\vee} \Oti\, f_g \oti \id_{Q^\vee}) \circ
  (\tilde b_Q \oti \id_{L_g} \Oti\, \id_{Q^\vee})
  \\{}\\[-.8em]
  \hspace*{5.2em} =
  (d_Q \oti \id_{Q^\vee}) \circ 
  (\id_{Q^\vee} \Oti\, \alpha_g') \circ
  (\id_{Q^\vee} \Oti\, f_g \oti \id_{Q^\vee}) \circ
  (\tilde b_Q \oti \id_{L_g} \Oti\, \id_{Q^\vee})
  \end{array}
  \ee
Substituting the explicit form of $\alpha_g$ and $\alpha'_g$, and using that
$\dim(Q) \,{\neq}\, 0$ and that $L_g$ is absolutely simple, one finds that
$f_g \eq \lambda\, f_g'$ for some $\lambda \iN \Bbbk$. The normalisation conditions
$r_g \cir f_g \cir (e_\Eins \oti \id_{L_g})$ $=$ $\id_{L_g}$ and
$r_g \cir f_g' \cir (e_\Eins \oti \id_{L_g}) \eq \id_{L_g}$ then force
$\lambda \eq 1$, proving uniqueness.
\end{proof}

The construction of the automorphisms $\alpha_h$ presented in proposition 
\ref{mengenschnitt} and lemma \ref{alphagestalt} still depends on the choice of 
isomorphisms $f_h$ or $\varphi_h$. As each such automorphism gets mapped to 
$[Q\oti L_h\oti Q^\vee]$, due to exactness of the sequence \erf{sequenz} 
different choices of $f_h$ lead to automorphisms which differ only by inner 
automorphisms.  On the other hand, 
the mapping $h \,{\mapsto}\, \alpha_h$ need not be a homomorphism of groups 
for any choice of the isomorphisms $f_h$. 

In the following we will formulate necessary and sufficient conditions 
that $H\,{\le}\,\pic{D}$ must satisfy for the assignment $h \,{\mapsto}\, \alpha_h$
to yield a group homomorphism from $H$ to $\Aut (A)$. Recall that
$L_g \oti L_h \,{\cong}\, L_{gh}$. Thus by lemma \ref{theabovelemma}\,(iv) the spaces 
$\Hom (L_g\oti L_h, L_{gh})$ are one-dimensional (but there is no canonical choice 
of an isomorphism to the ground field $\Bbbk$). For each pair $g, h\iN \pic{D}$ we
select a basis isomorphism $_gb_h \iN \Hom (L_g\oti L_h, L_{gh})$.
We denote their inverses by $^gb^h \iN \Hom (L_{gh}, L_g\oti L_h)$, i.e.\
$^gb^h\cir {}_gb_h \eq \id_{L_g\otimes L_h}$ and $_gb_h\cir {}^gb^h \eq \id_{L_{gh}}$. 
For $g \eq 1$ we take ${}_1b_g$ and ${}_gb_1$ to be the identity, which is 
possible by the assumed strictness of $\D$.

For any triple $g_1,g_2,g_3 \iN \Pic(\D)$      
the collection $\{{}_gb_h\}$ of morphisms provides us with two bases of the 
one-dimensional space $\Hom (L_{g_1}\Oti L_{g_2}\Oti L_{g_3}, L_{g_1g_2g_3})$, 
namely with $_{g_1g_2}b_{g_3}\cir ({}_{g_1}b_{g_2}\oti \id_{L_{g_3}})$ as well as
$_{g_1}b_{g_2g_3}\cir (\id_{L_{g_1}} {\otimes}\, {}_{g_2}b_{g_3})$. 
These differ by a nonzero scalar $\psi (g_1, g_2, g_3) \iN \Bbbk$:
  \be\label{defkozykel}
  _{g_1g_2}b_{g_3}\circ (_{g_1}b_{g_2}\oti \id_{L_{g_3}})
  = \psi (g_1, g_2, g_3) \, {}_{g_1}b_{g_2g_3}\circ
  (\id_{L_{g_1}} \!{\otimes}\, {}_{g_2}b_{g_3}) \,.
  \ee
The pentagon axiom for the associativity constraints of $\D$ implies that $\psi$ 
is a three-cocycle on the group $\Pic(\D)$ with values in $\Bbbk^\times$
(see e.g.\ appendix E of \cite{mose3}, chapter 7.5 of \cite{FRke},
or \cite{yama7}). Any other choice 
of bases leads to a cohomologous three-co\-cycle.  Observe that by taking ${}_1b_h$ 
and ${}_hb_1$ to be the identity on $L_h$ the cocycle $\psi$ is normalised, i.e.\ 
satisfies $\psi (g_1, g_2, g_3) \eq 1$ as soon as one of the $g_i$ equals $1$.

\begin{lemma}\label{basisfrob}
For $g,h,k \iN \pic{D}$, the bases introduced above obey the relation
  \be
\raisebox{-24pt}{
\bp(51,53)
  \put(0,4){
\bild{pic10a}
\put(-40,-10){
\put(21,57){\scriptsize $L_k$}
\put(21,2){\scriptsize $L_h$}
\put(36,57){\scriptsize $L_{k^{-1}\!g}$}
\put(36,2){\scriptsize $L_{h^{-1}\!g}$}
\put(4,34){\scriptsize ${}^kb^{k^{-1}\!g}$}
\put(38,23){\scriptsize ${}_h^{}b_{h^{-1}\!g}$}
}
  }
\ep
}=~ \dsty\frac1{\psi(h,h^{-1}k,k^{-1}g)}\qquad~
\raisebox{-24pt}{
\bp(35,50)
  \put(0,4){
\bild{pic10b}
\put(-40,-10){
\put(18,57){\scriptsize $L_k$}
\put(18,2){\scriptsize $L_h$}
\put(33,57){\scriptsize $L_{k^{-1}\!g}$}
\put(33,2){\scriptsize $L_{h^{-1}\!g}$}
\put(-6.5,39){\scriptsize ${}_h^{}b_{h^{-1}\!g}$}
\put(40.5,19){\scriptsize ${}^{h^{-1}\!k}b^{k^{-1}\!g}$}
}
  }
\ep }
  \ee
\end{lemma}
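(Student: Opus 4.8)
The plan is to read both diagrams as composites of the basis morphisms ${}_gb_h$ and their inverses ${}^gb^h$, and to identify the asserted equality as a single instance of the defining cocycle relation \erf{defkozykel}. Reading from bottom to top, the left-hand diagram is the composite
\[
  {}^kb^{k^{-1}g} \cir {}_hb_{h^{-1}g} \,:\quad
  L_h \oti L_{h^{-1}g} \,\longrightarrow\, L_g \,\longrightarrow\, L_k \oti L_{k^{-1}g} \,,
\]
which first fuses $L_h$ and $L_{h^{-1}g}$ into $L_g$ and then splits $L_g$ into $L_k \oti L_{k^{-1}g}$. The right-hand diagram is the composite
\[
  ({}_hb_{h^{-1}k} \oti \id_{L_{k^{-1}g}}) \cir (\id_{L_h} \Oti\, {}^{h^{-1}k}b^{k^{-1}g}) \,,
\]
which first splits $L_{h^{-1}g}$ into $L_{h^{-1}k} \oti L_{k^{-1}g}$ and then fuses $L_h$ with $L_{h^{-1}k}$ into $L_k$.

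First I would specialise \erf{defkozykel} to the triple $(g_1,g_2,g_3) \eq (h,\,h^{-1}k,\,k^{-1}g)$. Since then $g_1g_2 \eq k$, $g_2g_3 \eq h^{-1}g$ and $g_1g_2g_3 \eq g$, the cocycle relation becomes
\[
  {}_kb_{k^{-1}g} \cir ({}_hb_{h^{-1}k} \oti \id_{L_{k^{-1}g}})
  = \psi(h,h^{-1}k,k^{-1}g)\; {}_hb_{h^{-1}g} \cir (\id_{L_h} \Oti\, {}_{h^{-1}k}b_{k^{-1}g}) \,.
\]
Next I would cancel the two fusion morphisms that do not appear in the claim by composing with their inverses, using $^gb^h\cir{}_gb_h \eq \id$ and $_gb_h\cir{}^gb^h \eq \id$. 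Composing from the left with ${}^kb^{k^{-1}g}$ removes ${}_kb_{k^{-1}g}$ and turns the right-hand side into $\psi(h,h^{-1}k,k^{-1}g)\,{}^kb^{k^{-1}g}\cir{}_hb_{h^{-1}g}\cir(\id_{L_h}\Oti\,{}_{h^{-1}k}b_{k^{-1}g})$; composing from the right with $\id_{L_h}\Oti\,{}^{h^{-1}k}b^{k^{-1}g}$ then removes ${}_{h^{-1}k}b_{k^{-1}g}$, because ${}_{h^{-1}k}b_{k^{-1}g}\cir{}^{h^{-1}k}b^{k^{-1}g} \eq \id_{L_{h^{-1}g}}$. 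Dividing by the nonzero scalar $\psi(h,h^{-1}k,k^{-1}g)$ then yields exactly the claimed equality.

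The invertibility of the objects $L_g$ and the resulting one-dimensionality of all the relevant morphism spaces (lemma \ref{theabovelemma}\,(iv)) guarantee that no correction terms arise, so the entire statement reduces to the one instance of the pentagon relation above together with the cancellation of mutually inverse fusion and splitting morphisms. I expect the only real difficulty to be the graphical bookkeeping --- correctly reading off each diagram as a composite and checking that the group labels $h,\,h^{-1}k,\,k^{-1}g$ line up under the substitution. In the purely diagrammatic formulation the same argument is carried out by inserting the identity $\id_{L_{h^{-1}g}} \eq {}_{h^{-1}k}b_{k^{-1}g}\cir{}^{h^{-1}k}b^{k^{-1}g}$ on the $L_{h^{-1}g}$-strand of the left diagram and then using the associativity constraint to slide the resulting trivalent vertices past one another, which produces the factor $\psi(h,h^{-1}k,k^{-1}g)^{-1}$.
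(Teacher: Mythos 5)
Your proposal is correct and is essentially the paper's own proof: the paper likewise establishes the identity by inserting $\id_{L_{h^{-1}g}} \eq {}_{h^{-1}k}b_{k^{-1}g}\cir{}^{h^{-1}k}b^{k^{-1}g}$ on the $L_{h^{-1}g}$-strand, invoking \erf{defkozykel} for the triple $(h,h^{-1}k,k^{-1}g)$, and cancelling ${}^kb^{k^{-1}g}\cir{}_kb_{k^{-1}g}$ --- precisely the diagrammatic rephrasing you give in your closing paragraph, of which your algebraic version (composing the cocycle instance with inverse basis morphisms on both sides) is the same computation in a different order. Your reading of the right-hand diagram as $({}_hb_{h^{-1}k}\oti\id_{L_{k^{-1}g}})\cir(\id_{L_h}\oti{}^{h^{-1}k}b^{k^{-1}g})$ is the correct, type-consistent one.
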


\begin{proof}
In pictures:
  \be
\raisebox{-35pt}{
\bp(47,82)
\bild{pic11a}
\put(-40,-10){
\put(35,89){\scriptsize $L_{k^{-1}g}\!$}
\put(35,2){\scriptsize $L_{h^{-1}g}\!$}
\put(21,89){\scriptsize $L_k$}
\put(21,2){\scriptsize $L_h$}
\put(37,33){\scriptsize ${}_h^{}b_{h^{-1}\!g}$}
\put(2,56){\scriptsize ${}^kb^{k^{-1}\!g}$}
}
\ep
}=\qquad~~
\raisebox{-35pt}{
\bp(70,76)
\bild{pic11c}
\put(-40,-10){
\put(37,89){\scriptsize $L_{k^{-1}\!g}$}
\put(35,2){\scriptsize $L_{h^{-1}\!g}$}
\put(21,89){\scriptsize $L_k$}
\put(21,2){\scriptsize $L_h$}
\put(41,40){\scriptsize ${}_{h^{-1}\!k}b_{k^{-1}g\!}$}
\put(38,69){\scriptsize ${}^kb^{k^{-1}g}$}
\put(41,18){\scriptsize ${}^{h^{-1}\!k}b^{k^{-1}\!g}$}
\put(3,57){\scriptsize ${}_h^{}b_{h^{-1}\!g}$}
}
\ep }
  =~ \frac1{\psi}\qquad~
\raisebox{-45pt}{
\bp(68,76)
  \put(0,10){
\bild{pic11d}
\put(-40,-10){
\put(36,89){\scriptsize $L_{k^{-1}\!g}$}
\put(33,2){\scriptsize $L_{h^{-1}\!g}$}
\put(21,89){\scriptsize $L_k$}
\put(18,2){\scriptsize $L_h$}
\put(-4.5,37){\scriptsize ${}_h^{}b_{h^{-1}\!k}$}
\put(41,16){\scriptsize ${}^{h^{-1}k}b^{k^{-1}\!g}$}
\put(38,72){\scriptsize ${}^kb^{k^{-1}\!g}$}
\put(38,56){\scriptsize ${}_k^{}b_{k^{-1}\!g}$}
}
  }
\ep
 } =~ \frac1{\psi}\qquad~
\raisebox{-45pt}{
\bp(62,70)
  \put(0,10){
\bild{pic11e}
\put(-40,-10){
\put(33,89){\scriptsize $L_{k^{-1}\!g}$}
\put(33,2){\scriptsize $L_{h^{-1}\!g}$}
\put(18,89){\scriptsize $L_k$}
\put(18,2){\scriptsize $L_h$}
\put(-5,59){\scriptsize ${}_h^{}b_{h^{-1}\!k}$}
\put(41,32){\scriptsize ${}^{h^{-1}\!k}b^{k^{-1}\!g}$}
}
  }
\ep }
   \ee
where in the second step we inserted the definition of $\psi$
and abbreviated $\psi \,{\equiv}\, \psi(h,h^{-1}k,k^{-1}g)$.
\end{proof}

\begin{Def}\label{trivialisation} 
Given the normalised cocycle $\psi$ on $\Pic(\D)$, a normalised two-cochain 
$\omega$ on $H$ with values in $\Bbbk^\times$ is called a {\em trivialisation\/} 
of $\psi$ on $H$ iff it satisfies $\rmd\omega \eq \psi\big|_H^{}$.
\end{Def}

\begin{prop}\label{trivialisierungsalgebra}
Given a finite subgroup $H$ of $\,\Pic(\D)$ and a function 
$\omega{:}~H\,{\times}\, H\To\Bbbk^\times$, define $Q \,{:=}\, \bigoplus_{h\in H}L_h$ and 
  \be \label{Q-morphs}
  \bearlll ~\\[-1.9em]
  m \!\!&\equiv\, m(H,\omega):=\sum_{g,h\in H}\omega(g,h)\, e_{gh}\cir{}_gb_h\cir(r_g\oti r_h)
  &\iN \Hom(Q\oti Q,Q) \,,
  \\[3pt]
  \eta \!\!&\equiv\,\eta(H,\omega):=e_1&\iN\Hom(\Eins, Q)\,,
  \\[3pt]
  \Delta \!\!&\equiv\,\Delta(H,\omega):={|H|}^{-1}\sum_{g,h\in H}{\omega(g,h)}^{-1}\,
  (e_g\oti e_h)\cir\, {}^gb^h\cir r_{gh}&\iN\Hom(Q,Q\oti Q)\,,
  \\[3pt]
  \epsilon \!\!&\equiv\,\epsilon(H,\omega):=|H|\ r_1&\iN\Hom(Q,\Eins)\,.
  \eear
  \ee
Then the following statements are equivalent:
\def\leftmargini{2.1em}
\begin{itemize}
\item[\rm (i)]   $\,\omega$ is a trivialisation of $\psi$ on $H$.
\item[\rm (ii)]  $\,(Q,m,\eta) $ is an associative unital algebra.
\item[\rm (iii)] $\,(Q,\Delta,\epsilon)$ is a coassociative counital coalgebra.
\end{itemize}
Moreover, if any of these equivalent conditions holds, then $Q(H,\omega) \,{\equiv}\, 
(Q,m,\Delta,\eta,\epsilon)$ is a special Frobenius algebra with 
$\,m\cir\Delta \eq \id_Q$ and $\,\epsilon\cir\eta \eq |H|\id_\Eins$.
\end{prop}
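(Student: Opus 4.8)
The plan is to evaluate every structural morphism on the individual summands $L_g$ of $Q$ and thereby reduce all assertions to scalar identities among the chosen basis isomorphisms ${}_gb_h$ and their inverses ${}^gb^h$. Everything rests on the two matrix-element formulas
\[
  r_{ab}\circ m\circ(e_a\oti e_b)=\omega(a,b)\,{}_ab_b
  \qquad\text{and}\qquad
  (r_a\oti r_b)\circ\Delta\circ e_{ab}=|H|^{-1}\,\omega(a,b)^{-1}\,{}^ab^b ,
\]
both immediate from $r_ge_h=\delta_{g,h}\id_{L_h}$ and the definitions \erf{Q-morphs}; all remaining matrix elements vanish because $\Hom(L_a,L_b)=0$ for $a\neq b$ by Schur's lemma.

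To prove (i)$\Leftrightarrow$(ii) I would evaluate $m\circ(m\oti\id_Q)$ and $m\circ(\id_Q\oti m)$ on $e_a\oti e_b\oti e_c$. Applying the matrix-element formula twice, each side becomes a scalar times a basis vector of the one-dimensional space $\Hom(L_a\oti L_b\oti L_c,L_{abc})$; the two relevant basis vectors differ exactly by the factor $\psi(a,b,c)$ according to \erf{defkozykel}. Equating scalars yields
\[
  \omega(a,b)\,\omega(ab,c)\,\psi(a,b,c)=\omega(b,c)\,\omega(a,bc),
\]
which is precisely $\rmd\omega=\psi\big|_H$. Separately, unitality of $\eta=e_1$ is equivalent to $\omega(1,h)=\omega(h,1)=1$, i.e.\ to normalisation of $\omega$; hence $(Q,m,\eta)$ is an associative unital algebra iff $\omega$ is a normalised trivialisation of $\psi$ in the sense of definition \ref{trivialisation}. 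The equivalence (i)$\Leftrightarrow$(iii) is the mirror argument: applying the inverse of \erf{defkozykel} to the two composites built from $\Delta$ produces the identical scalar condition, and counitality of $\epsilon=|H|\,r_1$ again amounts to normalisation of $\omega$.

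For the final claim, assume (i)--(iii). The two scalar normalisations are immediate: $\epsilon\circ\eta=|H|\,r_1\circ e_1=|H|\,\id_\Eins$, and since ${}_gb_h\circ{}^gb^h=\id_{L_{gh}}$ and there are exactly $|H|$ pairs $(g,h)$ with $gh=x$, one gets
\[
  m\circ\Delta\circ e_x=|H|^{-1}\sum_{gh=x}\omega(g,h)^{-1}\omega(g,h)\,e_x=e_x ,
\]
so $m\circ\Delta=\id_Q$. Thus $Q(H,\omega)$ is special with constants $\beta_Q=1$ and $\beta_\Eins=|H|\in\Bbbk^\times$ (using $\mathrm{char}\,\Bbbk=0$), once the Frobenius property is in place.

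The genuine obstacle is the Frobenius relation $\Delta\circ m=(m\oti\id_Q)\circ(\id_Q\oti\Delta)$, and this is exactly what lemma \ref{basisfrob} is designed to overcome. For $pq=ab$, writing $s=a^{-1}p$ and $t=q$ (so that $st=b$), the two sides have matrix elements $|H|^{-1}\omega(a,b)\,\omega(p,q)^{-1}\,{}^pb^q\circ{}_ab_b$ and $|H|^{-1}\omega(s,t)^{-1}\omega(a,s)\,({}_ab_s\oti\id_{L_t})\circ(\id_{L_a}\oti{}^sb^t)$, respectively. Lemma \ref{basisfrob}, applied with $h=a$, $k=as$, $g=ab$, rewrites ${}^pb^q\circ{}_ab_b$ as $\psi(a,s,t)^{-1}({}_ab_s\oti\id_{L_t})\circ(\id_{L_a}\oti{}^sb^t)$; substituting and cancelling the common basis morphism reduces the Frobenius equality to
\[
  \psi(a,s,t)=\omega(s,t)\,\omega(a,st)\,\omega(a,s)^{-1}\,\omega(as,t)^{-1}=(\rmd\omega)(a,s,t),
\]
which holds by (i). The second half of the Frobenius axiom follows from the mirror-image application of lemma \ref{basisfrob}. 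Throughout, the only real care needed is the bookkeeping of the $\omega$- and $\psi$-factors so that the trivialisation identity $\rmd\omega=\psi\big|_H$ surfaces in precisely the required form.
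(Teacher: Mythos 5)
Your proposal is correct and follows exactly the route the paper takes: it reduces (i)--(iii) to the scalar identity $\omega(a,b)\,\omega(ab,c)\,\psi(a,b,c)=\omega(b,c)\,\omega(a,bc)$ via matrix elements (the ``direct computation using only the definitions'' that the paper declines to spell out), and it obtains the Frobenius property from lemma \ref{basisfrob}, which is precisely the paper's stated mechanism. Your bookkeeping checks out throughout, including the substitution $s=a^{-1}p$, $t=q$ and the application of lemma \ref{basisfrob} with $h=a$, $k=as$, $g=ab$, so your write-up is a faithful and complete expansion of the paper's terse proof.
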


\begin{proof}
The equivalence of conditions (i)\,--\,(iii) follows by direct computation using 
only the definitions; we refrain from giving the details. The Frobenius property 
then follows with the help of lemma \ref{basisfrob}.
\end{proof}

\begin{lemma}
Let $\omega$ be a trivialisation of $\psi$ on $H \,{\le}\, \Pic(\D)$ and let
$Q\,{\equiv}\,\, Q(H,\omega)$ be the special Frobenius algebra defined 
in proposition \ref{trivialisierungsalgebra}.
Then $Q$ is symmetric iff $\,\dim(Q) \,{\neq}\, 0$.
\end{lemma}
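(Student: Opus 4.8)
The plan is to treat the two implications separately; the forward direction is immediate, while the converse carries the computational content.

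For the \emph{only if} direction, recall from Proposition \ref{trivialisierungsalgebra} that $Q\,{\equiv}\,Q(H,\omega)$ is already special Frobenius, with $m\cir\Delta\eq\id_Q$ and $\epsilon\cir\eta\eq|H|\id_\Eins$; in the notation of Definition \ref{algebrendef}(ii) this means $\beta_Q\eq1$ and $\beta_\Eins\eq|H|$. If in addition $Q$ were symmetric, then by the relation for symmetric special Frobenius algebras recalled in Remark \ref{frobdim} we would have $\dim(Q)\eq\beta_Q\,\beta_\Eins\eq|H|\,{\neq}\,0$. Hence symmetry forces $\dim(Q)\,{\neq}\,0$.

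For the \emph{if} direction, assume $\dim(Q)\,{\neq}\,0$. Since $Q\,{\cong}\,Q^\vee$ (Remark \ref{qselbstdual}) we have $\dimL(Q)\eq\dimR(Q)\eq\dim(Q)\,{\neq}\,0$, so Lemma \ref{charakter}(ii) gives $\dimL(L_h)\eq\dimR(L_h)\eq1$ for every $h\iN H$. By \erf{invertmorph} this makes all duality morphisms of the $L_h$ behave as in the dimension-one case, e.g.\ $b_{L_h}\cir\tilde d_{L_h}\eq\id$ and $\tilde b_{L_h}\cir d_{L_h}\eq\id$; these identities will let me straighten bent $L_h$-lines without picking up scalar factors. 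I would then compute the two morphisms $\Phi_1,\Phi_2\iN\Hom(Q,Q^\vee)$ of \erf{Phi1Phi2} blockwise along $Q\eq\bigoplus_g L_g$ and $Q^\vee\eq\bigoplus_g L_g^\vee$. The pairing $\epsilon\cir m$ is read off from \erf{Q-morphs}: since $r_1\cir e_{gh}$ is nonzero only for $h\eq g^{-1}$, one gets $\epsilon\cir m\eq|H|\sum_{g}\omega(g,g^{-1})\,{}_gb_{g^{-1}}\cir(r_g\oti r_{g^{-1}})$, so this pairing couples only $L_g$ with $L_{g^{-1}}$. Consequently both $\Phi_1$ and $\Phi_2$ are block-diagonal, mapping the summand $L_g$ into $L_{g^{-1}}^\vee$, and on this block $\Phi_1$ restricts to $|H|\,\omega(g,g^{-1})\,\lambda_g^{(1)}$ and $\Phi_2$ to $|H|\,\omega(g^{-1},g)\,\lambda_g^{(2)}$, where $\lambda_g^{(1)}$ is ${}_gb_{g^{-1}}$ with its $L_{g^{-1}}$-leg bent up by $b_{L_{g^{-1}}}$ and $\lambda_g^{(2)}$ is ${}_{g^{-1}}b_g$ with its $L_{g^{-1}}$-leg bent up by $\tilde b_{L_{g^{-1}}}$. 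As $\Hom(L_g,L_{g^{-1}}^\vee)$ is one-dimensional (both objects being absolutely simple and mutually isomorphic, Lemma \ref{theabovelemma}(iv)), proving $\Phi_1\eq\Phi_2$ amounts to the single scalar identity $\omega(g,g^{-1})\,\lambda_g^{(1)}\eq\omega(g^{-1},g)\,\lambda_g^{(2)}$ for each $g\iN H$.

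The main obstacle, and the real content, is this last scalar identity, which I expect to close by playing the cochain $\omega$ against the three-cocycle $\psi$. On the one hand, evaluating the trivialisation condition $\rmd\omega\eq\psi|_H$ of Definition \ref{trivialisation} at the triple $(g,g^{-1},g)$ and using the normalisation $\omega(g,1)\eq\omega(1,g)\eq1$ yields $\omega(g^{-1},g)/\omega(g,g^{-1})\eq\psi(g,g^{-1},g)$. On the other hand, the two bent basis morphisms are related by the mirror-image scalar: a direct duality manipulation of $\lambda_g^{(1)}$ and $\lambda_g^{(2)}$ — of exactly the type carried out in Lemma \ref{basisfrob}, reinserting the definition \erf{defkozykel} of $\psi$ to pass the associator across the caps and using the dimension-one relations from \erf{invertmorph} to straighten the $L_{g^{-1}}$-lines at no cost — should give $\lambda_g^{(2)}\eq\psi(g,g^{-1},g)^{-1}\,\lambda_g^{(1)}$. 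Substituting both facts into the target identity, the factor $\psi(g,g^{-1},g)$ cancels and the equality holds for every $g$, so $\Phi_1\eq\Phi_2$ and $Q$ is symmetric. The delicate point throughout is the bookkeeping of which associator/$\psi$-factor and which duality convention enters at each bent leg; the dimension-one hypothesis is precisely what guarantees that no stray dimension scalars survive to spoil the cancellation.
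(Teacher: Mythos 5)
Your proposal is correct, and its computational core -- reducing $\Phi_1\eq\Phi_2$ blockwise to the one-dimensional spaces $\Hom(L_g,L_{g^{-1}}^\vee)$, then resolving the resulting scalar identity by playing $\rmd\omega\eq\psi|_H^{}$ against a duality manipulation of the type in lemma \ref{basisfrob} -- is exactly the paper's argument. Where you differ is in the organisation: the paper runs a single chain of equivalences, showing that symmetry of $Q$ is equivalent to $\omega(g,g^{-1})\eq\omega(g^{-1},g)\,\psi(g^{-1},g,g^{-1})\,\dimL(L_g)$ for all $g\iN H$, hence (by the trivialisation property) to $\dimL(L_g)\eq1$ for all $g$, hence (lemma \ref{charakter}) to $\dim(Q)\,{\neq}\,0$ -- so both implications fall out at once, with no need to assume $\dimLR(L_h)\eq1$ beforehand. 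You instead split the statement: your forward direction via remark \ref{frobdim} ($\beta_Q\,\beta_\Eins\eq\dim(Q)$ with $\beta_Q\eq1$, $\beta_\Eins\eq|H|$, and $|H|\,{\neq}\,0$ in characteristic zero) is a genuinely different and slicker route than extracting it from the equivalence chain, while your converse mirrors the paper's computation with the dimension-one hypothesis imposed from the start, which simplifies the graphical bookkeeping at the cost of obscuring that the dimension factor is precisely the obstruction. One detail to verify in your bookkeeping: your claimed relation $\lambda_g^{(2)}\eq\psi(g,g^{-1},g)^{-1}\lambda_g^{(1)}$ agrees with the paper's computed ratio $\psi(g^{-1},g,g^{-1})\,\dimL(L_g)$ only after invoking $\dimL(L_g)\eq1$ together with the identity $\psi(g,g^{-1},g)\,\psi(g^{-1},g,g^{-1})\eq1$, which holds for the normalised cocycle $\psi$ by the pentagon/cocycle condition evaluated at $(g,g^{-1},g,g^{-1})$ -- this is true but was left tacit in your sketch, and is the one place where a sign-of-associator error could otherwise creep in.
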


\begin{proof}
Recall the morphisms $\Phi_1$ and $\Phi_2$ from \erf{Phi1Phi2}. Since
$Q \eq \bigoplus_{h\in H}L_h$, the condition $\Phi_1 \eq \Phi_2$ is equivalent 
to $\Phi_1\cir e_g \eq \Phi_2\cir e_g$ for all $g\iN H$. By the definition of 
the multiplication on $Q$, this amounts to
  \be
\omega(g,g^{-1})\qquad
\raisebox{-32pt}{
\bp(48,61)
  \put(0,12){
\bild{pic53a}
\put(-40,-10){
\put(35,53){\scriptsize $L_{g^{-1}}^\vee$}
\put(19,52){\scriptsize $\Eins$}
\put(13,0){\scriptsize $L_g$}
\put(-3,31){\scriptsize ${}_g^{}b_{g^{-1}}$}
}
  }
\ep
}=~\omega(g^{-1},g)~~
\raisebox{-32pt}{
\bp(40,50)
  \put(0,12){
\bild{pic53b}
\put(-40,-10){
\put(11,53){\scriptsize $L_{g^{-1}}^\vee$}
\put(32,52){\scriptsize $\Eins$}
\put(35,0){\scriptsize $L_g$}
\put(39.3,30){\scriptsize ${}_{g^{-1}}b_{g}$}
}
  }
\ep }
  \ee
which in turn, by applying duality morphisms and composing with the morphisms 
${}^gb^{g^{-1}}$, is equivalent to
  \be \label{Bild 53b}
\omega(g,g^{-1})\qquad
\raisebox{-30pt}{
\bp(32,66)
  \put(0,10){
\bild{pic53c}
\put(-40,-10){
\put(31.7,62){\scriptsize $\Eins$}
\put(31.7,2){\scriptsize $\Eins$}
\put(8.7,47){\scriptsize ${}_g^{}b_{g^{-1}}$}
\put(8.7,16){\scriptsize ${}^gb^{g^{-1}}$}
}
  }
\ep
}=~~\omega(g^{-1},g)~~
\raisebox{-30pt}{
\bp(40,52)
  \put(0,10){
\bild{pic53d}
\put(-40,-10){
\put(33.3,2){\scriptsize $\Eins$}
\put(40,35){\scriptsize ${}_{g^{-1}}b_{g}$}
\put(41.2,16){\scriptsize ${}^gb^{g^{-1}}$}
}
  }
\ep }
  \ee
The right hand side of \erf{Bild 53b} is evaluated to
  \begin{eqnarray}&&
\omega(g^{-1},g)\
\raisebox{-20pt}{
\bp(58,52)
\bild{pic54a}
\put(-40,-10){
\put(33,2){\scriptsize $\Eins$}
\put(39,35){\scriptsize ${}_{g^{-1}\!}b_g^{}$}
\put(41,17){\scriptsize ${}^{g_{\phantom:}\!\!}b^{g^{-1}}$}
}
\ep
}=~\omega(g^{-1},g)\,\psi(g^{-1},g,g^{-1})~~
\raisebox{-20pt}{
\bp(60,50)
\bild{pic54b}
\put(-40,-10){
\put(34.5,2){\scriptsize $\Eins$}
\put(41.5,26){\scriptsize ${}_{g^{-1}\!}b_{\Eins}^{}$}
\put(41.5,37){\scriptsize ${}^{\Eins\!}b^{g^{-1}}$}
}
\ep }
\nonumber\\&&\hspace*{3.3em}
=~\omega(g^{-1},g)\psi(g^{-1},g,g^{-1})~
\raisebox{-20pt}{
\bp(30,52)
\bild{pic54c}
\put(1,37){\scriptsize $L_g$}
\ep }
=~\omega(g^{-1},g)\,\psi(g^{-1},g,g^{-1})\,\dimL(L_g)\,\id_{\Eins} .\qquad
  \end{eqnarray}
The first step is an application of lemma \ref{basisfrob}, the second is due 
to our convention that the morphisms ${}^1b^g$ are chosen to be identity 
morphisms. So the condition that $Q$ is a symmetric Frobenius algebra is 
equivalent to the condition $\omega(g,g^{-1}) \eq \omega(g^{-1},g)\,
\psi(g^{-1},g,g^{-1})\,\dimL(L_g)$ for all $g\iN H$. As $\omega$ is a
trivialisation of $\psi$ and $\rmd\omega(g^{-1},g,g^{-1}) \eq \omega(g,g^{-1}) 
/ \omega(g^{-1},g)$, this is equivalent to $\dimL(L_g) \eq 1$ for all $g\iN H$.
By lemma \ref{charakter} the latter condition holds iff $\dim(Q) \,{\neq}\, 0$.
\end{proof}

\begin{Def} \label{admissible}
An \textit{admissible\/} subgroup of $\Pic(\D)$ is a finite subgroup 
$H\,{\le}\,\Pic(\D)$ such that $\dimLR(L_h) \eq 1$ for all $h\iN H$ and 
such that there exists a trivialisation $\omega$ of $\psi$ on $H$.
\end{Def}

\begin{rem}\label{trivalgebrarem}
We see that $Q(H,\omega)$ is a symmetric special Frobenius algebra if and only 
if $H$ is an admissible subgroup of $\Pic(\D)$ and $\omega$ is a trivialisation 
of $\psi$ on $H$. One can show that every structure of a special Frobenius algebra on 
the object $Q \eq \bigoplus_{h\in H}L_h$ is of the type $Q(H,\omega)$ described in 
proposition \ref{trivialisierungsalgebra} for a suitable trivialisation $\omega$ 
of $\psi$ (see \cite{fuRs9}, proposition 3.14). So giving product and coproduct 
morphisms on $Q$ is equivalent to giving a trivialisation $\omega$ of $\psi$. 
\\[.3em]
Also observe that multiplying a trivialisation $\omega$ of $\psi$ with a
two-cocycle $\gamma$ on $H$ gives another trivialisation $\omega'$ of $\psi$. One 
can show that the Frobenius algebras $Q(H,\omega)$ and $Q(H,\omega^\prime)$ are 
isomorphic as Frobenius algebras if and only if $\omega$ and $\omega^\prime$ 
differ by multiplication with an exact two-cocycle. Accordingly, in the 
sequel we call two trivialisations $\omega$ and $\omega^\prime$ for $\psi$ 
equivalent iff $\omega/\omega^\prime \eq \rmd\eta$ for some one-cochain $\eta$.
Thus if $\psi$ is trivialisable on $H$, then the equivalence classes of
trivialisations form a torsor over $H^2(H,\Bbbk^\times)$.
\end{rem}

\begin{thm}\label{gruppenschnitt}
Let $H$ be an admissible subgroup of $\pic D$, put $Q \eq Q(H)$ as in \erf{Q(H)},
and let $A\eq A(H) \eq Q \oti Q^\vee $ be the algebra defined in \erf{A(H)}. Then 
there is a bijection between
\\[3pt]
$\bullet$ \,trivialisations $\omega$ of $\psi$ on $H$ ~and
\\[3pt]
$\bullet$ \,group homomorphisms $\,\alpha{:}~ H\To \Aut (A)$ with 
$\,\Pi_{Q^\vee,Q}\cir\PsiA\cir\alpha \eq \id_H$.
\end{thm}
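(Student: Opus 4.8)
\emph{Reduction to the isomorphisms $f_g$.} The plan is to translate the asserted bijection into the language of the normalising isomorphisms of Lemma \ref{alphagestalt} and then to recognise the homomorphism property as associativity of the product \erf{Q-morphs}. Since $\Pi_{Q^\vee\!,Q}$ inverts the isomorphism $\Pi_{Q,Q^\vee}{:}~[L_g]\,{\mapsto}\,[Q\oti L_g\oti Q^\vee]$ of Proposition \ref{mengenschnitt}, the constraint $\Pi_{Q^\vee\!,Q}\cir\PsiA\cir\alpha\eq\id_H$ is equivalent to demanding $\PsiA(\alpha(g))\eq[Q\oti L_g\oti Q^\vee]$ for all $g\iN H$. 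By Lemma \ref{alphagestalt} this means that giving such a map $\alpha$ amounts to choosing, for each $g\iN H$, a normalised isomorphism $f_g\iN\Hom(Q\oti L_g,Q)$, with $\alpha(g)\eq\alpha_g$ as in \erf{alphah}. Because $Q\eq\bigoplus_{h\in H}L_h$ and each $L_h$ is absolutely simple (Lemma \ref{theabovelemma}(iv)), the space $\Hom(L_h\oti L_g,L_k)$ is nonzero only for $k\eq hg$; decomposing $f_g$ against the idempotents $e_h\cir r_h$ therefore gives $f_g\eq\sum_{h\in H}\omega(h,g)\,e_{hg}\cir{}_hb_g\cir(r_h\oti\id_{L_g})$ with scalars $\omega(h,g)\iN\Bbbk^\times$, i.e.\ exactly the family $f_g\eq m(H,\omega)\cir(\id_Q\oti e_g)$ built from the morphism $m(H,\omega)$ of \erf{Q-morphs}. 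The normalisation $r_g\cir f_g\cir(e_\Eins\oti\id_{L_g})\eq\id_{L_g}$ forces $\omega(\Eins,g)\eq1$ for all $g$. Thus maps $\alpha$ with the required property correspond bijectively to functions $\omega{:}~H\,{\times}\,H\To\Bbbk^\times$ with $\omega(\Eins,g)\eq1$.

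\emph{The homomorphism property.} It then remains to show that, under this correspondence, $\alpha$ is a group homomorphism precisely when $\omega$ is a trivialisation of $\psi$. If $\alpha$ is a homomorphism then $\alpha(\Eins)\eq\id_A$; since $\id_A$ is represented by $f_\Eins\eq\id_Q$ and the normalised representative is unique (Lemma \ref{alphagestalt}), this forces $\omega(g,\Eins)\eq1$ as well, so that $\omega$ is a normalised two-cochain. For multiplicativity I would use that $\PsiA$ and $\Pi_{Q^\vee\!,Q}$ are group homomorphisms, whence $\PsiA(\alpha_g\cir\alpha_h)\eq[Q\oti L_{gh}\oti Q^\vee]\eq\PsiA(\alpha_{gh})$. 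By the uniqueness clause of Lemma \ref{alphagestalt} the automorphisms $\alpha_g\cir\alpha_h$ and $\alpha_{gh}$ therefore coincide if and only if their normalised representing isomorphisms in $\Hom(Q\oti L_{gh},Q)$ agree, so the homomorphism property is equivalent to a single identity among the $f_g$.

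\emph{The core computation and main obstacle.} Using the explicit form \erf{alphah}, I would compose $\alpha_g$ with $\alpha_h$ in the graphical calculus: each contributes a loop carrying $L_g$, respectively $L_h$, assembled from $f_g^{\pm}$, $f_h^{\pm}$ and their duals, and I would bring the composite back into the standard shape \erf{alphah} by fusing the two loops into a single $L_{gh}$-loop. This fusion is the one genuinely computational step and the main obstacle: contracting the $f$'s against the duality morphisms and resolving the resulting $L_g\oti L_h$-strand through the bases ${}_gb_h$ introduces the comparison scalar $\psi$ of \erf{defkozykel} (this is precisely the bookkeeping packaged by Lemma \ref{basisfrob}), while the weights $\omega(h,g)$ of $m(H,\omega)$ recombine into the coboundary $\rmd\omega$. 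Matching the fused loop with $f_{gh}\eq m(H,\omega)\cir(\id_Q\oti e_{gh})$ then turns the equality $\alpha_g\cir\alpha_h\eq\alpha_{gh}$ into the condition $\rmd\omega\eq\psi\big|_H$, equivalently (by Proposition \ref{trivialisierungsalgebra}) into associativity of $m(H,\omega)$. Hence $\alpha$ is a homomorphism iff $\omega$ is a trivialisation, and the assignments $\omega\,{\mapsto}\,(g\,{\mapsto}\,\alpha_g)$ and $\alpha\,{\mapsto}\,\omega$ are mutually inverse, yielding the asserted bijection.
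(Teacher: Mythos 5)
Your proposal is correct and takes essentially the same route as the paper's own proof: you parametrise the automorphisms by the unique normalised isomorphisms $f_g$ of Lemma \ref{alphagestalt}, identify their components on the summands $L_h$ of $Q$ with the values $\omega(h,g)$ (which is exactly the paper's pair of maps $F$ and $G$, merged into one correspondence), and reduce the homomorphism property via the same loop-fusion computation --- the paper's \erf{alpha-alpha}, with $\psi$ entering through \erf{defkozykel} --- to the condition $\rmd\omega \eq \psi\big|_H^{}$. Your only (harmless, mildly streamlining) variation is to apply the uniqueness clause of Lemma \ref{alphagestalt} to the composite $\alpha_g \cir \alpha_h$, which yields the pointwise condition directly, whereas the paper first derives the ratio condition \erf{domega-psi} and then specialises $k \eq 1$ using the normalisations.
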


\begin{proof}
Denote by $\mathcal{T}$ the set of all trivialisations $\omega$ of $\psi$
on $H$, and by $\mathcal{H}$ the set of all group homomorphisms
$\alpha{:}~ H\To \Aut (A)$ satisfying $\Pi_{Q^\vee,Q}\cir\PsiA\cir\alpha \eq \id_H$.
The proof that $\mathcal{T} \,{\cong}\, \mathcal{H}$ as sets is organised in 
three steps: defining maps $F{:}~ \mathcal{T} \To \mathcal{H}$ and
$G{:}~ \mathcal{H} \To \mathcal{T}$, and showing that they are each
other's inverse.
\\[.3em]
(i)\, Let $\omega \iN \mathcal{T}$. For each $h \iN H$ define
  \be\label{T-H-proof-fh-def}
f_h ~:=~ \sum_{g\in H}\,\omega(g,h)~~
\raisebox{-37pt}{
\bp(35,75)
  \put(0,12){
\bild{pic12}
\put(-40,-10){
\put(31.3,74){\scriptsize $Q$}
\put(36,2){\scriptsize $L_h$}
\put(24,2){\scriptsize $Q$}
\put(39,38){\scriptsize ${}_gb_h$}
\put(40,57){\scriptsize $e_{gh}$}
\put(14.8,20){\scriptsize $r_{g}$}
}}
\ep
}
  \ee
Since $\omega$ takes values in $\Bbbk^\times$, these are in fact isomorphisms,
with inverse given by
  \be\label{T-H-proof-fh-1-def}
  f_h^{-1} = \sum_{g\in H}\omega(g,h)^{-1}\,
  (e_g \oti \id_{L_h}) \cir {}^gb^h \cir r_{\!gh} \,.
  \ee
Define the function $F(\omega)$ from $H$ to $\Aut(A)$ by $F(\omega){:}\
h \mapsto \alpha_h$ for $\alpha_h$ given by \erf{alphah} with $f_h$ as in 
\erf{T-H-proof-fh-def}. We proceed to show that $F(\omega) \iN \mathcal{H}$.
\\[.3em]
Abbreviate $\alpha \,{\equiv}\, F(\omega)$.
That $\Pi_{Q^\vee,Q}\cir\PsiA\cir\alpha \eq \id_H$ follows from the proof
of proposition \ref{mengenschnitt}. To see that
$\alpha(g)\cir \alpha(h) \eq \alpha(gh)$, first rewrite $\alpha(g)\cir \alpha(h)$
using \erf{T-H-proof-fh-def} and \erf{T-H-proof-fh-1-def}:
  \be
\begin{array}{r} \displaystyle
\alpha(g)\cir\alpha(h)~=
\sum_{k,l,m,n}\frac{\omega(k,g)\,\omega(l,h)}{\omega(m,g)\,\omega(n,h)}\qquad
\raisebox{-45pt}{
\bp(68,104)
\bild{pic15a}
\put(-39,-10){
\put(27,2){\scriptsize $Q^\vee$}
\put(30,108){\scriptsize $Q^\vee$}
\put(-5,108){\scriptsize $Q$}
\put(-3,2){\scriptsize $Q$}
\put(-20.5,74){\scriptsize ${}^mb^g$}
\put(-18.5,31.7){\scriptsize ${}^nb^h$}
\put(38,76){\scriptsize ${}_k\smash{b_g}^{\!\vee}$}
\put(37,33){\scriptsize ${}_l\smash{b_h}^{\!\vee}$}
\put(38.6,94){\scriptsize $\smash{r_k}^{\!\vee}$}
\put(37,48){\scriptsize $\smash{r_l}^{\!\vee}$}
\put(-21,62){\scriptsize $r_{mg}$}
\put(-19,18){\scriptsize $r_{nh}$}
\put(-17,93){\scriptsize $e_{m}$}
\put(-14,51){\scriptsize $e_{n}$}
\put(39,62){\scriptsize $\smash{e_{kg}}^{\!\vee}$}
\put(38.3,18){\scriptsize $\smash{e_{lh}}^{\!\vee}$}
\put(7,85){\scriptsize $L_{g}$}
\put(7,41){\scriptsize $L_{h}$}
}
\ep
}=~\sum_{k,m}\frac{\omega(k,g)\,\omega(kg,h)}{\omega(m,g)\,\omega(mg,h)}\qquad
\raisebox{-45pt}{
\bp(54,90)
\bild{pic15b}
\put(-40,-10){
\put(27,2){\scriptsize $Q^\vee$}
\put(30,108){\scriptsize $Q^\vee$}
\put(-5,108){\scriptsize $Q$}
\put(-3,2){\scriptsize $Q$}
\put(-20,62){\scriptsize ${}^mb^g$}
\put(-23,41){\scriptsize ${}^{mg}b^h$}
\put(39,63){\scriptsize ${}_k\smash{b_g}^{\!\vee}$}
\put(38,41){\scriptsize ${}_{kg}\smash{b_h}^{\!\vee}$}
\put(40,93){\scriptsize $\smash{r_k}^{\!\vee}$}
\put(-22,20){\scriptsize $r_{mgh}$}
\put(-17,94){\scriptsize $e_{m}$}
\put(39,18){\scriptsize $\smash{e_{kgh}}^{\!\vee}$}
\put(7,73){\scriptsize $L_{g}$}
\put(7,49.2){\scriptsize $L_{h}$}
}
\ep }
\\
\displaystyle
=~\sum_{k,m}~ \xi_{km} \qquad
\raisebox{-45pt}{
\bp(80,127)
\bild{pic15c}
\put(-40,-10){
\put(27,2){\scriptsize $Q^\vee$}
\put(30,108){\scriptsize $Q^\vee$}
\put(-5,108){\scriptsize $Q$}
\put(-4,2){\scriptsize $Q$}
\put(-17,60){\scriptsize ${}^gb^h$}
\put(-25,41){\scriptsize ${}^{m}b^{gh}$}
\put(37,60){\scriptsize ${}_g\smash{b_h}^{\!\vee}$}
\put(38,41){\scriptsize ${}_{k}\smash{b_{gh}}^{\!\vee}$}
\put(40,94){\scriptsize $\smash{r_k}^{\!\vee}$}
\put(-25,20){\scriptsize $r_{mgh}$}
\put(-18,93){\scriptsize $e_{m}$}
\put(39,18){\scriptsize $\smash{e_{kgh}}^{\!\vee}$}
}
\ep }
=~\sum_{k,m}~ \xi_{km} \qquad
\raisebox{-54pt}{
\bp(92,133)
  \put(0,9){
\bild{pic15d}
\put(-38,-10){
\put(27,2){\scriptsize $Q^\vee$}
\put(30,108){\scriptsize $Q^\vee$}
\put(-5,108){\scriptsize $Q$}
\put(-3,2){\scriptsize $Q$}
\put(-24.3,55){\scriptsize ${}^{m}b^{gh}$}
\put(37,55){\scriptsize ${}_{k}\smash{b_{gh}}^{\!\vee}$}
\put(38,94){\scriptsize $\smash{r_k}^{\!\vee}$}
\put(-24,20){\scriptsize $r_{mgh}$}
\put(-18,94){\scriptsize $e_{m}$}
\put(37.3,18){\scriptsize $\smash{e_{kgh}}^{\!\vee}$}
\put(6,64){\scriptsize $L_{gh}$}
}
  }
\ep }
\label{alpha-alpha}
\end{array}
  \ee  
with
  \be
  \xi_{km} = \frac{\omega(k,g)\,\omega(kg,h)\,\psi(k,g,h)}
  {\omega(m,g)\,\omega(mg,h)\,\psi(m,g,h)} \,.
  \ee
Here the second step uses that there are no nonzero morphisms $L_n\To L_{mg}$ 
unless $n \eq mg$; by the same argument we conclude that $l \eq kg$. In the 
third step one applies relation \erf{defkozykel}. Now the condition 
$\alpha(g)\cir\alpha(h) \eq \alpha(gh)$ is equivalent to
  \be
  \frac{\omega(k,g)\,\omega(kg,h)\,\psi(k,g,h)} {\omega(m,g)\,\omega
  (mg,h)\,\psi(m,g,h)}
  = \frac{\omega(k,gh)}{\omega(m,gh)} ~\quad\mbox{for all }~ m,k\iN 
  H \,,
  \ee
which in turn can be rewritten as
  \be\label{domega-psi}
  \frac{\rmd\omega(m,g,h)}{\rmd\omega(k,g,h)} = \frac{\psi(m,g,h)}
  {\psi(k,g,h)}
  ~\quad\mbox{for all }~ m,k\iN H \,.
  \ee
The last condition is satisfied because by assumption $\rmd\omega \eq \psi|_H$.
So indeed we have $F(\omega) \iN \mathcal{H}$.
\\[.3em]
(ii)~\,Given $\alpha \iN \mathcal{H}$, for each $h \iN H$ the automorphism
$\alpha(h)$ satisfies the conditions of lemma \ref{alphagestalt}. As a
consequence we obtain a unique isomorphism $f_h{:}~ Q \oti L_h \To Q$ such 
that $\alpha(h)\eq \alpha_h$ and
$r_h \cir f_h \cir (e_\Eins \oti \id_{L_h}) \eq \id_{L_h}$. Define a
function $\omega{:}\ H \,{\times}\, H \To \Bbbk$ via
  \be
\omega(g,h)~
\raisebox{-37pt}{
\bp(42,81)
  \put(0,12){
\bild{pic71a}
\put(-40,-10){
\put(27,73){\scriptsize $L_{gh}$}
\put(37,2){\scriptsize $L_h$}
\put(22,2){\scriptsize $L_g$}
\put(38.5,38){\scriptsize ${}_gb_h$}
}}
\ep
} =~~~
\raisebox{-37pt}{
\bp(50,74)
  \put(0,12){
\bild{pic71b}
\put(-40,-10){
\put(26,73){\scriptsize $L_{gh}$}
\put(35,2){\scriptsize $L_h$}
\put(20,2){\scriptsize $L_g$}
\put(11.7,20){\scriptsize $e_g$}
\put(37.5,37){\scriptsize $f_h$}
\put(14,55){\scriptsize $r_{gh}$}
}}
\ep } 
\label{omega-from-alpha}
  \ee
Then define the map $G$ from $\mathcal{H}$ to functions $H \,{\times}\, H
\To \Bbbk$ by $G(\alpha) \,{:=}\, \omega$, with $\omega$ obtained as in
\erf{omega-from-alpha}. We will show that $G(\alpha) \iN \mathcal{T}$.
\\[.3em]
Given $\alpha \iN \mathcal{H}$, abbreviate $\omega \,{\equiv}\, G(\alpha)$.
First note that $\omega$ takes values in $\Bbbk^\times$, as $f_h$ is an
isomorphism. Next, the normalisation condition
$r_h \cir f_h \cir (e_\Eins \oti \id_{L_h})\eq \id_{L_h}$
implies $\omega(1,h) \eq 1$ for all $h\iN H$. Since $\alpha$ is a group
homomorphism we have $\alpha(1) \eq \id_{Q \oti Q^\vee}$. By the uniqueness 
result of lemma \ref{alphagestalt} this implies that $f_1 \eq \id_Q$, and so 
$\omega(g,1)\eq 1$ for all $g \iN H$. Altogether it follows that $\omega$ is a 
normalised two-cochain with values in $\Bbbk^\times$. By following again
the steps \erf{alpha-alpha} to \erf{domega-psi} one shows that, since $\alpha$ 
is a group homomorphism, $\omega$ must satisfy \erf{domega-psi}. Setting 
$k\eq 1$ and using that $\omega$ and $\psi$ are normalised finally demonstrates 
that $\rmd\omega \eq \psi\big|_H^{}$. Thus indeed $G(\alpha) \iN \mathcal{T}$.
\\[.3em]
(iii)~\,That $F(G(\alpha))\eq\alpha$ is immediate by construction, and that
$G(F(\omega)) \eq \omega$ follows from the uniqueness result of lemma 
\ref{alphagestalt}.
\end{proof}

We have seen that if $H$ is an admissible subgroup of $\pic D$ and $\omega$ a 
trivialisation of $\psi$, then $Q(H,\omega)$ is a symmetric special 
Frobenius algebra. Using the product and coproduct morphisms of $Q(H,\omega)$, 
the automorphisms $\alpha_h$ induced by $\omega$ as described in theorem 
\ref{gruppenschnitt} can be written as
  \be\label{eq:alpha-via-Q}
\alpha_h~=|H|\quad
\raisebox{-28pt}{
\bp(80,75)
\put(0,10){
\bild{pic57}
\put(-40,-10){
\put(34,71){\scriptsize $Q^\vee$}
\put(19,1){\scriptsize $Q^\vee$}
\put(-42,71){\scriptsize $Q$}
\put(-28,1){\scriptsize $Q$}
\put(-24.5,43){\scriptsize $P_h$}
}}
\ep }
  \ee
Note that in this picture the circle on the left stands for the coproduct of $Q$,
while the circle on the right stands for the dual of the product.
    
Given a trivialisation $\omega$ of $\psi$, theorem \ref{gruppenschnitt} allows 
us to realise $H$ as a subgroup of $\Aut (A)$. In particular the fixed algebra 
under the action of $H$ is well defined; we denote it by $A^H$. We already 
know from proposition \ref{fixalgebraeigenschaften} that $A^H$ is a symmetric 
Frobenius algebra. It will turn out that it is isomorphic to $Q(H,\omega)$. 
In particular, by remark \ref{trivalgebrarem} any equivalent choice of a 
trivialisation $\omega'$ of $\psi$ will give an isomorphic fixed algebra.

\begin{thm}\label{fixalgebrabestimmung}
Let $H$ be an admissible subgroup of $\,\pic{D}$ with trivialisation $\omega$,  
put $A\eq A(H)$ as in \erf{A(H)}, and embed $H \To \Aut(A)$ 
as in theorem \ref{gruppenschnitt}. Then the fixed algebra 
$A^H$ is well defined and it is isomorphic to $Q(H,\omega)$. 
\end{thm}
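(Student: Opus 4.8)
The plan is to show that the idempotent cutting out the fixed algebra is, up to the normalisation already recorded in \erf{eq:alpha-via-Q}, a Frobenius projector assembled from the structural morphisms of $Q(H,\omega)$, and then to read off that its image carries precisely the algebra structure of $Q(H,\omega)$. First the well-definedness: by theorem \ref{gruppenschnitt} the assignment $h\mapsto\alpha_h$ is a group homomorphism $\alpha{:}~H\To\Aut(A)$ with $\Pi_{Q^\vee,Q}\cir\PsiA\cir\alpha\eq\id_H$, hence injective, so $H$ is realised as a finite subgroup of $\Aut(A)$. Lemma \ref{fixalgebraexistiert} then guarantees that $A^H$ exists and is isomorphic to $A_P$, the image of the idempotent $P\eq P_H\eq\frac1{|H|}\sum_{h\in H}\alpha_h$, whose inherited structure is given by \erf{mP-from-er}.

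The next step is to compute $P$ explicitly. Substituting the form \erf{eq:alpha-via-Q} of $\alpha_h$ and using $\sum_{h\in H}P_h\eq\id_Q$, the prefactor $|H|$ cancels against the $\frac1{|H|}$, so that $P$ becomes the morphism of $A\eq Q\oti Q^\vee$ obtained by applying the coproduct $\Delta$ of $Q(H,\omega)$ on the left leg and the dual $m^\vee$ of its product on the right leg. I would then factorise this projector as $P\eq e\cir r$ by setting $e\,{:=}\,(\id_Q\oti\Phi)\cir\Delta$ and $r\,{:=}\,m\cir(\id_Q\oti\Phi^{-1})$, where $\Phi\eq\Phi_1\eq\Phi_2{:}~Q\To Q^\vee$ is the isomorphism of the symmetric Frobenius algebra $Q(H,\omega)$ (invertible by remark \ref{frobdim}). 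That $r\cir e\eq\id_Q$ is immediate from normalised specialness $m\cir\Delta\eq\id_Q$ of $Q(H,\omega)$ (proposition \ref{trivialisierungsalgebra}), while $e\cir r\eq P$ follows by bending the two legs of the Frobenius relation \erf{frobeigenschaft} with the pairing $\Phi$. This identifies the image object $A_P$ with $Q$.

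Finally I would transport the structure. The algebra $A\eq Q\oti Q^\vee$ carries the matrix product $m_A\eq\id_Q\oti d_Q\oti\id_{Q^\vee}$, unit $\eta_A\eq b_Q$, coproduct $\Delta_A\eq\id_Q\oti\tilde b_Q\oti\id_{Q^\vee}$ and counit $\varepsilon_A\eq\tilde d_Q$ of remark \ref{algbeispiel}, and by \erf{mP-from-er} together with the proof of proposition \ref{fixalgebraeigenschaften}(i) the fixed algebra has $m_P\eq r\cir m_A\cir(e\oti e)$, $\eta_P\eq r\cir\eta_A$, $\Delta_P\eq(r\oti r)\cir\Delta_A\cir e$ and $\varepsilon_P\eq\varepsilon_A\cir e$. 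A graphical computation using the Frobenius property, normalised specialness $m\cir\Delta\eq\id_Q$, symmetry $\Phi_1\eq\Phi_2$ and the duality axioms then collapses these four morphisms to the product, unit, coproduct and counit of $Q(H,\omega)$. Since $A_P$ is already known to be a symmetric special Frobenius algebra (proposition \ref{fixalgebraeigenschaften}), this yields the desired isomorphism of Frobenius algebras $A^H\,{\cong}\,Q(H,\omega)$.

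The main obstacle is the graphical bookkeeping in the last two paragraphs: matching the projector read off from \erf{eq:alpha-via-Q} with the composite $e\cir r$, and reducing $m_P$ and $\Delta_P$ to the multiplication and comultiplication of $Q(H,\omega)$, both rely on repeated use of the Frobenius relation together with the precise normalisations $m\cir\Delta\eq\id_Q$ and $\epsilon\cir\eta\eq|H|\id_\Eins$. Keeping track of the resulting factors of $|H|\eq\dim(Q)$ and of the correct placement of $\Phi$ and $\Phi^{-1}$ on the legs of $A\eq Q\oti Q^\vee$ is where care will be needed; everything else is routine application of the lemmas established above.
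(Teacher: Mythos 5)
Your proposal is correct and follows essentially the same route as the paper: your $e \eq (\id_Q\oti\Phi)\cir\Delta$ and $r \eq m\cir(\id_Q\oti\Phi^{-1})$ unfold, via the Frobenius relation, to exactly the paper's maps $i \eq (m\oti\id_{Q^\vee})\cir(\id_Q\oti b_Q)$ and $s \eq (\id_Q\oti\tilde d_Q)\cir(\Delta\oti\id_{Q^\vee})$, and the remaining steps (identifying $e\cir r$ with $P$ read off from \erf{eq:alpha-via-Q}, then collapsing the inherited structural morphisms \erf{mP-from-er} to those of $Q(H,\omega)$ using specialness $m\cir\Delta\eq\id_Q$, symmetry and duality) are the same graphical computations the paper carries out. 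The only cosmetic difference is that the paper fixes the inherited unit by a one-dimensionality argument on $\Hom(\Eins,Q)$ plus evaluation with $\epsilon$ and $\dim(Q)\eq|H|$, rather than by direct reduction, but this is a minor variation within the same proof.
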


\begin{proof}
Let $Q \,{\equiv}\,Q(H,\omega)$. By lemma \ref{charakter} we have $\dim(Q) \eq |H|$. 
Identify $H$ with its image in $\Aut(A)$ via the embedding $H \To \Aut(A)$ 
determined by $\omega$ as in theorem \ref{gruppenschnitt}. Now define morphisms 
$i{:}~ Q\To A$ and $s{:}~ A\To Q$ by
  \be
  i := (m \otimes \id_{Q^\vee}) \circ (\id_Q \otimes b_Q) \,, \qquad
  s := (\id_Q \otimes \tilde d_Q) \circ (\Delta \otimes \id_{Q^\vee}) \,.  
  \ee  
We have $s \cir i \eq \id_Q$, implying that $i$ is monic and $Q$ is a retract
of $A$. We claim that $i$ is the inclusion morphism
of the fixed algebra. Recall from \erf{fixalgebraprojektor} the definition 
$P \eq {|H|}^{-1}\sum_{h\in H}\alpha_h$ of the idempotent corresponding to the 
fixed algebra object. In the case under consideration, $P$ takes the form
  \be
P~=\quad
\raisebox{-30pt}{
\bp(80,78)
\put(0,12){
\bild{pic61}
\put(-40,-10){
\put(34,71){\scriptsize $Q^\vee$}
\put(19,1){\scriptsize $Q^\vee$}
\put(-42,71){\scriptsize $Q$}
\put(-28,1){\scriptsize $Q$}
}}
\ep }
  \ee
as follows from \erf{eq:alpha-via-Q} together with $\id_Q\eq\sum_{h\iN H}\!P_h$.
We calculate that $i\cir s \eq P$:
  \begin{eqnarray}
i\cir s~=~~
\raisebox{-60pt}{
\bp(70,137)
\bild{pic62a}
\put(-38,-10){
\put(34,139){\scriptsize $Q^\vee$}
\put(33,1){\scriptsize $Q^\vee$}
\put(-9,139){\scriptsize $Q$}
\put(-10,1){\scriptsize $Q$}
}
\ep
}\overset{(2)}{=}~~ 
\raisebox{-60pt}{
\bp(90,130)
\bild{pic62b}
\put(-39,-10){
\put(34,139){\scriptsize $Q^\vee$}
\put(20,1){\scriptsize $Q^\vee$}
\put(-22,139){\scriptsize $Q$}
\put(-22,1){\scriptsize $Q$}
}
\ep
}\overset{(3)}{=}~~
\raisebox{-60pt}{
\bp(95,130)
\bild{pic62c}
\put(-39,-10){
\put(34,139){\scriptsize $Q^\vee$}
\put(20,1){\scriptsize $Q^\vee$}
\put(-51,139){\scriptsize $Q$}
\put(-23,1){\scriptsize $Q$}
}
\ep
}\overset{(4)}{=}~~
\raisebox{-60pt}{
\bp(78,130)
\bild{pic62d}
\put(-39,-10){
\put(34,139){\scriptsize $Q^\vee$}
\put(34,1){\scriptsize $Q^\vee$}
\put(-36,139){\scriptsize $Q$}
\put(-23,1){\scriptsize $Q$}
}
\ep }
\nonumber\\
\overset{(5)}{=}~~
\raisebox{-64pt}{
\bp(91,164)
  \put(0,9){
\bild{pic62e}
\put(-40,-10){
\put(-22,139){\scriptsize $Q^\vee$}
\put(35,1){\scriptsize $Q^\vee$}
\put(-35,139){\scriptsize $Q$}
\put(-21,1){\scriptsize $Q$}
}
  }
\ep
}\overset{(6)}{=}~~
\raisebox{-64pt}{
\bp(91,160)
  \put(0,9){
\bild{pic62f}
\put(-40,-10){
\put(-29,139){\scriptsize $Q^\vee$}
\put(34,1){\scriptsize $Q^\vee$}
\put(-40,139){\scriptsize $Q$}
\put(-17,1){\scriptsize $Q$}
}
  }
\ep
}\overset{(7)}{=}~~
\raisebox{-64pt}{
\bp(101,160)
  \put(0,9){
\bild{pic62g}
\put(-40,-10){
\put(34,138){\scriptsize $Q^\vee$}
\put(19,1){\scriptsize $Q^\vee$}
\put(-50,138){\scriptsize $Q$}
\put(-36,1){\scriptsize $Q$}
}
  }
\ep
}~=P\,.\qquad
  \end{eqnarray}
Here in the second step a counit morphism is introduced and in the third step 
the Frobenius property is applied. The next step uses coassociativity, while
the fifth step follows because $Q(H,\omega)$ is symmetric.
Then one uses the Frobenius property and duality. So $Q$ satisfies the universal 
property of the image of $P$ and hence the universal property of the fixed 
algebra. We now calculate the product morphism that the fixed algebra inherits 
from $Q\oti Q^\vee$, starting from \erf{mP-from-er}:
  \begin{gather}
\raisebox{-65pt}{
\bp(141,142)
\bild{pic63a}
\put(-40,-10){
\put(-21,2){\scriptsize $Q$}
\put(-56,150){\scriptsize $Q$}
\put(-93,2){\scriptsize $Q$}
}
\ep
}=~~
\raisebox{-65pt}{
\bp(93,140)
\bild{pic63b}
\put(-40,-10){
\put(-39,2){\scriptsize $Q$}
\put(-38,150){\scriptsize $Q$}
\put(-23,2){\scriptsize $Q$}
}
\ep
}=~~
\raisebox{-65pt}{
\bp(89,140)
\bild{pic63c}
\put(-40,-10){
\put(-8,2){\scriptsize $Q$}
\put(-21,150){\scriptsize $Q$}
\put(-37,2){\scriptsize $Q$}
}
\ep
}=~~
\raisebox{-65pt}{
\bp(81,140)
\bild{pic63d}
\put(-40,-10){
\put(21,150){\scriptsize $Q$}
\put(34,2){\scriptsize $Q$}
\put(6,2){\scriptsize $Q$}
}
\ep
}
  \end{gather}
The first step uses duality, the second one holds by associativity of $m$. In 
the last step one uses that $s\cir i \eq \id_Q$. The inherited unit morphism is given by
  \be
\raisebox{-35pt}{
\bp(40,81)
\bild{pic64}
\put(-40,-10){
\put(-15,90){\scriptsize $Q$}
}
\ep }
  \ee
which due to $\dimf \Hom (\Eins, Q) \eq 1$ is equal to $\zeta\eta$ for some 
$\zeta\iN\Bbbk$. Applying $\epsilon$ to both these morphisms and using that 
$\dim(Q) \eq |H|$, we see that $\zeta \eq 1$. Similarly one shows that the 
coproduct and counit morphisms that $Q$\ inherits as a fixed algebra equal 
those defined in proposition \ref{trivialisierungsalgebra}. So $Q(H,\omega)$ 
is isomorphic to the fixed algebra $A^H$ as a Frobenius algebra. 
\end{proof}

\begin{rem}
The algebra structure on the object $Q(H,\omega)$ is a kind of twisted 
group algebra of the group $H$ which is not twisted by a closed two-cochain, 
but rather by a trivialisation of the associator of $\D$. Algebras of this 
type have appeared in applications in conformal field theory \cite{fuRs9}.
\end{rem}


\section{Algebras in general Morita classes}\label{sec:gen-class}

In this section we solve the problem discussed in the previous section for algebras 
that are not Morita equivalent to the tensor unit. 
Throughout this section we will assume the following.
    
\begin{conv}\label{convention2}
$(\C, \oti, \Eins)$ has the properties listed in convention \ref{convention1}
and is in addition skeletally small and sovereign. $(A,m,\eta,\Delta,\epsilon)$ 
is a simple and absolutely simple symmetric normalised special 
Frobenius algebra in $\C$, and $H\,{\le}\, \Pic (\CAA)$ is a finite subgroup.
\end{conv}

Recall from remark \ref{frobdim} that the conditions above imply 
$\dim(A)\,{\neq}\, 0$. In the sequel we will find 
a symmetric special Frobenius algebra $A^\prime \eq A^\prime (H)$ and a Morita 
context $\MK {A}{A^\prime}{P}{P^\prime}$ in $\C$, such that $H$ is a subgroup of 
$\im(\Pi_{P,P^\prime}\cir\Psi_{\!A^\prime})$, where $\Pi_{P,P^\prime}$ is the 
isomorphism introduced in \erf{piciso}. This generalises the results of proposition 
\ref{mengenschnitt}.

We will apply some of the results of the previous section to the strictification 
$\D$ of the category $\CAA$. Note that $\D$ has the properties stated in 
convention \ref{convention1} and is in addition sovereign, as can be seen 
by straightforward calculations which are parallel to those of \cite{fuSc16} 
for the category $\CA$ of left $A$-modules. By applying the inverse
equivalence functor $\D\,{\stackrel{\simeq}{\longrightarrow}}\, \CAA$ this will 
then yield a symmetric special Frobenius algebra in $\CAA$ that has the desired 
properties. Note that the graphical representations of morphisms used below are 
meant to represent morphisms in $\C$. Pertinent facts about the structure of the 
category $\CAA$ are collected in appendix \ref{appA}; in the sequel we will freely 
use the terminology presented there. It is worth emphasising that for establishing 
various of the results below, it is essential that $A$ is not just an algebra in 
$\C$, but even a simple and absolutely simple symmetric special Frobenius algebra.

As a first step we study how concepts like algebras and modules over algebras 
can be transported from $\CAA$ to $\C$. If $X$ is an object of $\CAA$, 
i.e.\ an $A$-bimodule in $\C$, we denote the corresponding object of $\C$ by $\dot X$.

\begin{prop}\label{algebratransport}
~\\[-1.8em]
\def\leftmargini{2.1em} \begin{itemize}
\item[\rm (i)]
Let $(B, m_B, \eta_B)$ be an algebra in $\CAA$. 
Then $(\dot B, m_B\cir r_{B,B}, \eta_B\cir\eta)$ is an algebra in $\C$. 
\item[\rm (ii)] 
If $(C, \Delta_C, \epsilon_C)$ is a coalgebra in $\CAA$, then 
$(\dot C,e_{C,C}\cir\Delta_C, \epsilon\cir\epsilon_C)$ is a coalgebra in $\C$.
\item[\rm (iii)] 
A morphism $\gamma{:}~B\To\,B^\prime$ of algebras in $\CAA$ is also a morphism 
of algebras in $\C$.
\item[\rm (iv)] 
Let $(B,m_B,\eta_B)$ be an algebra in $\CAA$ and $(M, \rho)$ be a left $B$-module 
in $\CAA$. Then $(\dot M,\rho\cir r_{B,M})$ is a left $\dot B$-module in $\C$. 
Similarly right $B$-modules and $B$-bimodules in $\CAA$ can be transported to $\C$. 
Further, if $f{:}~(M,\rho_M)\To (N,\rho_N)$ is a morphism of left $B$-modules in 
$\CAA$, then $f$ is also a morphism of left $\dot B$-modules in $\C$, and an
analogous statement holds for morphisms of right- and bimodules.
\item[\rm (v)] 
If $(B, m_B, \Delta_B, \eta_B,\epsilon_B)$ is a Frobenius algebra in $\CAA$, then 
$(\dot B,m\cir r_{B,B},e_{B,B}\cir\Delta, \eta_B\cir\eta, \epsilon\cir\epsilon_B)$ 
is a Frobenius algebra in $\C$. If $B$ is special in $\CAA$, then $\dot B$ is 
special in $\C$. If $B$\ is symmetric in $\CAA$, then $\dot B$ is symmetric in $\C$.
\end{itemize}
\end{prop}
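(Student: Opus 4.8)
The plan is to prove all five items by a single device. Every structure morphism on an object $B$, $C$ or $M$ of $\CAA$ is defined using the tensor product $\otimes_A$, i.e.\ it factors through the subobjects $B\otimes_A B$, $C\otimes_A C$, $B\otimes_A M$, which are the images of the idempotents $P_{\cdot,\cdot}$ on the corresponding $\C$-level tensor products. Writing $P_{M,N}=e_{M,N}\circ r_{M,N}$ with $r_{M,N}\circ e_{M,N}=\id$ as in the appendix, I would pull each axiom back to $\C$ by precomposing with the embeddings $e$ and inserting the restrictions $r$, using throughout: (a) $r\circ e=\id$; (b) naturality of $e,r$ under bimodule morphisms (so that if $\gamma$ intertwines the $A$-actions then $\gamma\otimes\gamma$ commutes with the projectors, giving $r_{B',B'}\circ(\gamma\otimes\gamma)=(\gamma\otimes_A\gamma)\circ r_{B,B}$); and (c) the associativity coherence relating $P_{M\otimes_A N,L}$ to $P_{M,N\otimes_A L}$, which I take from the appendix.

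For (i), I would show that both sides of the associativity equation for $m_B\circ r_{B,B}$ equal the triple product $m_B^{(3)}$ of $\CAA$ composed with the projection $\dot B^{\otimes 3}\to B\otimes_A B\otimes_A B$; this is exactly where coherence (c) and the associativity of $m_B$ in $\CAA$ are used. Unitality uses that $\eta_B$ is a bimodule morphism $A\to\dot B$ and that, after naturality (b), $r_{A,B}\circ(\eta\otimes\id_{\dot B})$ implements the unit isomorphism $A\otimes_A B\cong B$ of $\CAA$, the scalar coming out equal to $1$ by the normalisation of $A$. Statement (ii) is strictly dual, interchanging the roles of $e$ and $r$. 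Statement (iii) is the quickest: a $\CAA$-algebra map obeys $\gamma\circ m_B=m_{B'}\circ(\gamma\otimes_A\gamma)$, and composing with $r_{B,B}$ and applying (b) turns this into $\gamma\circ(m_B\circ r_{B,B})=(m_{B'}\circ r_{B',B'})\circ(\gamma\otimes\gamma)$, while unit preservation is immediate.

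Part (iv) transports modules by the identical recipe, with $m_B$ replaced by the action $\rho$ and $r_{B,B}$ by $r_{B,M}$, module-morphism compatibility again following from (b). Part (v) then assembles (i) and (ii): the two halves of the Frobenius property \erf{frobeigenschaft} for $(\dot B,m_B\circ r_{B,B},e_{B,B}\circ\Delta_B)$ reduce, by inserting the idempotents $P_{B,B}$ and using coherence (c), to the Frobenius property of $B$ in $\CAA$. Specialness is the easiest point and I would single it out: $(m_B\circ r_{B,B})\circ(e_{B,B}\circ\Delta_B)=m_B\circ(r_{B,B}\circ e_{B,B})\circ\Delta_B=m_B\circ\Delta_B$ is a nonzero multiple of $\id_{\dot B}$ by specialness of $B$, and $(\epsilon\circ\epsilon_B)\circ(\eta_B\circ\eta)=\epsilon\circ(\epsilon_B\circ\eta_B)\circ\eta$ is a nonzero multiple of $\epsilon\circ\eta$, which is nonzero because $A$ is normalised special with $\dim(A)\neq 0$ (remark \ref{frobdim}).

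The hard part will be the symmetry claim in (v). Here one cannot merely shuffle projectors: the morphisms $\Phi_1,\Phi_2\in\Hom(\dot B,(\dot B)^\vee)$ of \erf{Phi1Phi2} are built from the $\C$-duality, whereas the symmetry of $B$ is the equality of the analogous morphisms built from the sovereign structure of $\CAA$. Bridging the two requires the explicit description (appendix) of the dual bimodule $B^\vee$ and of the duality morphisms of $\CAA$ in terms of the Frobenius structure of $A$, and it is precisely at this step that the hypotheses that $A$ be simple, absolutely simple and symmetric special Frobenius enter: they ensure that the canonical identification of $(\dot B)^\vee$ with $\dot{(B^\vee)}$ intertwines the two Frobenius pairings, so that $\Phi_1=\Phi_2$ in $\CAA$ descends to $\Phi_1=\Phi_2$ in $\C$. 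I would isolate this identification as a separate lemma and treat everything else as routine projector bookkeeping.
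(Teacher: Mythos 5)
Your proposal follows essentially the same route as the paper's proof: transport each axiom by composing with the embeddings $e_{\cdot,\cdot}$ and restrictions $r_{\cdot,\cdot}$, commute bimodule morphisms past the idempotents $P_{\cdot,\cdot}$ (your point (b), which is exactly the paper's argument for (iii) and (iv)), use the coherence \erf{assoziatoreigenschaft} of the associator for (i) and (v), and reduce everything to the corresponding axiom in $\CAA$ — the paper likewise disposes of specialness and symmetry with a ``similarly one checks''. The one divergence is your attribution of the symmetry step to the simplicity and absolute simplicity of $A$: no separate identification lemma is needed, since the dual bimodule $B^\vee$ in $\CAA$ has the same underlying object and dual morphisms as in $\C$ (only the evaluation and coevaluation morphisms differ, and relating them is the same projector bookkeeping, requiring only that $A$ be symmetric normalised special Frobenius so that the sovereign structure of $\CAA$ from appendix \ref{appA} exists); the simplicity hypotheses instead serve to make the tensor unit $A$ of $\CAA$ simple and absolutely simple, as demanded by convention \ref{convention1}, and enter other results of the section rather than this transport argument.
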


\begin{proof}
(i)~\,That $m_B$ is an associative product for $B$\ in $\CAA$ means that
  \be
  m_B\cir (\id_B\otimesA m_B)\cir \alpha_{B,B,B}=m_B\cir(m_B\otimesA\id_B) \,,
  \ee
where $\alpha_{B,B,B}$ is the associator defined in \erf{bimod-associator}.
After inserting the definitions of the tensor product of morphisms and 
the associator $\alpha_{B,B,B}$ this reads 
  \be
\raisebox{-63pt}{
\bp(65,133)
  \put(0,10){
\bild{pic74a}
\put(-40,-10){
\put(5,1){\scriptsize $(\bb)\otimesA B$}
\put(20,131){\scriptsize $B$}
\put(30.5,115.7){\scriptsize $m_B^{}$}
\put(36.5,90.7){\scriptsize $m_B^{}$}
\put(-6,103){\scriptsize $r_{B,B}^{}$}
\put(-23,63){\scriptsize $r_{B,\bb}^{}$}
\put(40,51){\scriptsize $r_{B,B}^{}$}
\put(-5,36){\scriptsize $e_{B,B}^{}$}
\put(-23,81){\scriptsize $e_{B,\bb}^{}$}
\put(-15,20){\scriptsize $e_{\bb,B}^{}$}
}
  }
\ep
}=\qquad~~
\raisebox{-63pt}{
\bp(35,100)
  \put(0,10){
\bild{pic74b}
\put(-40,-10){
\put(5,1){\scriptsize $(\bb)\otimesA B$}
\put(26.7,127){\scriptsize $B$}
\put(9,101){\scriptsize $m_B^{}$}
\put(3,58){\scriptsize $m_B^{}$}
\put(40,34){\scriptsize $e_{\bb,B}^{}$}
\put(40.3,81){\scriptsize $r_{B,B}^{}$}
}
  }
\ep }
  \ee
After composing both sides of this equality with the morphism 
$r_{B\otimesA B,B}\cir (r_{B,B}\oti\id_B)$ the resulting idempotents 
$P_{B,B\otimesA B}$, $P_{B\otimesA B,B}$ and $P_{B,B}$ can be dropped
from the left hand side, and $P_{B\otimesA B,B}$ from the right hand side.
We see that $m_B\cir r_{B,B}$ is indeed an associative product for 
$\dot B$ in $\C$. Next consider the morphism $\eta_B$; we have
  \begin{eqnarray}
\raisebox{-60pt}{
\bp(56,130)
\bild{pic75a}
\put(-40,-10){
\put(5.5,2){\scriptsize $B$}
\put(18.3,135.3){\scriptsize $B$}
\put(29.5,104){\scriptsize $m_B^{}$}
\put(41.5,56.5){\scriptsize $\eta_{B}^{}$}
\put(-13,81){\scriptsize $r_{\!B,B}^{}$}
}
\ep
}=\quad
\raisebox{-60pt}{
\bp(72,130)
\bild{pic75b}
\put(-40,-10){
\put(-5,2){\scriptsize $B$}
\put(8.5,135){\scriptsize $B$}
\put(18,117.5){\scriptsize $m_B^{}$}
\put(29.3,102){\scriptsize $r_{\!B,B}^{}$}
\put(35,80.5){\scriptsize $\eta_{B}^{}$}
}
\ep
}=\quad
\raisebox{-60pt}{
\bp(65,130)
\bild{pic75c}
\put(-40,-10){
\put(4.5,2){\scriptsize $B$}
\put(18.5,135){\scriptsize $B$}
\put(28,117){\scriptsize $m_B^{}$}
\put(39.3,102){\scriptsize $r_{\!B,B}^{}$}
\put(41,74.5){\scriptsize $\eta_{B}^{}$}
}
\ep
}=\quad
\raisebox{-60pt}{
\bp(122,130)
\bild{pic75d}
\put(-40,-10){
\put(5,2){\scriptsize $B$}
\put(20,135){\scriptsize $B$}
\put(30,118){\scriptsize $m_B^{}$}
\put(41,103){\scriptsize $r_{\!B,B}^{}$}
\put(41.4,89.2){\scriptsize $\eta_{B}^{}$}
\put(40,74){\scriptsize $e_{B,A}^{}$}
\put(40,56){\scriptsize $r_{\!B,A}^{}$}
}
\ep }
\nonumber\\[1.6em]
=~ m_B\cir(\id_B\otimesA\,\eta_B)\cir\rho^A(B)^{-1}\,=\,\id_B \,,~ 
 \end{eqnarray}
with $\rho^A$ the unit constraint as given by \erf{Bild38} in the appendix. Here 
in the first step the idempotent $P_{B,B}$ is introduced and then moved downwards, 
and likewise in the third step. The fifth step is the unit property of $\eta_B$ 
in $\CAA$. So we see that $\eta_B\cir \eta$ is indeed a right unit for $\dot B$ 
in $\C$, similarly one shows that it is also a left unit.
\\[.5em]
(ii)\,
is proved analogously to the preceding statement.
\\[.5em]
(iii)\,
Let $m_B$ and $m_{B^\prime}$ denote the products of $B$ and $B^\prime$ in $\CAA$. Then
  \be
  \bearll
  \gamma\cir m_B\cir r_{B,B} \!\!& = m_{B^\prime}\cir (\gamma\otA\gamma)\cir r_{B,B}
  = m_{B^\prime}\cir r_{\!B^\prime,B^\prime}^{}\cir (\gamma\oti\gamma)\cir P_{B,B}
  \\{}\\[-.9em]&
  = m_{B^\prime}\cir r_{\!B^\prime,B^\prime}^{}\cir P_{\!B^\prime,B^\prime}^{}\cir(\gamma\oti\gamma)
  = m_{B^\prime}\cir r_{\!B^\prime,B^\prime}^{}\cir (\gamma\oti\gamma)\,,
  \eear
  \ee
where the third equality uses that $\gamma$ is a morphism in $\CAA$. Further we have
$\gamma\cir \eta_B\cir\eta \eq \eta_{B^\prime}\cir\eta$, as $\gamma$ respects the 
unit $\eta_B$ of $B$ in $\CAA$. So $\gamma$ is also a morphism of algebras in $\C$.
\\[.5em]
(iv)\,
The statement that $M$ is a left $B$-module in $\CAA$ reads
  \be
  \rho\cir(\id_B\otimesA\rho)\cir\alpha_{B,B,M}=\rho\cir (m_B\otimesA\id_M) \,,
  \ee
which is an equality in $\Hom_{A|A}((B\otimesA\,B)\otimesA M,M)$. Similarly to the 
proof in i), one shows that this indeed implies that $(M,\rho\cir r_{B,M})$ is a 
left $\dot B$-module in $\C$.
\\[3pt]
Now for any morphism $f{:}~(M,\rho_M)\To (N,\rho_N)$ we have
  \be
  \bearll
  f\cir \rho_M\cir r_{B,M} \!\!& =\rho_N\cir(\id_B\otimesA\, f)\cir r_{B,M}
  =\rho_N\cir r_{B,N}\cir (\id_B\oti f)\cir P_{B,M}
  \\{}\\[-.9em]&
  =\rho_N\cir r_{B,N}\cir P_{B,N}\cir (\id_B\oti f)
  =\rho_N\cir r_{B,N}\cir(\id_B\oti f) \,,
  \eear
  \ee
showing that $f$ is a morphism of left $\dot B$-modules in $\C$. Similarly one 
verifies the conditions for right- and bimodules. 
\\[.5em]
(v)\,
To see that the product and coproduct morphisms for $\dot B$ satisfy the 
Frobenius property in $\C$ consider the following calculation:
  \be
\raisebox{-83pt}{
\bp(53,164)
  \put(0,8){
\bild{pic76a}
\put(-40,-10){
\put(6,2){\scriptsize $B$}
\put(32,2){\scriptsize $B$}
\put(32,165){\scriptsize $B$}
\put(6,165){\scriptsize $B$}
\put(30,70){\scriptsize $m_B^{}$}
\put(30,96){\scriptsize $\Delta_{B}$}
\put(-12,122){\scriptsize $e_{B,B}^{}$}
\put(-12,46){\scriptsize $r_{\!B,B}^{}$}
}
  }
\ep
}=\qquad\quad
\raisebox{-83pt}{
\bp(85,150)
  \put(0,8){
\bild{pic76b}
\put(-40,-10){
\put(7,2){\scriptsize $B$}
\put(7,165){\scriptsize $B$}
\put(33,2){\scriptsize $B$}
\put(33,165){\scriptsize $B$}
\put(-9,113){\scriptsize $m_B^{}$}
\put(40,56){\scriptsize $\Delta_B$}
\put(41,21){\scriptsize $r_{\!B,B}^{}$}
\put(41,41){\scriptsize $e_{B,B}^{}$}
\put(-28,71){\scriptsize $r_{\!B,\bb}^{}$}
\put(41,99){\scriptsize $e_{\bb,B}^{}$}
\put(41,145){\scriptsize $e_{B,B}^{}$}
\put(41,126){\scriptsize $r_{\!B,B}^{}$}
\put(29.6,84){\scriptsize $\alpha_{B,B,B}^{-1}$}
}
  }
\ep
}=\qquad~~
\raisebox{-83pt}{
\bp(83,150)
  \put(0,8){
\bild{pic76c}
\put(-40,-10){
\put(2,2){\scriptsize $B$}
\put(3,165){\scriptsize $B$}
\put(29,2){\scriptsize $B$}
\put(29,165){\scriptsize $B$}
\put(-12.7,149){\scriptsize $m_B^{}$}
\put(38,36){\scriptsize $r_{B,\bb}^{}$}
\put(37,131){\scriptsize $e_{\bb,B}^{}$}
\put(38,109){\scriptsize $r_{\!\bb,B}^{}$}
\put(-17,92){\scriptsize $r_{\!B,B}^{}$}
\put(36,53){\scriptsize $e_{B,\bb}^{}$}
\put(41,73){\scriptsize $e_{B,B}^{}$}
\put(39,18){\scriptsize $\Delta_{B}$}
}
  }
\ep
}=\qquad
\raisebox{-83pt}{
\bp(65,150)
  \put(0,8){
\bild{pic76d}
\put(-40,-10){
\put(2,2){\scriptsize $B$}
\put(8.5,164){\scriptsize $B$}
\put(25,2){\scriptsize $B$}
\put(29,164){\scriptsize $B$}
\put(-8,134){\scriptsize $m_B^{}$}
\put(-18,108){\scriptsize $r_{\!B,B}^{}$}
\put(34,30.7){\scriptsize $\Delta_{B}$}
\put(39,58){\scriptsize $e_{B,B}^{}$}
}
  }
\ep }
  \ee
The first equality is the assertion that $B$ is a Frobenius algebra in $\CAA$, 
the second one implements the definition of $\alpha_{B,B,B}^{-1}$. In the last 
step the resulting idempotents are moved up or down, upon which they can be
dropped. A parallel argument establishes the second identity.
\\[3pt]
Similarly one checks that specialness and symmetry of $\dot B$ are transported 
to $\C$ as well.
\end{proof}

To apply the results of the previous section to the algebra $A$ we also need 
to deal with Morita equivalence in $\CAA$. We start with the following observation.

\begin{lemma}\label{tensortransport}
Let $B$ be a symmetric special Frobenius algebra in $\,\CAA$, 
and $C$ and $D$ be algebras in $\,\CAA$ and let $({}_CM_B,\rho_C,\varrho_B)$ 
be a $C$-$B$-bimodule and $({}_BN_D,\rho_B,\varrho_D)$ a $B$-$D$-bi\-module. 
Then the tensor product $M \,{\otimes_B}\, N$ in $\CAA$ is isomorphic, 
as a $\dot C$-$\dot D$-bimodule in $\C$, to the tensor product 
$\dot M \,{\otimes_{\dot B}}\, {\dot N}$ over the algebra $\dot B$ in $\C$.
\end{lemma}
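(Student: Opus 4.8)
The plan is to realise both tensor products as images of idempotents and to reduce the statement to a single identity of idempotents on one fixed object of $\C$. Recall from appendix \ref{appA} that the tensor product over a special Frobenius algebra is the image of a separability idempotent, equipped with an embedding and a restriction morphism. Thus $M\otimesB N$ in $\CAA$ is the image, taken in $\CAA$, of an idempotent $P^B_{M,N}\iN\End_{A|A}(M\otA N)$ built from the actions $\varrho_B$, $\rho_B$ and the Frobenius handle $\Delta_B\cir\eta_B$ of $B$; write $\mathsf e{:}~M\otimesB N\To M\otA N$ and $\mathsf r{:}~M\otA N\To M\otimesB N$ for the corresponding embedding and restriction, so that $\mathsf r\cir\mathsf e\eq\id$ and $\mathsf e\cir\mathsf r\eq P^B_{M,N}$. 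On the other side, $\dot M\otimes_{\dot B}\dot N$ in $\C$ is the image of the idempotent $P^{\dot B}_{\dot M,\dot N}\iN\End(\dot M\oti\dot N)$ assembled from the transported data of proposition \ref{algebratransport}, i.e.\ from the actions $\varrho_B\cir r_{M,B}$ and $\rho_B\cir r_{B,N}$ and the transported handle $e_{B,B}\cir\Delta_B\cir\eta_B\cir\eta$.

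First I would use that any functor preserves split idempotents: applying the forgetful functor $X\mapsto\dot X$ to the retract $M\otimesB N$ of $M\otA N$ shows that $\dot{(M\otimesB N)}$ is the image in $\C$ of $P^B_{M,N}$, now viewed as an idempotent on $\dot{(M\otA N)}$. Composing with the embedding $e_{M,N}$ and the restriction $r_{M,N}$ of the $\otimesA$-retract $\dot{(M\otA N)}$ inside $\dot M\oti\dot N$ then identifies $\dot{(M\otimesB N)}$ with the image of $e_{M,N}\cir P^B_{M,N}\cir r_{M,N}$ on $\dot M\oti\dot N$; that this composite is genuinely idempotent uses $r_{M,N}\cir e_{M,N}\eq\id$. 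Hence the lemma reduces to the single identity
  \be
  e_{M,N}\cir P^B_{M,N}\cir r_{M,N} = P^{\dot B}_{\dot M,\dot N}
  \ee
of idempotents in $\End(\dot M\oti\dot N)$, since equal idempotents have isomorphic images.

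To establish this identity I would expand the left-hand side by inserting the definition of the $\otimesA$-tensor product of morphisms, $f\otA g\eq r\cir(f\oti g)\cir e$, together with the unit constraints of $\CAA$ that implicitly identify $M\otA A\otA N$ with $M\otA N$ when the handle is inserted. The resulting string of $r$'s, $e$'s, projectors $P_{\bullet,\bullet}$ and structure morphisms is then simplified graphically using the relations of the appendix (the analogues of lemma \ref{projektorweglassen} and remark \ref{projektorweg}): every factor $e_{\bullet,\bullet}\cir r_{\bullet,\bullet}\eq P_{\bullet,\bullet}$ that meets an action $\varrho_B$ or $\rho_B$, or the embedding $e_{B,B}$ of the coproduct, may be absorbed because these are morphisms of $A$-bimodules. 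The $A$-strand created at the bottom of the handle is filled in precisely by the unit $\eta$ occurring in the transported unit $\eta_B\cir\eta$ of $\dot B$, so that after all cancellations the left-hand side collapses exactly to $P^{\dot B}_{\dot M,\dot N}$. I expect this bookkeeping to be the main obstacle: one must track the two nested levels of tensor product, over $A$ and over $B$, and cancel the correct projectors with the right specialness normalisations for \emph{both} $A$ and $B$; everything else is formal.

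Finally I would verify that the resulting isomorphism is one of $\dot C$-$\dot D$-bimodules. The left $\dot C$-action on $\dot M\otimes_{\dot B}\dot N$ is induced from the transported action $\rho_C\cir r_{C,M}$ of proposition \ref{algebratransport}(iv), while the left $C$-action on $M\otimesB N$ in $\CAA$ descends from $\rho_C$ and is carried over verbatim by the dot functor. Since the morphisms $\mathsf e$, $\mathsf r$, $e_{M,N}$, $r_{M,N}$ are all morphisms of $A$-bimodules and commute with these actions, the identification intertwines the $\dot C$-action, and symmetrically the $\dot D$-action. This completes the proof.
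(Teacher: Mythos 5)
Your proposal is correct and takes essentially the same route as the paper's proof: both reduce the lemma to the identity $P_{\dot M,\dot N}^{\dot B} \eq e_{M,N}\cir P_{M,N}^{B}\cir r_{M,N}$ of idempotents on $\dot M\oti\dot N$ (verified graphically using the projector-absorption rules and specialness of $A$ and $B$), obtain the isomorphism $f$ from the two resulting monic--epi decompositions $e_{M,N}\cir e_{M,N}^B$ and $r_{M,N}^B\cir r_{M,N}$ of this idempotent, and then check the $\dot C$- and $\dot D$-intertwining by composing the transported actions with $f$ and using the relations $f\cir r_{M,N}^B\cir r_{M,N} \eq r_{\dot M,\dot N}^{\dot B}$ and $e_{M,N}\cir e_{M,N}^B \eq e_{\dot M,\dot N}^{\dot B}\cir f$. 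The only difference is presentational: the paper carries out the final bimodule check as an explicit graphical computation, where you argue it more briefly from the bimodule-morphism property of the embeddings and restrictions, which amounts to the same calculation.
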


\begin{proof}
Since $B$ is symmetric special Frobenius in $\CAA$, the idempotent $P_{M,N}^B$ 
corresponding to the tensor product of $M$ and $N$ over $B$ is well defined in 
$\CAA$. Explicitly it reads
  \be
  (\varrho_B\otimesA\rho_B)\cir\alpha_{M,B,B\otimesA N}^{-1}\cir (\id_M\otimesA\alpha_{B,B,N})
  \cir (\id_M\otimesA(\Delta_B\cir\eta_B)\otimesA\id_N))\cir(\id_M\otimesA\lambda^{A}(N)^{-1})\,.
  \ee
One calculates that this equals the morphism given by the following morphism in $\C$:
  \be
\raisebox{-83pt}{
\bp(55,149)
  \put(0,8){
\bild{pic77}
\put(-45,-10){
\put(3,2){\scriptsize $M\otimesA N$}
\put(3,147){\scriptsize $M\otimesA N$}
\put(-14,108.5){\scriptsize $\varrho_B^{}$}
\put(34.7,108.5){\scriptsize $\rho_B^{}$}
\put(23,53){\scriptsize$\eta_B^{}$}
\put(22.6,63){\scriptsize$\Delta_{B}$}
\put(31,24){\scriptsize $e_{M,N}^{}$}
\put(24,72){\scriptsize$e_{B,B}$}
\put(41,93){\scriptsize $r_{\!B,N}^{}$}
\put(-28,93){\scriptsize $r_{\!M,B}^{}$}
\put(34,124){\scriptsize $r_{\!M,N}^{}$}
}
  }
\ep }
  \ee
Composing with the morphisms $e_{M,N}$ and $r_{M,N}$ for the tensor product over 
$A$, we see that $P_{\dot M,\dot N}^{\dot B} \eq e_{M,N}\cir P_{M,N}^{B}
\cir r_{M,N} \eq e_{M,N}\cir e_{M,N}^B\cir r_{M,N}^B\cir r_{M,N}$. This furnishes
a different decomposition of $P_{\dot M,\dot N}^{\dot B}$ into a monic and an epi, 
hence there is an isomorphism $f{:}~ M\otimes_BN\congTo \dot M\otimes_{\dot B}\dot N$ 
of the images of $P_{M,N}^B$ and $P_{\dot M,\dot N}^{\dot B}$, such that 
$f\cir r_{M,N}^B\cir r_{M,N} \eq r_{\dot M,\dot N}^{\dot B}$ and 
$e_{M,N}\cir e_{M,N}^B \eq e_{\dot M,\dot N}^{\dot B}\cir f$. Now the left action of 
$C$ on $M\otimes_B N$ is given by
  \be
r_{M,N}^B\cir (\rho_C\otimesA\id_N)\cir\alpha_{C,M,N}^{-1}\cir
  (\id_C\otimesA\, e_{M,N}^B)~=\qquad~
\raisebox{-85pt}{
\bp(87,182)
  \put(0,10){
\bild{pic78a}
\put(-40,-10){
\put(-3,1){\scriptsize $C\otimesA(M\otimesB N)$}
\put(12,181){\scriptsize $M\otimesB N$}
\put(1,138){\scriptsize $\rho_C^{}$}
\put(31,163){\scriptsize $r_{M,N}^B$}
\put(36,126){\scriptsize $e_{C\otimesA M,N}^{}$}
\put(36,110){\scriptsize $r_{\!C\otimesA M,N}^{}$}
\put(34,62){\scriptsize $e_{C,M\otimesA N}^{}$}
\put(35,48){\scriptsize $r_{C,M\otimesA N}^{}$}
\put(41,80){\scriptsize $e_{M,N}^{}$}
\put(-13,99){\scriptsize $r_{\!C,M}^{}$}
\put(37,149){\scriptsize $r_{M,N}^{}$}
\put(36,34){\scriptsize $e_{M,N}^B$}
\put(33,19){\scriptsize $e_{C,M\otimesB N}^{}$}
}
  }
\ep
}=\qquad
\raisebox{-85pt}{
\bp(100,170)
  \put(0,10){
\bild{pic78b}
\put(-40,-10){
\put(-5,1){\scriptsize $C\otimesA(M\otimesB N)$}
\put(12,181){\scriptsize $M\otimesB N$}
\put(1,122){\scriptsize $\rho_C^{}$}
\put(32,157){\scriptsize $r_{M,N}^B$}
\put(41,72){\scriptsize $e_{M,N}^{}$}
\put(-11,104){\scriptsize $r_{\!C,M}^{}$}
\put(37,141){\scriptsize $r_{M,N}^{}$}
\put(35.3,54){\scriptsize $e_{M,N}^B$}
\put(32,34){\scriptsize $e_{C,M\otimesB N}^{}$}
}
  }
\ep }
  \ee
Compose this morphism from the right with $r_{C,M\otimes_BN}$ and drop the resulting 
idempotent to get the transported left action of $\dot C$. Now composing with $f$ from 
the left and replacing $f\cir r_{M,N}^B\cir r_{M,N}$ by $r_{\dot M,\dot N}^{\dot B}$ 
and $e_{M,N}\cir e_{M,N}^B$ by $e_{\dot M,\dot N}^{\dot B}\cir f$ shows that $f$ 
also intertwines the left actions of $\dot C$. Similarly one shows that $f$ is 
also an isomorphism of $\dot D$-right modules.
\end{proof}

\begin{cor}\label{Moritatransport}
Assume that $B$ and $C$ are symmetric special Frobenius algebras in $\,\CAA$, and 
that $\MK BCP{P^\prime}$ is a Morita context in $\CAA$. Then 
$\MK{\dot B}{\dot C}{\dot P}{\dot P^\prime}$ is a Morita context in $\,\C$.
\end{cor}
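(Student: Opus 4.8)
The goal is to manufacture, out of the given Morita context $\MK BCP{P'}$ in $\CAA$ with (suppressed) structure isomorphisms $f{:}~P\otimes_C P'\congTo B$ and $g{:}~P'\otB P\congTo C$, a complete Morita context in $\C$: two algebras, two bimodules, two bimodule isomorphisms, and the two commuting diagrams \erf{moritadiagramme}. The first three ingredients are already available. By Proposition \ref{algebratransport}(i) and (v) the objects $\dot B$ and $\dot C$ are symmetric special Frobenius algebras in $\C$, and by Proposition \ref{algebratransport}(iv) the object $\dot P$ carries a $\dot B$-$\dot C$-bimodule structure while $\dot P'$ carries a $\dot C$-$\dot B$-bimodule structure. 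Thus only the two isomorphisms and the coherence diagrams remain to be produced.

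For the isomorphisms the plan is to combine two inputs. First, by the morphism part of Proposition \ref{algebratransport}(iv) the bimodule isomorphisms $f$ and $g$ remain bimodule isomorphisms after transport, i.e.\ $f{:}~\dot{(P\otimes_C P')}\congTo\dot B$ is an isomorphism of $\dot B$-bimodules in $\C$ and $g{:}~\dot{(P'\otB P)}\congTo\dot C$ one of $\dot C$-bimodules. Second, since $C$ and $B$ are symmetric special Frobenius, Lemma \ref{tensortransport} supplies comparison isomorphisms $\phi{:}~\dot{(P\otimes_C P')}\congTo\dot P\otimes_{\dot C}\dot P'$ of $\dot B$-bimodules (tensoring over $C$) and $\psi{:}~\dot{(P'\otB P)}\congTo\dot P'\otimes_{\dot B}\dot P$ of $\dot C$-bimodules (tensoring over $B$). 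Setting $\dot f:=f\cir\phi^{-1}$ and $\dot g:=g\cir\psi^{-1}$ then yields isomorphisms $\dot f{:}~\dot P\otimes_{\dot C}\dot P'\congTo\dot B$ and $\dot g{:}~\dot P'\otimes_{\dot B}\dot P\congTo\dot C$ of $\dot B$- and $\dot C$-bimodules, respectively.

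It then remains to verify the two diagrams \erf{moritadiagramme} for the quadruple $(\dot B,\dot C,\dot P,\dot P')$. The plan is to recognise each $\C$-diagram as the image of the corresponding (commuting) $\CAA$-diagram, once every tensor product $\dot{(X\otimes_E Y)}$ is traded for $\dot X\otimes_{\dot E}\dot Y$ via the comparison isomorphisms of Lemma \ref{tensortransport}. First I would check that this family of comparison isomorphisms is natural in its bimodule arguments — immediate from the construction through the embedding and restriction morphisms $e_{M,N}$, $r_{M,N}$ — and, crucially, that it intertwines the associativity constraint $\alpha$ of $\CAA$ with the tensor-product associator of the bimodule category of $\C$, together with the two unit constraints. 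Granting this coherence, the transported diagram is obtained by conjugating every arrow of the original one with comparison isomorphisms: $\dot f$ corresponds to $f$, $\dot g$ to $g$, the $\C$-associator to $\alpha$, and the unit isomorphisms to their $\CAA$-counterparts, so that commutativity in $\C$ is forced by commutativity in $\CAA$.

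I expect the associativity compatibility to be the main obstacle. It is a hexagon-type statement relating the triple products $(P\otimes_C P')\otB P$ and $P\otimes_C(P'\otB P)$, and proving it amounts to a computation with the idempotents $P^{B}_{M,N}$ and $P^{C}_{M,N}$ of the iterated tensor products together with their $\C$-level counterparts. Here I would lean on the explicit relations produced in the proof of Lemma \ref{tensortransport}, namely $f\cir r^{B}_{M,N}\cir r_{M,N}=r^{\dot B}_{\dot M,\dot N}$ and $e_{M,N}\cir e^{B}_{M,N}=e^{\dot B}_{\dot M,\dot N}\cir f$, which allow one to push the comparison isomorphisms through the associators by absorbing and re-expanding the relevant projectors, exactly as in the projector calculus of the appendix. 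Once this coherence is established both squares close, and the specialness and symmetry transported by Proposition \ref{algebratransport}(v) keep all data within the class of symmetric special Frobenius algebras, completing the Morita context $\MK{\dot B}{\dot C}{\dot P}{\dot P'}$ in $\C$.
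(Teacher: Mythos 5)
Your proposal is correct and follows essentially the same route as the paper: the paper's proof of Corollary \ref{Moritatransport} simply invokes Lemma \ref{tensortransport} (exactly your comparison isomorphisms $\phi$, $\psi$, built from the factorisations $f\cir r^{B}_{M,N}\cir r_{M,N}=r^{\dot B}_{\dot M,\dot N}$ and $e_{M,N}\cir e^{B}_{M,N}=e^{\dot B}_{\dot M,\dot N}\cir f$) together with Proposition \ref{algebratransport}, and asserts that commutativity of the diagrams \erf{moritadiagramme} in $\C$ follows from commutativity of their counterparts in $\CAA$. Your write-up merely makes explicit the coherence checks (naturality of the comparison isomorphisms and their compatibility with the associativity and unit constraints, verified by the projector calculus of the appendix) that the paper leaves implicit.
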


\begin{proof}
Follows from lemma \ref{tensortransport} above. The commutativity of the diagrams 
\erf{moritadiagramme} in $\C$ follows from the commutativity of their counterparts in $\CAA$.
\end{proof}

We are now in a position to generalise the results of the previous section to the case
of algebras in arbitrary Morita classes. 

\begin{prop}
Let $H\,{\le}\,\Pic(\CAA)$ be a finite subgroup of the Picard group of 
$\,\CAA$ and assume that $\,\dim_A(\bigoplus_{h\in H}L_h) \,{\neq}\, 0$ for 
representatives $L_h$\ of $H$\ in $\CAA$. Then there exists an algebra 
$A^\prime$ in $\C$ and a Morita context $\MK{A}{A^\prime}{P}{P^\prime}$ in $\C$ 
such that $\Pi_{P^\prime,P}$ maps $H$ into the image of $\Psi_{\!A^\prime}$ in 
$\Pic(\C_{A^\prime|A^\prime})$. In other words, for any $h\iN H$ there is an 
algebra automorphism $\beta_h$ of $A^\prime$ such that the twisted bimodule  
${}_{\id} {A^\prime}_{\!\beta_h}$ is isomorphic to $(P^\prime\otA L_h)\otA P$.
\end{prop}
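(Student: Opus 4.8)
The plan is to apply the results of section~\ref{sec:1-class}---established there for algebras Morita equivalent to the tensor unit---to the bimodule category $\CAA$ in place of $\C$, and then to transport the outcome back to $\C$ via the assignment $X\mapsto\dot X$ studied in proposition~\ref{algebratransport}. First I would pass to the strictification $\D$ of $\CAA$ and record, as already noted above, that $\D$ satisfies convention~\ref{convention1} and is in addition skeletally small and sovereign; its tensor unit is the regular bimodule $A$, which by convention~\ref{convention2} is simple and absolutely simple, and its Picard group is $\Pic(\CAA)$. Writing $Q:=\bigoplus_{h\in H}L_h$ as an object of $\CAA$, the hypothesis $\dim_A(Q)\,{\neq}\,0$ is exactly the statement $\dim(Q)\,{\neq}\,0$ in $\D$; by lemma~\ref{charakter}(ii) it forces $\dimLR(L_h)\eq1$ for all $h\iN H$, and by remark~\ref{qselbstdual} it gives $Q\,{\cong}\,Q^\vee$.

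Next I would invoke lemma~\ref{moritacontext} and proposition~\ref{mengenschnitt} \emph{inside} $\D$. This produces the symmetric special Frobenius algebra $B:=Q\otA Q^\vee$ in $\CAA$, a Morita context $\MK{A}{B}{Q^\vee}{Q}$ in $\CAA$ (the role of $\Eins$ in proposition~\ref{mengenschnitt} now being played by the unit $A$ of $\CAA$), and, for each $h\iN H$, an algebra automorphism $\alpha_h$ of $B$ with ${}_{\id}B_{\alpha_h}\,{\cong}\,Q\otA L_h\otA Q^\vee\eq\Pi_{Q,Q^\vee}([L_h])$ as $B$-bimodules in $\CAA$. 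All of this is immediate from section~\ref{sec:1-class} and lives entirely in the auxiliary category.

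The transport is where I would spend the most care. I would set $A':=\dot B$, which is a symmetric special Frobenius algebra in $\C$ by proposition~\ref{algebratransport}(v). One checks that the unit of $\CAA$ transports to the original algebra, $\dot A\eq(A,m,\eta)$, so that corollary~\ref{Moritatransport} turns the context above into a Morita context $\MK{A}{A'}{P}{P'}$ in $\C$ with $P\eq\dot{Q^\vee}$ and $P'\eq\dot Q$. Since each $\alpha_h$ is an algebra morphism in $\CAA$, proposition~\ref{algebratransport}(iii) makes $\beta_h:=\dot\alpha_h$ an algebra endomorphism of $A'$ in $\C$; as $\dot{(-)}$ acts as the identity on underlying morphisms it preserves composition, so $\dot{\alpha_h^{-1}}$ inverts $\beta_h$ and $\beta_h\iN\Aut(A')$.

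Finally I would transport the bimodule isomorphism. On the left, proposition~\ref{algebratransport}(iv) sends ${}_{\id}B_{\alpha_h}$ to the $A'$-bimodule with left action $m_B\cir r_{B,B}\eq m_{A'}$ and right action $m_B\cir(\id_B\otA\alpha_h)\cir r_{B,B}\eq m_{A'}\cir(\id_{A'}\oti\beta_h)$, the last equality by naturality of $r$; this is precisely ${}_{\id}{A'}_{\beta_h}$. On the right, the transport carries the monoidal product $\otA$ of $\CAA$ together with the module actions supplied by the Morita factors to their counterparts over $A$ in $\C$, so that $Q\otA L_h\otA Q^\vee$ becomes $P'\otA L_h\otA P\eq\Pi_{P',P}([L_h])$; this compatibility of $\dot{(-)}$ with $\Pi$ is the substance of lemma~\ref{tensortransport} underlying corollary~\ref{Moritatransport}. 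Transporting the isomorphism itself (proposition~\ref{algebratransport}(iv)) then gives ${}_{\id}{A'}_{\beta_h}\,{\cong}\,\Pi_{P',P}([L_h])$, i.e.\ $\Pi_{P',P}([L_h])\eq\Psi_{\!A'}(\beta_h)\iN\im(\Psi_{\!A'})$ for every $h\iN H$, which is the claim. The only real obstacle is bookkeeping: checking that $\dot{(-)}$ intertwines the twisted-bimodule construction, the map $\Pi$, and the identification of the $A$-end of the Morita context with the original algebra---all resting on proposition~\ref{algebratransport}, lemma~\ref{tensortransport} and corollary~\ref{Moritatransport} rather than on fresh computation.
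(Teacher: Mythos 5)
Your proposal is correct and follows essentially the same route as the paper's own proof: apply proposition \ref{mengenschnitt} to the tensor unit of the strictification $\D$ of $\CAA$ to obtain $B$, the Morita context and the automorphisms, then transport everything to $\C$ via proposition \ref{algebratransport}, lemma \ref{tensortransport} and corollary \ref{Moritatransport}. The only difference is that you spell out details the paper leaves implicit (that the twisted right action transports to $m_{A'}\cir(\id_{A'}\oti\beta_h)$, that $\beta_h$ remains invertible, and that the unit of $\CAA$ transports back to $A$), which is careful but not a deviation in method.
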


\begin{proof}
Applying proposition \ref{mengenschnitt} to the tensor unit of $\D$
yields, by the equivalence $\D\,{\simeq}\, \CAA$, a symmetric special Frobenius 
algebra $B$ in $\CAA$ and a Morita context $\MK ABQ{Q^\prime}$ in $\CAA$, such 
that there are automorphisms $\beta_h$ of $B$ and $B$-bimodule isomorphisms 
$F_h{:}~(Q^\prime\otA L_h)\otA Q\longcongTo {}_{\id}B_{\beta_h}$ for all 
$h\iN H$. By proposition \ref{algebratransport} and corollary \ref{Moritatransport} 
this gives rise to a Morita context $\MK{\dot A}{\dot B}{P}{P^\prime}$ in $\C$, 
where $P\eq\dot Q$ and $P^\prime\eq\dot Q^\prime$. The morphisms $F_h$ 
remain isomorphisms of bimodules when transported to $\C$, see proposition 
\ref{algebratransport}.
\\[3pt]
It follows that $(P^\prime\otimesA L_h)\otimesA\,P\cong {}_{\id} \dot B_{\beta_h}$ 
as $\dot B$-bimodules in $\C$.
\end{proof}

Since the algebra $\dot B$ might have a larger automorphism group in $\C$, its 
image under $\Psi_{\dot B}$ might be larger in $\Pic(\C_{\dot B|\dot B})$. So we 
cannot conclude that $H\,{\cong}\, \im(\Psi_{\dot B})$ in this case. But as we have 
$\Aut_{A|A}(B)\,{\le}\,\Aut_\C(\dot B)$ as subgroups, we can still generalise 
theorem \ref{gruppenschnitt}. This furnishes the main result of this paper:

\begin{thm}\label{thm:main}
Let $\C$ be a skeletally small sovereign abelian monoidal category with simple 
and absolutely simple tensor unit that is enriched over $\vect_\Bbbk$, with $\Bbbk$
a field of characteristic zero. Let $A$ be a simple and absolutely simple
symmetric special Frobenius algebra in $\C$, and let $\psi$ be a normalised 
three-co\-cycle describing the associator of the Picard category of $\,\CAA$. Let 
$H$ be an admissible subgroup of $\,\Pic(\CAA)$ (cf.\ definition \ref{admissible}).
\\[3pt]
Then there exist a symmetric special Frobenius algebra $A^\prime$ in $\C$ and 
a Morita context $\MK A{A^\prime}{P}{P^\prime}$ such that for each trivialisation 
$\omega$ of $\psi$ on $H$ (cf.\ definition \ref{trivialisation}) the following holds.
\def\leftmargini{1.8em}
\begin{itemize}
\item[\rm (i)]
There is an injective homomorphism $\alpha_\omega{:}~ H\To\Aut(A^\prime)$ such 
that $\,\Pi_{P,P^\prime}\cir\Psi_{\!A^\prime}\cir \alpha_\omega \eq \id_H$.
The assignment $\,\omega\,{\mapsto}\,\alpha_\omega$ is injective.
\item[\rm (ii)]
The fixed algebra of $\,\im(\alpha_\omega) \,\,{\le}\, \Aut(A^\prime)$ is isomorphic 
to $\dot Q(H,\omega)$, where $\dot Q(H,\omega)$ is the algebra $Q(H,\omega)$ in $\CAA$ 
as described in proposition \ref{trivialisierungsalgebra}, transported to $\C$.
\end{itemize}
\end{thm}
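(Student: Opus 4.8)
The plan is to deduce everything from the results of Section \ref{sec:1-class} by passing to the strictification $\D$ of the bimodule category $\CAA$, in which the tensor unit is $A$ regarded as the unit $A$-bimodule. As recorded above, $\D$ satisfies Convention \ref{convention1} and is in addition skeletally small and sovereign, so the entire machinery of Section \ref{sec:1-class} applies to it. Under the equivalence $\CAA\simeq\D$ the subgroup $H\le\Pic(\CAA)$ becomes a finite subgroup of $\pic D$, the cocycle $\psi$ and its trivialisations are unchanged, and admissibility is preserved; in particular $\dimLR(L_h)\eq1$ for all $h\iN H$, so Lemma \ref{charakter} gives $\dim(Q)\eq|H|\,{\neq}\,0$ for $Q\eq Q(H)\eq\bigoplus_{h\in H}L_h$.

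First I would fix the data common to both parts. Applying Proposition \ref{mengenschnitt} in $\D$ (whose algebra is symmetric special Frobenius by Lemma \ref{moritacontext}) produces the algebra $B:=Q\otimes_A Q^\vee$ in $\CAA$ together with the Morita context $\MK A B{Q^\vee}Q$; these depend only on $H$. Transporting to $\C$ by Proposition \ref{algebratransport}(v) and Corollary \ref{Moritatransport} yields a symmetric special Frobenius algebra $A'\eq\dot B$ and a Morita context $\MK A{A'}P{P'}$ in $\C$ with $P\eq\dot{Q^\vee}$ and $P'\eq\dot Q$, where $A$ itself is recovered as the transport of the unit $A$-bimodule. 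This is precisely the output of the preceding proposition, and it supplies the $\omega$-independent data $A'$, $P$, $P'$ of the theorem.

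For part (i), I would invoke Theorem \ref{gruppenschnitt} in $\D$: each trivialisation $\omega$ of $\psi$ on $H$ gives a group homomorphism $\alpha{:}~H\To\Aut_{A|A}(B)$ with $\Pi_{Q^\vee,Q}\cir\Psi_B\cir\alpha\eq\id_H$. By Proposition \ref{algebratransport}(iii) each $\alpha(h)$ is also an algebra automorphism of $A'\eq\dot B$ in $\C$, so composition with the inclusion $\Aut_{A|A}(B)\,{\le}\,\Aut_\C(\dot B)$ produces a homomorphism $\alpha_\omega{:}~H\To\Aut_\C(A')$. To obtain $\Pi_{P,P'}\cir\Psi_{A'}\cir\alpha_\omega\eq\id_H$ I would transport the corresponding $\D$-identity: Lemma \ref{tensortransport} identifies $\otimes_B$ in $\CAA$ with $\otimes_{\dot B}$ in $\C$, so the twisted bimodule ${}_{\id}B_{\alpha(h)}$ and the tensor products defining $\Pi_{Q^\vee,Q}$ carry over to ${}_{\id}{A'}_{\alpha_\omega(h)}$ and to $\Pi_{P,P'}$. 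Injectivity of $\alpha_\omega$ is then automatic from this right-inverse relation, and injectivity of $\omega\,{\mapsto}\,\alpha_\omega$ follows from the bijectivity in Theorem \ref{gruppenschnitt} together with faithfulness of the forgetful functor $\CAA\To\C$.

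For part (ii), Theorem \ref{fixalgebrabestimmung} identifies the fixed algebra of $\im(\alpha)$ in $\CAA$ with $Q(H,\omega)$. The fixed algebra of $\im(\alpha_\omega)$ in $\C$ is, by definition, the image of the averaging idempotent ${|H|}^{-1}\sum_{h\in H}\alpha_\omega(h)$; since this is literally the same morphism as its $\CAA$-counterpart viewed under the forgetful functor, and since a monic/epi splitting of an idempotent in $\CAA$ remains one in $\C$, the image is $\dot Q(H,\omega)$. I expect this last compatibility to be the main obstacle: because the passage $\CAA\To\C$ is only a faithful transport functor and not an equivalence, one cannot simply ``apply the equivalence'', but must verify explicitly --- using Proposition \ref{algebratransport} and Lemma \ref{tensortransport} --- that forming fixed algebras, twisted bimodules, and the Picard isomorphism $\Pi$ all commute with the dotting operation. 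Once that is in place, (i) and (ii) are read off from the $\D$-versions of Theorems \ref{gruppenschnitt} and \ref{fixalgebrabestimmung}.
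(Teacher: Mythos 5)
Your proposal is correct and takes essentially the same route as the paper's proof: strictify $\CAA$, apply Proposition \ref{mengenschnitt}, Theorem \ref{gruppenschnitt} and Theorem \ref{fixalgebrabestimmung} there, and transport the algebra, the Morita context, the automorphisms and the Picard identification to $\C$ via Proposition \ref{algebratransport}, Lemma \ref{tensortransport} and Corollary \ref{Moritatransport}, with injectivity of $\omega\mapsto\alpha_\omega$ read off from the bijection in Theorem \ref{gruppenschnitt} and the inclusion $\Aut_{A|A}(B)\subseteq\Aut_\C(\dot B)$. Your final step for (ii) matches the paper's: it verifies the universal property of the fixed algebra in $\C$ directly using the morphisms $i,s$ with $s\cir i \eq \id$ and $i\cir s \eq {|H|}^{-1}\sum_{h\in H}\beta_h$, which is precisely your observation that the $\CAA$-splitting of the averaging idempotent remains a splitting in $\C$.
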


\begin{proof}
(i)\,
Denote again by $\D$ the strictification of the bimodule category $\CAA$. By 
propositions \ref{mengenschnitt} and \ref{trivialisierungsalgebra} and theorem 
\ref{gruppenschnitt} we find a symmetric special Frobenius algebra $B$ in 
$\CAA$ and a Morita context $\MK ABQ{Q^\prime}$ in $\CAA$ such that there is a 
homomorphism $\phi_\omega{:}~H\To \Aut_{A|A}(B)$ with $\Pi_{Q,Q^\prime}\cir\Psi_{\!B}{:} 
\Aut_{A|A}(B)\To \Pic(\CAA)$ as one-sided inverse. 
According to proposition \ref{algebratransport}, $\dot B$ is a symmetric special 
Frobenius algebra in $\C$ and $\MK{A}{\dot B}{P}{P^\prime}$ is a Morita context 
in $\C$ with $P\eq\dot Q$ and $P^\prime\eq\dot Q^\prime$. Further, we can extend 
$\Psi_{\!B}$ to $\Aut_\C(\dot B)$ by putting $\Psi_{\!\dot B}(\gamma) 
\eq [{}_{\id}\dot B_\gamma]$ for $\gamma\iN\Aut_\C(\dot B)$. Since we have 
$\Aut_{A|A}(B)\,{\subseteq}\, \Aut_\C(\dot B)$ as a subgroup, $\phi_\omega$ then
gives a homomorphism $\alpha_\omega{:}~ H\To\Aut_\C(\dot B)$ that has 
$\Pi_{P,P^\prime}\cir\Psi_{\!\dot B}$ as one-sided inverse. As was seen in theorem 
\ref{gruppenschnitt}, the assignment $\omega\,{\mapsto}\,\phi_\omega$ is a bijection, 
and hence the assignment $\omega\,{\mapsto}\,\alpha_\omega$ is still injective.
\\[3pt]
(ii)\,
Put $\beta_h \,{:=}\, \alpha_\omega(h)$. From theorem \ref{fixalgebrabestimmung} 
we know that the algebra $Q(H,\omega)$ is isomorphic to the fixed algebra $B^H$ 
in $\CAA$. It comes together with
an algebra morphism $i{:}~Q(H,\omega)\To B$ and a coalgebra morphism
$s{:}~B\To Q(H,\omega)$ such that $s\cir i \eq \id_{Q(H,\omega)}$ and 
$i\cir s \eq {|H|}^{-1}\sum_{h\in H}\beta_h$. Now let $f\iN\Hom_\C(X,\dot B)$ be a 
morphism obeying $\beta_h\cir f \eq f$ for all $h\iN H$. Then for 
$\bar f \,{:=}\, s\cir f{:}~X\To \dot Q(H,\omega)$ we find 
$i\cir\bar f \eq i\cir s\cir f \eq \frac{1}{|H|}\sum_{h\in H}\beta_h\cir f \eq f$, 
i.e.\ $\dot Q(H,\omega)$ satisfies the universal property of the fixed algebra in 
$\C$. So ${\dot B}^H\,{\cong}\, \dot Q(H,\omega)$ as objects in $\C$. By proposition 
\ref{algebratransport}, $i$ is still a morphism of Frobenius algebras when 
transported to $\C$. It follows that $\dot B^H\cong \dot Q(H,\omega)$ as
Frobenius algebras in $\C$.
\end{proof}

\newpage

\appendix

\section{Appendix} \label{appA}

Here we collect some facts about the category of bimodules over an algebra
in a monoidal category. Let $(\C, \oti, \Eins)$ be a an abelian sovereign 
strict monoidal category, enriched over $\vectk$ with $\Bbbk$ a
field of characteristic zero, and with simple and absolutely simple
tensor unit. Let $A$ be an algebra in $\C$.

The tensor product $X\otA Y$ of two $A$-bimodules
$X \,{\equiv}\, (X,\rho_X,\varrho_X)$ and $Y \,{\equiv}\, (Y,\rho_Y,\varrho_Y)$ 
is defined to be the cokernel of the morphism
$(\varrho_X \oti \id_Y - \id_X \oti \rho_Y) \iN\Hom(X\oti A\oti Y,X\oti Y)$.
In the following we will only deal with tensor products over symmetric special 
Frobenius algebras. In this case the notion of tensor product can equivalently 
be described as follows. Let $(A,m,\eta,\Delta,\epsilon)$ be a symmetric 
normalised special Frobenius algebra. Consider the morphism
  \be
P_{X,Y} ~:=~~
\raisebox{-21pt}{
\bp(50,57)
\bild{pic32}
\put(-39,-10){
\put(36,64){\scriptsize $Y$}
\put(34.5,1){\scriptsize $Y$}
\put(2.5,64){\scriptsize $X$}
\put(2,1){\scriptsize $X$}
}
\ep
}\in~\Hom_{A|A}(X\oti Y, X\oti Y) \,.
  \ee
~\\
Since $A$ is symmetric special Frobenius, $P_{X,Y}$ is an idempotent.
Writing $P_{X,Y} \eq e_{X,Y}\cir r_{X,Y}$ as a composition of a monic 
$e_{X,Y}$ and an epi $r_{X,Y}$, one can check that the morphism
$r_{X,Y}$ satisfies the universal property of the cokernel of 
$(\varrho_X \oti \id_Y - \id_X \oti \rho_Y)$. 

The object $X\otA Y$ is in the bimodule category $\CAA$ again. 
Indeed, right and left actions of $A$ on $X\otA Y$ can be defined by
  \be
\raisebox{-32pt}{
\bp(67,78)
  \put(0,11){
\bild{pic34a}
\put(-40,-10){
\put(9,76){\scriptsize $X\otimesA Y$}
\put(6,2){\scriptsize $X\otimesA Y$}
\put(35,2){\scriptsize $A$}
\put(-10,56){\scriptsize $r_{X,Y}^{}$}
\put(-11,23){\scriptsize $e_{X,Y}^{}$}
}
  }
\ep
} \text{and}\qquad\quad
\raisebox{-32pt}{
\bp(40,65)
  \put(0,11){
\bild{pic34b}
\put(-40,-10){
\put(17,76){\scriptsize $X\otimesA Y$}
\put(18,2){\scriptsize $X\otimesA Y$}
\put(5,2){\scriptsize $A$}
\put(41,55){\scriptsize $r_{X,Y}^{}$}
\put(41,23){\scriptsize $e_{X,Y}^{}$}
}
  }
\ep
}
  \ee
The tensor product over $A$ of two morphisms
$f\iN \Hom_{A|A}(X,X^\prime)$ and $g\iN\Hom_{A|A}(Y,Y^\prime)$ is defined as
  \be
  f\otA g := r_{X^\prime,Y^\prime}\cir(f\oti g)\cir e_{X,Y} \,.
  \ee
So in particular the morphisms $e_{X,Y}$, $r_{X,Y}$ and $f\otA g$ are morphisms 
of $A$-bimodules. 

For any three $A$-bimodules $X$, $Y$, $Z$ one defines
  \be\label{bimod-associator}
\alpha_{X,Y,Z} ~:=~~\qquad
\raisebox{-57pt}{
\bp(70,122)
  \put(0,12){
\bild{pic36}
\put(-40,-10){
\put(-3,114){\scriptsize $X\otimesA (Y\otimesA Z)$}
\put(-1,1){\scriptsize $(X\otimesA Y)\otimesA Z$}
\put(34,83){\scriptsize $r_{\!X,Y\otimesA Z}^{}$}
\put(-15,46){\scriptsize $e_{X,Y}^{}$}
\put(41,65){\scriptsize $r_{\!Y,Z}^{}$}
\put(-23,26){\scriptsize $e_{X\otimesA Z,Y}^{}$}
}
  }
\ep
}\in~\Hom_{A|A}\!\big((X\otA Y)\otA Z, X\otA(Y\otA Z)\big) \,,
  \ee
which is a morphism of $A$-bimodules.
These morphisms are in fact isomorphisms and have the following properties:
  \be\label{assoziatoreigenschaft}
\raisebox{-37pt}{
\bp(84,81)
  \put(0,7){
\bild{pic37a}
\put(-40,-10){
\put(-3,78){\scriptsize $X\otimesA (Y\otimesA Z)$}
\put(0,2){\scriptsize $X$}
\put(14.7,2){\scriptsize $Y$}
\put(29,2){\scriptsize $Z$}
\put(40,46){\scriptsize $r_{\!X\otimesA Y, Z}^{}$}
\put(29,61.3){\scriptsize $\alpha_{X,Y,Z}^{}$}
\put(-18.6,28){\scriptsize $r_{\!X,Y}^{}$}
}
  }
\ep
}=~
\raisebox{-37pt}{
\bp(90,55)
  \put(0,7){
\bild{pic37b}
\put(-40,-10){
\put(-7,78){\scriptsize $X\otimesA (Y\otimesA Z)$}
\put(2,2){\scriptsize $X$}
\put(18,2){\scriptsize $Y$}
\put(31,2){\scriptsize $Z$}
\put(35,45){\scriptsize $r_{\!X,Y\otimesA Z}^{}$}
\put(40.6,28){\scriptsize $r_{Y,Z}^{}$}
}
  }
\ep
} \text{and}\qquad
\raisebox{-37pt}{
\bp(83,55)
  \put(0,7){
\bild{pic37c}
\put(-40,-10){
\put(-9,2){\scriptsize $(X\otimesA Y)\otimesA Z$}
\put(1,78){\scriptsize $X$}
\put(17,78){\scriptsize $Y$}
\put(32,78){\scriptsize $Z$}
\put(33,34){\scriptsize $e_{X,Y\otimesA Z}^{}$}
\put(23,18.3){\scriptsize $\alpha_{X,Y,Z}^{}$}
\put(40,52){\scriptsize $e_{Y,Z}^{}$}
}
  }
\ep
}=\qquad
\raisebox{-37pt}{
\bp(80,55)
  \put(0,7){
\bild{pic37d}
\put(-40,-10){
\put(-5,2){\scriptsize $(X\otimesA Y)\otimesA Z$}
\put(1,78){\scriptsize $X$}
\put(17,78){\scriptsize $Y$}
\put(30,78){\scriptsize $Z$}
\put(40,35){\scriptsize $e_{X\otimesA Y,Z}^{}$}
\put(-19,53){\scriptsize $e_{X,Y}^{}$}
}
  }
\ep }
  \ee
This can be seen by writing out $\alpha_{X,Y,Z}$ and letting the 
occurring idempotents disappear using the properties of $A$ as a 
symmetric special Frobenius algebra.
  
An easy, albeit lengthy, calculation then shows that the isomorphisms 
$\alpha_{X,Y,Z}$ obey the pentagon condition for the associativity 
constraints in a monoidal category, i.e.\ one has
  \be
  (\id_U\OtA\,\alpha_{V,W,X}^{})\cir \alpha_{U,V\otimesA W,X}^{}
  \cir (\alpha_{U,V,W}^{}\,\OtA\id_X )
  = \alpha_{U,V,W\otimesA X}^{}\cir\alpha_{U\otimesA V, W,X}^{}
  \ee
for any quadruple $U$, $V$, $W$, $X$ of $A$-bimodules.
For an $A$-bi\-module $M$, unit constraints are given by
  \be \label{Bild38}
\rho^A(M)~=~~
\raisebox{-40pt}{
\bp(70,73)
  \put(0,8){
\bild{pic38a}
\put(-40,-10){
\put(18.7,76){\scriptsize $M$}
\put(15,2){\scriptsize $M\otimesA A$}
\put(39.5,26){\scriptsize $e_{M,A}^{}$}
}
  }
\ep
} \text{and}\qquad \lambda^A(M)~=~~
\raisebox{-40pt}{
\bp(40,65)
  \put(0,8){
\bild{pic38b}
\put(-40,-10){
\put(31,76){\scriptsize $M$}
\put(15,2){\scriptsize $A\otimesA M$}
\put(39.5,26){\scriptsize $e_{A,M}^{}$}
}
  }
\ep }
  \ee
with inverses
  \be\label{Bild39}
\rho^A(M)^{-1}~=~~
\raisebox{-40pt}{
\bp(70,95)
  \put(0,8){
\bild{pic39a}
\put(-40,-10){
\put(6,2){\scriptsize $M$}
\put(10,99){\scriptsize $M\otimesA A$}
\put(41,67){\scriptsize $r_{M,A}^{}$}
}
  }
\ep
} \text{and}\qquad \lambda^A(M)^{-1}~=~~
\raisebox{-40pt}{
\bp(50,98)
  \put(0,8){
\bild{pic39b}
\put(-40,-10){
\put(32,2){\scriptsize $M$}
\put(13,99){\scriptsize $A\otimesA M$}
\put(41.3,66){\scriptsize $r_{A,M}^{}$}
}
  }
\ep }
  \ee
This turns the category $\CAA$ into a (non-strict) monoidal category. 
We will now see that $\CAA$ is sovereign. Let $M$ be an $A$-bimodule and 
$M^\vee$ its dual as an object of $\C$. $M^\vee$ becomes an $A$-bimodule by
defining left and right actions of $A$ as
  \be
\raisebox{-32pt}{
\bp(29,73)
\bild{pic40a}
\put(-40,-10){
\put(34,2){\scriptsize $M^\vee$}
\put(32,81){\scriptsize $M^\vee$}
\put(22,2){\scriptsize $A$}
}
\ep
}:=~~
\raisebox{-32pt}{
\bp(100,60)
\bild{pic40b}
\put(-40,-10){
\put(32,2){\scriptsize $M^\vee$}
\put(-23,81){\scriptsize $M^\vee$}
\put(14,2){\scriptsize $A$}
}
\ep
} \text{and}\qquad\qquad
\raisebox{-38pt}{
\bp(25,63)
  \put(0,6){
\bild{pic40c}
\put(-40,-10){
\put(20,2){\scriptsize $M^\vee$}
\put(25,81){\scriptsize $M^\vee$}
\put(37,2){\scriptsize $A$}
}
  }
\ep
}:=~~
\raisebox{-38pt}{
\bp(70,60)
  \put(0,6){
\bild{pic40d}
\put(-40,-10){
\put(-25,2){\scriptsize $M^\vee$}
\put(33,81){\scriptsize $M^\vee$}
\put(-1,2){\scriptsize $A$}
}
  }
\ep }
  \ee
The structural morphisms of left and right dualities for $\CAA$ are given by
  \begin{eqnarray}
b_M^A~=\quad
\raisebox{-32pt}{
\bp(65,69)
\bild{pic41a}
\put(-40,-10){
\put(13,78){\scriptsize $M\otimesA M^\vee$}
\put(6,2){\scriptsize $A$}
\put(39,52){\scriptsize $r_{\!M,M^\vee}^{}$}
}
\ep
}, \qquad\quad~~
&&
d_M^A~=\qquad
\raisebox{-32pt}{
\bp(60,60)
\bild{pic41b}
\put(-40,-10){
\put(-14,2){\scriptsize $M^\vee\otimesA M$}
\put(-32,32){\scriptsize $e_{M^\vee,M}^{}$}
\put(36,78.5){\scriptsize $A$}
}
\ep
}
\nonumber\\
\tilde d_M^A~=\quad
\raisebox{-40pt}{
\bp(83,100)
  \put(0,8){
\bild{pic41c}
\put(-40,-10){
\put(15,2){\scriptsize $M\otimesA M^\vee$}
\put(39,33){\scriptsize $e_{M,M^\vee}^{}$}
\put(-11,78.5){\scriptsize $A$}
  }
}
\ep
}, \qquad
&&
\tilde b_M^A~=\quad
\raisebox{-40pt}{
\bp(40,60)
  \put(0,8){
\bild{pic41d}
\put(-40,-10){
\put(3,78){\scriptsize $M^\vee\otimesA M$}
\put(33,52){\scriptsize $r_{\!M^\vee,M}^{}$}
\put(34,2){\scriptsize $A$}
}
  }
\ep
}
\end{eqnarray}
One checks that this indeed furnishes dualities on $\CAA$, and that they 
coincide with those of $\C$ not only on objects, but also on morphisms. Thus 
if $\C$ is sovereign, then so $\CAA$. One also easily verifies that $\CAA$ is 
abelian and that its morphism groups are $\Bbbk$-vector spaces with 
$\Hom_{A|A}(M,N)\,{\subset}\,\Hom_\C(M,N)$ as subspaces, and hence 
$\dimf\Hom_{A|A}(M,N)\,{\le}\,\dimf\Hom_\C(M,N)$.
For $M$ an $A$-bimodule, we denote its left and right dimension as an object 
of $\CAA$ by ${\dimL}_A(M)$ and ${\dimR}_A(M)$, respectively. 
If $A$ is in addition an absolutely simple algebra, one has
  \be
  \dimR(M) = \dimR(A)\,{\dimR}_A(M)  \qquad{\rm and}\qquad
  \dimL(M) = \dimL(A)\,{\dimL}_A(M) \,.
  \ee

\vskip 5.5em


 \newcommand\wb{\,\linebreak[0]} \def\wB {$\,$\wb}
 \newcommand\Bi[2]    {\bibitem[#2]{#1}}
 \newcommand\JO[6]    {{\em #6}, {#1} {#2} ({#3}), {#4--#5} }
 \newcommand\J[7]     {{\em #7}, {#1} {#2} ({#3}), {#4--#5} {{\tt [#6]}}}
 \newcommand\BOOK[4]  {{\em #1\/} ({#2}, {#3} {#4})}

 \def\adma  {Adv.\wb in Math.}
 \def\coma  {Con\-temp.\wb Math.}
 \def\comp  {Com\-mun.\wb Math.\wb Phys.}
 \def\fic   {Fields\wb Inst.\wb Commun.}
 \def\joal  {J.\wB Al\-ge\-bra}
 \def\jpaa  {J.\wB Pure\wB Appl.\wb Alg.}
 \def\nupb  {Nucl.\wb Phys.\ B} 
 \def\phrl  {Phys.\wb Rev.\wb Lett.}
 \def\pajm  {Pa\-cific\wB J.\wb Math.} 
 \def\pmd   {Publ.\wb Math.\wb Debrecen}
 \def\slnm  {Sprin\-ger\wB Lecture\wB Notes\wB in\wB Mathematics}
 \def\tragr {Transform.\wb Groups}

\small

\end{document}